\documentclass{amsart}

\usepackage{amsfonts, amssymb, amsmath}
\usepackage{amsthm}                               
\usepackage{setspace}                                                         
\usepackage{geometry}                                                         
\usepackage{color}                                                            
\usepackage{graphicx}                                                         
\usepackage{slashed}                                                          
\usepackage{hyperref}
\usepackage{float}
\usepackage{verbatim}
\usepackage{upgreek}
\usepackage{mathtools}
\newtheorem{theorem}{Theorem} 	      	      	                              
\newtheorem{corollary}[theorem]{Corollary}     	      	      	      	      
\newtheorem{lemma}[theorem]{Lemma}     	       	      	      	      	      
\newtheorem{proposition}[theorem]{Proposition} 	      	      	      	      
\newtheorem{definition}[theorem]{Definition} 	      	      	                
\newtheorem{problem}[theorem]{Problem} 	      	      	                      
\newtheorem{remark}[theorem]{Remark}                                          
\newcommand{\thistheoremname}{}
\newtheorem*{theorem*}{Theorem}
\newtheorem*{genericthm*}{\thistheoremname}
\newenvironment{namedthm*}[1]
  {\renewcommand{\thistheoremname}{#1}%
   \begin{genericthm*}}
  {\end{genericthm*}}


\numberwithin{equation}{section}                                              
\numberwithin{theorem}{section}                                               
\numberwithin{figure}{section}                                                

\newcommand{\mf}[1]{\mathfrak{#1}}                                            
\newcommand{\mc}[1]{\mathcal{#1}}                                             
\newcommand\numberthis{\addtocounter{equation}{1}\tag{\theequation}}

\newcommand{\N}{\mathbb{N}}                                                   
\newcommand{\R}{\mathbb{R}}                                                   
\newcommand{\Sph}{\mathbb{S}}                                                 

\newcommand{\nasla}{\slashed{\nabla}}                                         

\allowdisplaybreaks

\setcounter{tocdepth}{1}

\begin{document}

\title[Carleman estimate and interior control]{ Carleman estimate for ultrahyperbolic operators\\ and improved interior control for wave equations }

\author{Vaibhav Kumar Jena}
\address{School of Mathematical Sciences\\ Queen Mary University of London\\
London E1 4NS\\ United Kingdom}
\email{v.k.jena@qmul.ac.uk}

\begin{abstract}
In this article, we present a novel Carleman estimate for ultrahyperbolic operators, in \( \mathbb{R}^m_t \times \mathbb{R}^n_x \). Then, we use a special case of this estimate to obtain improved observability results for wave equations with time-dependent lower order terms. The key improvements are: (1) we obtain smaller observation regions when compared with  standard Carleman estimate results, and (2) we also address the case when observability is centered around a point inside the domain. Finally, as a corollary of the observability result, we obtain improved interior controllability for wave equations with same features.
\end{abstract}

\subjclass[2010]{35L05, 53C50, 93B05, 93B07, 93B27.}

\setcounter{tocdepth}{2}

\maketitle

\section{Introduction}

\subsection{Controllability}

The controllability problem for a general PDE is the following: \emph{Is it possible to drive the solution of the system from a given initial state to a final state, using a suitable control?}

There can be different choices of control, each of which gives to a rise a different type of controllability problem. For example
\begin{itemize}
\item \textit{Boundary controllability}: The control acts on the boundary of the domain. These are further divided into different cases, depending on the type of the prescribed boundary data.

\item \textit{Interior controllability}: The control acts on a subset of the domain.
\end{itemize}
In this article, we will address the interior controllability problem for wave equations.
\subsubsection{Wave equation}
Let \(T>0\) and \(n \in \N\). Also, let \(\Omega\) be a bounded and open subset of \(\R^n\). On the domain \( (-T,T) \times \Omega \) we will consider a wave equation of the following type 
\begin{equation}
\label{eq.intro_wave} \begin{cases} - \partial_{tt}^2 y + \Delta_x y - \nabla_\mc{X} y + q y = F \textbf{1}_W \qquad \hspace{0.18cm} \text{in } (-T,T) \times \Omega \text{,} \\
y(-T) =  y_0^- , \quad \partial_t y (-T)=  y_1^-, \quad \qquad \hspace{0.25cm} \text{in } \Omega, \\
 y = 0, \qquad \qquad \qquad \quad \qquad \qquad \qquad \hspace{0.50cm} \text{on } (-T,T) \times \partial \Omega,
\end{cases}
\end{equation}
where \( \mc{X} := \mc{X}(t,x) \) is a vector field, \( q:= q(t,x) \) is the potential, and \( W \) is a non-empty subset of \( (-T,T) \times \Omega \). In the above equation, \( \nabla_\mc{X} y = \mc{X}^t \partial_t y + \mc{X}^x \cdot \nabla_x y \), where \( \mc{X}^t, \mc{X}^x \) are the Cartesian components of \(\mc{X}\). The function \(F\) is known as the control function and it only acts on the subset \(W\). Then, the (exact) controllability problem for the above system can be stated as follows:
\begin{center}
\textit{Given any initial and final data \( (y_0^-, y_1^-),(y_0^+, y_1^+) \in H^1_0(\Omega) \times L^2(\Omega) \) find a control \(F \in L^2(W)\) such that the (weak) solution of \eqref{eq.intro_wave} satisfies}
\end{center}
\begin{equation}\label{eq:controlgoal}
y(T) = y_0^+, \quad \partial_t y(T) = y_1^+.
\end{equation}
We point out a couple more facts about the controllability problem for wave equations.
\begin{itemize}
\item[(a)] If \( (y_0^+, y_1^+) = (0,0)\), then this becomes the null controllability problem. Due to time reversibility of wave equations, we see that null and exact controllability are equivalent. 

\item[(b)] The finite speed of propagation of wave equation implies that the time of control \(T\) should be large enough, so that the effects of the control can reach all points in the domain. 

\end{itemize}

\begin{remark}
For the above two facts, contrast this with the heat equation, which is not time reversible. Thus, null and exact controllability are not equivalent. Also, since heat equation has infinite speed of propagation, there is no such requirement for the time of control. In general, if the heat equation is null controllable for some time \(T\), then it is null controllable for all positive time.
\end{remark}

To solve the described control problem, we will use a technique described by the work of Dolecki-Russell \cite{dolec_russe:obs_control}. This result shows that using a duality argument, controllability is equivalent to obtaining unique continuation properties for the adjoint system of \eqref{eq.intro_wave}. Also, Lions \cite{lionj:control_hum} describes the process to obtain such a control using the \emph{Hilbert Uniqueness Method (HUM)}. By \emph{HUM}, to solve the controllability problem one needs to prove an observability inequality of the type\footnote{The notation \( a \lesssim b\) means that there is some constant \(C>0\) such that \(a \leqslant C b\).}
\begin{equation}
||(\phi_0, \phi_1)||_{L^2(\Omega) \times H^{-1}(\Omega) } \lesssim ||\phi||_{L^2( W )},
\end{equation}
where \(\phi\) is the solution of the adjoint system
\begin{equation}\label{eq.intro_obs}
\begin{cases}
-\partial^2_{tt} \phi + \Delta_x \phi + \nabla_\mathcal{X} \phi + V \phi= 0 , \qquad \text{in}\ (-T,T) \times \Omega,\\
( \phi, \partial_t\phi)|_{t=-T}= ( \phi_0, \phi_1), \qquad \qquad \quad  \hspace{0.07cm}\text{in } \Omega, \\
 \phi = 0 \qquad \qquad \qquad \qquad \qquad \qquad \hspace{0.57cm} \text{on }  (-T,T) \times \partial \Omega,
\end{cases}
\end{equation}
for some potential \(V\). If the observability inequality holds, then we say that the adjoint system is observable. Henceforth, our goal will be to obtain this observability inequality.

\subsubsection{General techniques} 
There are different methods to show that the adjoint system is observable. We discuss some of them briefly here:
\begin{itemize}

\item Multiplier estimates: These are integral estimates obtained by multiplying a suitably chosen multiplier with the adjoint equation and then using integration by parts. See the articles by Ho \cite{ho:obs_wave} and Lions \cite{lionj:ctrlstab_hum} for some of the initial results using multiplier methods.

\item Carleman estimates: These are weighted estimates which can be used for showing unique continuation properties for PDEs. This has been extensively used for solving observability/controllability problem for various PDEs. Some related results for wave equations (or hyperbolic equations, in general) can be found in \cite{Carleman}, \cite{FYZ}, \cite{furs_iman:ctrl_evol}, \cite{Arick}, \cite{lasie_trigg_zhang:wave_global_uc}, \cite{tat:paper}, and \cite{Zhang}. In particular, we will adapt some techniques from the works of Fu-Yong-Zhang \cite{FYZ}, Shao \cite{Arick}, and Zhang \cite{Zhang}.

\item Microlocal analysis: These are used for obtaining unique continuation results. A major work in this direction is the article by Bardos, Lebeau, and Rauch \cite{BLR}, that describes a certain optimal property known as \emph{Geometric Control Condition (GCC)}. For \(\omega \subset \Omega\), we say that \((\omega,T)\) satisfies GCC if \emph{every ray of geometric optics enters \(\omega\) in time less than \(T\)}. Other related works are \cite{burq:control_wave}, \cite{laur_leaut:obs_unif}. These results assume that the coefficients in the PDE are time analytic. 

\end{itemize}
Out of the above methods, Carleman estimates are suitable for solving the observability problem for more general type of PDEs. They can be applied to systems which have lower order terms that are time dependent. Moreover, we do not need to assume any time analyticity for these terms. In contrast, although microlocal methods obtain optimal results in terms of the control/observation region, the coefficients can at best be assumed to be time analytic. As our goal is to obtain controllability for a wider class of PDEs, namely with \( \mc{X}, V\) varying in both time and space, we will use Carleman estimates.

\subsection{Main Results}
In this section we present our main result and also explain some of the major ideas used in the proofs. Let \(x_0 \in \R^n\), and \(\nu\) be the outward unit normal of \(\Omega\). Define \( \Gamma_+ \subset \partial \Omega\) as
\[ \Gamma_+ := \{x \in \partial \Omega\ |\ \nu.(x-x_0) > 0 \}.\]
Let \( \sigma >0 \) and define \(\omega\) as
\[ \omega := \mc{O}_\sigma (\Gamma_+) \cap \Omega, \qquad \mc{O}_\sigma (\Gamma_+) := \{ y \in \R^n : |y-x| < \sigma, \text{ for some } x \in \Gamma_+ \}. \]
The usual observation region for existing Carleman estimate results (and hence control region as well) is given by \( (-T,T) \times \omega\). However, our result improves this particular feature by restricting the observation region to a smaller set. To elaborate on this discussion, let us define the following
\begin{align*}
R := \sup_{ x \in \Omega} |x-x_0|, \qquad \mc{D} := \{ |x-x_0|^2 > t^2 \} \subset \R^{1+n} .
\end{align*}
The region \(\mc{D}\) is the exterior of the null cone centered at the point \((0,x_0)\). Then, we have the following observability result:
\begin{theorem}\label{intro_thm_obs}
Let \( T > R\), and \( x_0 \in \R^n\). Fix \( \mc{X} \in C^\infty \left( [-T,T] \times \bar{\Omega}; \R^{1+n}\right)\) and  \(V  \in C^\infty \left( [-T,T] \times \bar{\Omega}\right) \). Let \( \phi \) be solution of the adjoint system \eqref{eq.intro_obs}. Then, we have the following:
\begin{itemize}
\item If \( x_0 \notin \bar\Omega \), then there exists a constant \( C := C(\Omega, T, V, \mc{X})\) such that the following holds
\begin{equation}\label{eq.intro_thm_obs}
|| \phi_0 ||_{L^2(\Omega)}^2 + || \phi_1 ||_{H^{-1}(\Omega)}^2 \leqslant C \int_{ ((-T,T) \times \omega ) \cap \mc{D}} |\phi(t,x)|^2,
\end{equation} 
for all \( (\phi_0,\phi_1) \in L^2(\Omega) \times H^{-1}(\Omega)\).

\item If \( x_0 \in \bar{\Omega} \), then for any open set \(W\) in \((-T,T) \times \Omega\) satisfying
\[ \overline{((-T,T) \times \omega ) \cap \mc{D}} \subset W \subset (-T,T) \times \Omega, \]
there is a constant \(C := C(\Omega, T, V, \mc{X},W)\), such that the following holds
\begin{equation}\label{eq.intro_thm_obs2}
|| \phi_0 ||_{L^2(\Omega)}^2 + || \phi_1 ||_{H^{-1}(\Omega)}^2 \leqslant C \int_W |\phi(t,x)|^2,
\end{equation}
for all \( (\phi_0,\phi_1) \in L^2(\Omega) \times H^{-1}(\Omega)\).
\end{itemize}
\end{theorem}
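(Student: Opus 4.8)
The plan is to deduce \eqref{eq.intro_thm_obs}--\eqref{eq.intro_thm_obs2} from the wave-operator specialisation (\(m=1\), one time variable) of the ultrahyperbolic Carleman estimate proved earlier, following the scheme of \cite{FYZ}, \cite{Arick}, \cite{Zhang}. By the \emph{HUM}/duality argument recalled in the introduction these inequalities are equivalent to the controllability statement, so it suffices to prove them for \(\phi\) solving \eqref{eq.intro_obs}. By density and well-posedness of \eqref{eq.intro_obs} we may take \(\phi\) smooth, and by a standard regularity reduction — passing to a time-primitive of \(\phi\) and using the energy estimate for \eqref{eq.intro_obs}, as in \cite{Zhang}, \cite{FYZ} — it is enough to prove the estimate one derivative higher: for \(\phi\) with data in \(H^1_0(\Omega)\times L^2(\Omega)\), with \(|\partial_t\phi|^2+|\nabla_x\phi|^2+|\phi|^2\) replacing \(|\phi|^2\) in the observation integral. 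After this reduction the terms \(\nabla_{\mc{X}}\phi+V\phi\) are genuinely lower order.

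For the weight I would take \(\varphi=e^{\lambda f}\) with \(f\) a strictly convex increasing function of the Lorentzian ``square-distance'' \(p(t,x):=|x-x_0|^2-t^2\) to the vertex \((0,x_0)\), so that \(\mc{D}=\{p>0\}\) is exactly where \(\varphi\) is largest, the null cone \(\partial\mc{D}=\{p=0\}\) is a level set of \(\varphi\), and \(\varphi\) decreases as one leaves \(\mc{D}\). Since \(|x-x_0|\le R\) on \(\bar\Omega\), the hypothesis \(T>R\) forces \(p<0\) — hence \(\varphi\) small — on the time caps \(\{t=\pm T\}\times\bar\Omega\), while \(p\ge0\) near \(\{t=0\}\); this gap is precisely the pseudoconvexity-type separation the Carleman estimate requires. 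The estimate then reads: for smooth \(w\) vanishing at \(t=\pm T\) and on \((-T,T)\times\partial\Omega\),
\[
\tau\!\int e^{2\tau\varphi}\big(|\nabla_{t,x}w|^2+\tau^2|w|^2\big)\;\lesssim\;\int e^{2\tau\varphi}\big|{-}\partial_{tt}^2 w+\Delta_x w\big|^2\;+\;\tau\!\int_{(-T,T)\times\partial\Omega} e^{2\tau\varphi}\,(\partial_\nu\varphi)\,|\partial_\nu w|^2,
\]
and on \(\partial\Omega\setminus\Gamma_+\) one has \(\partial_\nu\varphi=2\lambda e^{\lambda f}f'(p)\,\nu\cdot(x-x_0)\le0\), the favourable sign, so that part of the boundary term is discarded.

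Now localise: pick \(\chi=\chi(x)\in C_c^\infty\) with \(\chi\equiv1\) on \(\Omega\setminus\omega\) and \(\chi\equiv0\) near \(\Gamma_+\), so \(\partial_\nu(\chi\phi)\equiv0\) near \(\Gamma_+\) (killing the remaining, unfavourable, boundary term) and \(\operatorname{supp}\nabla\chi\subset\omega\). Applying the estimate to \(w=\chi\phi\): the term \(\chi({-}\partial_{tt}^2\phi+\Delta_x\phi)=-\chi(\nabla_{\mc{X}}\phi+V\phi)\) is lower order and absorbed by the left side once \(\tau\) is large relative to \(\|\mc{X}\|_\infty,\|V\|_\infty\); the commutator \([{-}\partial_{tt}^2+\Delta_x,\chi]\phi\) is supported in \((-T,T)\times\operatorname{supp}\nabla\chi\subset(-T,T)\times\omega\), giving an error \(\lesssim\int_{(-T,T)\times\omega}e^{2\tau\varphi}(|\nabla_{t,x}\phi|^2+|\phi|^2)\). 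Assume first \(x_0\notin\bar\Omega\), so \(d_0:=\operatorname{dist}(x_0,\bar\Omega)>0\), and fix \(\delta<d_0\). Split the error over \(\mc{D}\) and \(\mc{D}^{\,c}\): on \(\mc{D}^{\,c}\) we have \(p\le0\), hence \(\varphi\le\varphi|_{p=0}\), and the energy estimate for \eqref{eq.intro_obs} bounds the \(\mc{D}^{\,c}\)-part of the error by \(e^{2\tau\varphi|_{p=0}}(\|\phi_0\|_{H^1_0}^2+\|\phi_1\|_{L^2}^2)\) up to a constant. On the other hand \(\{|t|<\delta\}\times\bar\Omega\subset\mc{D}\) with \(p\ge d_0^2-\delta^2>0\) there, so restricting the left side to this slab, comparing \(\chi^2|\nabla_{t,x}\phi|^2\) with \(|\nabla_{t,x}\phi|^2\) on \(\Omega\setminus\omega\), bounding \(\int_{\{|t|<\delta\}\times\omega}|\nabla_{t,x}\phi|^2\) directly by the surviving \(\mc{D}\cap((-T,T)\times\omega)\)-integral (which contains this set), and using \(\int_{-\delta}^{\delta}E(\phi)(t)\,dt\gtrsim\|\phi_0\|_{H^1_0}^2+\|\phi_1\|_{L^2}^2\), one bounds the left side below by \(c\,\tau\,e^{2\tau\alpha}(\|\phi_0\|_{H^1_0}^2+\|\phi_1\|_{L^2}^2)\) with \(\alpha:=\min_{\{|t|\le\delta\}\times\bar\Omega}\varphi>\varphi|_{p=0}\). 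For \(\tau\) large the \(\mc{D}^{\,c}\)-error is then absorbed on the left, and fixing such a \(\tau\) yields the \(H^1_0\times L^2\) inequality with observation over \(\mc{D}\cap((-T,T)\times\omega)\); undoing the regularity reduction gives \eqref{eq.intro_thm_obs}.

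When \(x_0\in\bar\Omega\) one has \(d_0=0\): the vertex \((0,x_0)\) lies on \(\partial\big(\mc{D}\cap((-T,T)\times\omega)\big)\cap\overline{(-T,T)\times\Omega}\), where \(p=0\) and \(\nabla_{t,x}p=0\), so \(\varphi\) is critical there and no admissible \(\delta\) exists — this is the main obstacle. I would then introduce an extra cutoff excising a small space-time ball around \((0,x_0)\): since \(W\) is open and \(\overline{((-T,T)\times\omega)\cap\mc{D}}\subset W\) contains the vertex, that ball lies in \(W\), so the new commutator error is \(\lesssim\int_W(|\nabla_{t,x}\phi|^2+|\phi|^2)\); the slab argument is run on \(\{|t|<\delta\}\times(\Omega\setminus B_r(x_0))\subset\mc{D}\) (for \(\delta<r\)), and the contribution of \(B_r(x_0)\) near \(t=0\) is recovered from finite speed of propagation together with the energy identity. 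This gives \eqref{eq.intro_thm_obs2}. Throughout, the decisive difficulty — and the reason the two cases differ — is precisely this degeneracy of the Carleman weight at the cone vertex when \(x_0\in\bar\Omega\), which forces the observation set to be thickened from \(((-T,T)\times\omega)\cap\mc{D}\) to an open \(W\) containing its closure, and one must keep \(\chi\), the slab, the vertex cutoff, the sign of \(\partial_\nu\varphi\) on \(\partial\Omega\), and the convexity of \(f(p)\) on \([-T,T]\times\bar\Omega\) mutually consistent.
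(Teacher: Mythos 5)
Your proposal is built on a genuinely different machine from the paper's: you invoke a classical wave ($m=1$) Carleman estimate with an exponential weight $\varphi=e^{\lambda f(p)}$, $p=|x-x_0|^2-t^2$, nondegenerate at the cone, and localize to $\mc{D}$ afterwards by an ``energy levels'' absorption ($\varphi$ small on $\mc{D}^c$). The paper instead uses its ultrahyperbolic Carleman estimate with $m=2$ time variables, applied to $z(t_1,t_2,x)=\int_{t_1}^{t_2}\phi(s,x)\,ds$, with a weight that \emph{vanishes} at the cone and hence localizes automatically; the price is a delicate Hardy-type monotonicity lemma (Lemma \ref{thm.f/xi_inc}) to handle the extra time integral. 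The differences matter, and two steps in your sketch do not hold up.

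First, the regularity reduction. You assert that ``passing to a time-primitive of $\phi$'' reduces the $L^2\times H^{-1}$ estimate to an $H^1_0\times L^2$ one, citing \cite{Zhang}, \cite{FYZ}. But with $\mc{X}=\mc{X}(t,x)$, $V=V(t,x)$ time-dependent, the one-variable primitive $\Phi(t,x)=\int_0^t\phi(s,x)\,ds$ satisfies
\[
-\partial_{tt}^2\Phi+\Delta_x\Phi=-\phi'(0)-\int_0^t\bigl(\nabla_{\mc{X}(s,\cdot)}\phi(s,\cdot)+V(s,\cdot)\phi(s,\cdot)\bigr)\,ds,
\]
whose right side is \emph{not} a lower-order expression in $\Phi$ because the time-dependent coefficients sit inside the $s$-integral against $\phi$, not $\Phi$. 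Turning this into an admissible source requires precisely the two-time-variable device $z(t_1,t_2,x)$ together with the monotonicity of the weight in $|t|$ (the paper's Lemma \ref{thm.f/xi_inc}), which allows one to push the $s$-integral through and bound $\int\zeta f|\square z|^2$ by $\int\zeta f(z_{t_1}^2+z_{t_2}^2+|\nabla_x z|^2)$; and then $\nabla_x z$ must be eliminated by an additional integration by parts against the wave equation (equation \eqref{eql.obs_ext_1a} in the paper). None of this is routine, and your sketch elides all of it — this is in fact the entire reason the paper proves an $(m,n)$ estimate rather than staying at $m=1$.

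Second, the interior case. Your plan is to run a single Carleman estimate centered at $x_0\in\bar\Omega$, excise a small space-time ball $B$ around the vertex $(0,x_0)$, and ``recover the contribution of $B_r(x_0)$ near $t=0$ from finite speed of propagation together with the energy identity.'' This last step does not work: from $\int_{-\delta}^{\delta}E_{\Omega\setminus B_r(x_0)}(\phi)(t)\,dt$ and conservation-type estimates you cannot conclude anything about $E_{B_r(x_0)}(\phi)(0)$ — solutions with energy concentrated in $B_r(x_0)$ on the whole slab $\{|t|<\delta\}$ (with $\delta<r$) are not excluded by finite speed of propagation, so the lower bound on $\|\phi_0\|_{L^2}^2+\|\phi_1\|_{H^{-1}}^2$ fails. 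There is also a set-theoretic issue: $\overline{((-T,T)\times\omega)\cap\mc{D}}$ contains the vertex $(0,x_0)$ only when $x_0\in\bar\omega$, so for $x_0$ deep in $\Omega$ the prescribed open set $W$ need not contain any neighbourhood of $(0,x_0)$, and the cutoff error is not supported in $W$. The paper sidesteps the degeneracy entirely by applying the Carleman estimate around \emph{two} distinct shifted vertices $P_1,P_2$ at time level $0$ (Theorem \ref{thm.obs_int}), so that $\Omega=\{r_{P_1}>3R_-/4\}\cup\{r_{P_2}>3R_-/4\}$ and the degenerate neighbourhoods of the two cone tips are disjoint and covered by each other's estimates; and it handles $x_0\in\partial\Omega$ separately by perturbing to a nearby exterior point. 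Your proposal is missing these ideas.
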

\begin{remark}
Note that, we also consider the case when the observation point \(x_0\) lies inside the domain \(\Omega\). This interior observability case has not been addressed by Carleman estimate methods before, the only exception being the work in \cite{Arick}.
\end{remark}

\begin{remark}
From the statement of the above theorem, we can see that in both the cases the observation region is better than those obtained by earlier Carleman based results. When \(x_0 \notin \bar{\Omega}\), our observation region is restricted to the exterior of the null cone. When \(x_0 \in \bar{\Omega}\), we get a neighbourhood \(W,\) but the difference between \( ((-T,T) \times \omega ) \cap \mc{D} \) and \( W \) can be made arbitrarily small.
\end{remark}

From our earlier discussion about the equivalence of observability and controllability, a corollary to the above theorem is the following controllability result. 
\begin{corollary}
Let \(T>R\), and \( x_0 \in \R^n\). Fix \( \mc{X} \in C^\infty \left( [-T,T] \times \bar{\Omega}; \R^{1+n}\right)\) and  \( q  \in C^\infty \left( [-T,T] \times \bar{\Omega}\right) \). Let \(y\) be the solution of system \eqref{eq.intro_wave}. Then, we have the following:
\begin{itemize}
\item If \( x_0 \notin \bar\Omega \), then there exists a function \( F \in L^2((-T,T) \times\omega  )\), such that the control goal \eqref{eq:controlgoal} is satisfied with the control \(F\) acting on the region \(( (-T,T) \times \omega  ) \cap \mc{D}\).

\item If \( x_0 \in \bar\Omega \), then for any open set \(W\) in \((-T,T) \times \Omega\) satisfying
\[ \overline{((-T,T) \times \omega ) \cap \mc{D}} \subset W \subset (-T,T) \times \Omega, \]
there exists a function \( F \in L^2(W)\), such that the control goal \eqref{eq:controlgoal} is satisfied with the control \(F\) acting on the region \(W\).

\end{itemize}

\end{corollary}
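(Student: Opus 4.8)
The plan is to obtain this corollary from Theorem~\ref{intro_thm_obs} by the duality principle of Dolecki--Russell together with the Hilbert Uniqueness Method; no further PDE estimate is needed, only a careful matching of function spaces and adjoints.

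\emph{Adjoint and reduction.} Write $Py := -\partial_{tt}^2 y + \Delta_x y - \nabla_\mc{X} y + q y$ for the operator in \eqref{eq.intro_wave}. A direct integration by parts over $(-T,T)\times\Omega$, using the Dirichlet condition, shows that the formal $L^2$-adjoint of $P$ is $P^{\ast}\phi = -\partial_{tt}^2\phi + \Delta_x\phi + \nabla_\mc{X}\phi + V\phi$ with $V := q + \partial_t\mc{X}^t + \nabla_x\cdot\mc{X}^x$; since $q$ and $\mc{X}$ are smooth, so is $V$, and therefore Theorem~\ref{intro_thm_obs} applies to $P^{\ast}$ with this $V$, producing the observability inequality \eqref{eq.intro_thm_obs} (resp. \eqref{eq.intro_thm_obs2}) for solutions of \eqref{eq.intro_obs}. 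By linearity and the time-reversibility recalled in fact~(a), it suffices to construct, for each $(y_0^-,y_1^-)\in H^1_0(\Omega)\times L^2(\Omega)$, a control $F$ steering it to $(0,0)$ at time $T$; the general target $(y_0^+,y_1^+)$ is then attained by adding to $y$ the uncontrolled solution of \eqref{eq.intro_wave} run backward from the terminal state $(y_0^+,y_1^+)$.

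\emph{The HUM operator.} Let $\mc{W}$ be the control set: $\mc{W}=((-T,T)\times\omega)\cap\mc{D}$ when $x_0\notin\bar\Omega$ and $\mc{W}=W$ when $x_0\in\bar\Omega$; in both cases $\mc{W}$ is a bounded open subset of $(-T,T)\times\Omega$. Given $(\phi_0,\phi_1)\in L^2(\Omega)\times H^{-1}(\Omega)$, let $\phi\in C([-T,T];L^2(\Omega))\cap C^1([-T,T];H^{-1}(\Omega))$ be the transposition solution of the homogeneous adjoint system \eqref{eq.intro_obs}, and let $y\in C([-T,T];H^1_0(\Omega))\cap C^1([-T,T];L^2(\Omega))$ be the finite-energy solution of \eqref{eq.intro_wave} with forcing $\mathbf{1}_{\mc{W}}\phi\in L^2(\mc{W})$ and zero data at $t=-T$. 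Testing the equation for $y$ against a second adjoint solution $\psi$ with data $(\psi_0,\psi_1)$ and integrating by parts gives an identity of the form
\[
\int_{\mc{W}}\phi\,\psi \;=\; \big\langle \Lambda(\phi_0,\phi_1),\,(\psi_0,\psi_1)\big\rangle,
\]
where $\Lambda(\phi_0,\phi_1)$ encodes the terminal state $(y(T),\partial_t y(T))\in H^1_0(\Omega)\times L^2(\Omega)$ and $\langle\cdot,\cdot\rangle$ is the natural (symplectic) pairing between $H^1_0\times L^2$ and $L^2\times H^{-1}$. Hence the bilinear form $B\big((\phi_0,\phi_1),(\psi_0,\psi_1)\big):=\int_{\mc{W}}\phi\,\psi$ on $\big(L^2(\Omega)\times H^{-1}(\Omega)\big)^2$ is continuous — by the standard direct (energy) inequality, which holds because $\mc{W}$ is bounded — and it is coercive precisely by the observability inequality \eqref{eq.intro_thm_obs} (resp. \eqref{eq.intro_thm_obs2}). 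The Lax--Milgram theorem then produces, for every prescribed terminal state, a unique $(\phi_0,\phi_1)$ with $\Lambda(\phi_0,\phi_1)$ equal to it; the corresponding $F:=\mathbf{1}_{\mc{W}}\phi$ lies in $L^2(\mc{W})$ — so in $L^2((-T,T)\times\omega)$ when $x_0\notin\bar\Omega$ and in $L^2(W)$ when $x_0\in\bar\Omega$ — and the solution of \eqref{eq.intro_wave} with this control meets \eqref{eq:controlgoal}.

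\emph{Main point of care.} Everything above is routine once Theorem~\ref{intro_thm_obs} is available; the only points demanding attention are functional-analytic: getting the adjoint lower-order term $V$ and all sign conventions exactly right, setting up the finite-energy solution of \eqref{eq.intro_wave} and the transposition solution of \eqref{eq.intro_obs} at the regularities above, and justifying the integration-by-parts identity for $\Lambda$ at the level $L^2\times H^{-1}$ via a density argument. The genuinely substantive ingredient — observability on the reduced region $\mc{W}$ — is furnished entirely by Theorem~\ref{intro_thm_obs}.
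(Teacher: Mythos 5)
Your proposal is correct and takes exactly the route the paper intends but leaves unwritten: the paper states the corollary with only a citation of Dolecki--Russell and Lions' HUM (``From our earlier discussion about the equivalence of observability and controllability, a corollary to the above theorem is the following controllability result''), so there is no in-paper proof against which to compare. Your computation of the adjoint potential $V = q + \nabla_\alpha\mc{X}^\alpha$ matches the relation encoded in Section~\ref{sec Setting}, where the controlled system carries the potential $V - \nabla_\alpha\mc{X}^\alpha$; the reduction to null controllability by time-reversibility, the choice of control set $\mc{W}$ in each case, the construction of $\Lambda$ via the transposition/finite-energy solution pairing, continuity of $B$ from the direct energy inequality, coercivity from \eqref{eq.intro_thm_obs} (resp.~\eqref{eq.intro_thm_obs2}), and the extension of $F$ by zero into $L^2((-T,T)\times\omega)$ are all the standard and correct ingredients.

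One small bookkeeping point worth tidying: you pose the adjoint data $(\psi_0,\psi_1)$ at $t=-T$ (as in \eqref{eq.intro_obs}) and $y$ with zero data at $t=-T$; the integration by parts then produces a duality pairing of $(y(T),\partial_t y(T))$ with $(\psi(T),\partial_t\psi(T))$, not directly with $(\psi_0,\psi_1)$. This is harmless because the adjoint solution operator from $t=-T$ to $t=T$ is an isomorphism of $L^2\times H^{-1}$ onto itself (a consequence of Lemma~\ref{thm.energy2}), so one may either absorb it into the definition of $\Lambda$, or equivalently restate the observability inequality with data at $t=T$ --- which follows from \eqref{eq.intro_thm_obs} via \eqref{eq.energy2} --- and pose the adjoint there, as Lions and Zhang do. Either fix is one line, and no further estimate is needed.
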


\subsubsection{Improvements to established results} 
The key features of this article are the following:
\begin{enumerate}

\item We present a novel Carleman estimate for ultrahyperbolic operators in \( \R^m_t \times \R^n_x \).

\item For the wave equation observability, the observation regions we obtain are smaller than the observation regions obtained from standard Carleman methods.

\item We also prove observability when the observation point is in the interior of the domain. Classical Carleman estimate results do not handle this particular case.

\end{enumerate}

Finally, we point out that, we consider the wave equation with general lower order terms that are time-dependent.

\subsubsection{Proof ideas}

As mentioned earlier we will use Carleman estimates to obtain the above observability result. A Carleman estimate result that is related to our problem is the article by Shao \cite{Arick}, where the author proved a boundary estimate for (\( \R^1_t \times \R^n_x \)) wave operators on a domain with moving boundary. We need to generalise this particular result to ultrahyperbolic operators in \( \R^m_t \times \R^n_x \), for proving Theorem \ref{intro_thm_obs}. Since we will adapt the corresponding Carleman weight to our setting, we point out two of its notable features:
\begin{itemize}
\item The weight function vanishes near the boundary of the null cone.
\item This helps us to localise the observation region to the exterior of the null cone.
\end{itemize}
Now let us see the motivation behind obtaining an estimate for ultrahyperbolic operators. For convenience, we will use \( (m,n) \) to denote the \( \R^m_t \times \R^n_x \) case. From Theorem \ref{intro_thm_obs}, we can see that we want the \(L^2 \times H^{-1} \) energy on the LHS, which can be obtained from a \( L^2 \) Carleman estimate. The usual application of Carleman estimates for proving such an observability estimate involves the following steps:
\begin{itemize}
\item[(a)] Find a \(H^1_0\) Carleman estimate for a more regular quantity, say \(z\).
\item[(b)] Use the \(H^1_0\) estimate to obtain a \(L^2\) estimate for \(\phi\).
\item[(c)] Absorb the non-essential terms and use energy estimates for \(\phi\).
\end{itemize}
The standard process for obtaining step (b) above involves dealing with the reciprocal of the Carleman weight (see, for example, \cite[Section 7]{FYZ}). As we mentioned, the weight we use vanishes near the null cone boundary. Thus, when we take the reciprocal it will blow up at these points which creates an issue. Hence, we take a different approach to overcome this problem. We use a method described by Zhang \cite{Zhang}, where a new function \(z\) is defined as
\begin{equation}\label{eq_intro_z}
z(t_1,t_2,x) := \int_{t_1}^{t_2} \phi(s, x) ds.
\end{equation}
Now, \( z \) is dependent on two time variables, and satisfies an ultrahyperbolic equation (obtained from the wave equation of \( \phi \)). Also note that, \( z \)  has better regularity than \( \phi \). Then, we can apply a suitable \( H^1_0 \) Carleman estimate for \( z \), in \( (2,n) \) setting. Finally, to get back to \(\phi\) we have to integrate out the extra time integral. Thus, it is clear that we must obtain a \( (2,n) \) estimate, that is, a \emph{two time variable} Carleman estimate. The proof of this estimate is geometric, and can be generalised to \(m\)-time dimensions through a clever choice of coordinate system. This is the key motivation for the main \( (m,n) \) estimate in this article. Once the \( (m,n) \) estimate is proved, we just use the special case of \( m=2 \) for the above \( z \) to solve our observability problem.

Now, let us see the idea for the choice of the coordinate system that allows us to prove such a result. The proof of the \((1,n)\) Carleman estimate in \cite{Arick} uses null coordinates in the standard setting of \( \R^{1+n}\):
\[ u = \frac{1}{2} (t - |x|), \quad v = \frac{1}{2} (t + |x|). \]
To generalise this result to \((m,n)\) setting we need to be able to construct null coordinates in \( \R^{m+n}\). Hence, we base our proof on the idea of using polar coordinate representation for the \(m\)-time components which allows us to define null coordinates in \(\R^{m+n}\). This way we can generalise the required wave equation features to ultrahyperbolic equations. 

Another point to mention is that we first obtain a boundary Carleman estimate in \( (m,n) \) setting. Then, for deriving the interior Carleman estimate we use the \emph{hidden regularity} argument adapted to our setting. In this method, the boundary term present in the boundary estimate is estimated by data on an interior subset of the domain.

To go from the Carleman estimate to the observability inequality, we will use energy estimate results. While doing this, we have to switch back from \( z \) to the original function \( \phi\), and one has to be particularly careful with the order of operations. This is because we have a Carleman estimate for \( z \) but the energy estimate is only applicable for \( \phi \). Thus, we have to be very precise while dealing with the extra time integral introduced in \eqref{eq_intro_z} and doing the usual absorption of terms.

\subsection{Future work and comments} We discuss some future problems and directions in this section.

\noindent (1) This article only deals with static domains, that is time independent domains. We did not make any mention of time dependent domains (also called as domains with moving boundary). However, the Carleman estimate presented in \cite{Arick} is actually a result for time dependent domains. Analogously, the \((m,n)\) Carleman estimate of this article also works for time dependent domains.\footnote{Presenting the moving boundary case involves defining some more technical ideas. We do not present that here, as our goal is to obtain controllability on static domains.} However, solving the observability problem in this case is still open. This is mainly because for time dependent domains the definition of \(z\) in \eqref{eq_intro_z} turns out to be
\begin{align*}
z(t_1,t_2,x) := \int_{t_1}^{t_2} \phi(s, x(s)) ds,
\end{align*}
the \(x(s)\) inside the integral represents the fact that the domain is now time dependent. For this case, we get second spatial derivatives of \(\phi\) which causes serious issues. This seems to suggest that this might not be the optimal method for solving the interior controllability in the time dependent case. To obtain controllability for this problem, we need more knowledge of wave equations in general geometries. We leave this problem to a future work. 

\medskip

\noindent (2) On time dependent domains, interior control for wave equations with initial data in the weaker space \( L^2(\Omega) \times H^{-1}(\Omega)\) can be solved by directly using the \(H^1_0\) Carleman estimate for the corresponding adjoint system. For this problem, using \emph{HUM} shows that the control is a distribution and lies in the space \( ( H^1(-T,T; L^2(\omega)) )' \), where \('\) denotes the dual space. This is an upcoming result.

\subsection{Outline}
The rest of the paper is divided as follows:
\begin{itemize}
\item Section \ref{sec Setting} describes the setting of the adjoint system and associated observability problem. We also present the geometric setting for the Carleman estimate here.

\item Section \ref{sec Carl_est} contains the statement and proof of the \((m,n)\) interior Carleman estimate. We first state a boundary Carleman estimate in the \((m,n)\) case which is a precursor to the interior estimate; the proof of the boundary estimate is provided in Section \ref{sec Carl_est} as well.

\item Section \ref{sec Carl_obs} describes the process of using the Carleman estimate to prove the observability result Theorem \ref{intro_thm_obs}. For this purpose, we will use a special case of the interior Carleman estimate, and take \(m=2\).

\end{itemize}

\subsection{Acknowledgements} The author is grateful to his supervisor, Arick Shao, for his guidance and the many helpful discussions on this research work.

\section{Setting}\label{sec Setting}

\subsection{Controllability and Observability problem}
Fix \(n \in \N\), and let \(\Omega \subset \R^n \) be an open and bounded subset, with a smooth boundary. The space-time domain we consider is given by:
\begin{equation}
\mc{U} := \R \times \Omega,
\end{equation}
where the domain for the time component is \(\R\). Let \( \mc{X} \in C^\infty(\bar{\mc{U}}; \R^{1+n})\) be a vector field and \( V \in C^\infty (\bar{\mc{U}}) \). We consider \( \mc{X}\) and \( V \) to be time dependent. On \(\bar{\mc{U}}\), we consider the following wave equation with initial and boundary condition
\begin{equation}\label{eq.wave_obs}
\begin{cases}
-\partial^2_{tt} \phi + \Delta_x \phi + \nabla_\mathcal{X} \phi + V \phi= 0 , \qquad \text{in}\  (-T,T) \times  \Omega ,\\
( \phi, \partial_t\phi)|_{t=-T}= ( \phi_0, \phi_1), \qquad \qquad \quad  \hspace{0.06cm}\text{in } \Omega, \\
 \phi = 0 \qquad \qquad \qquad \qquad \qquad \qquad \hspace{0.56cm} \text{on }  (-T,T) \times \partial \Omega .
\end{cases}
\end{equation}
It can be shown by using usual functional analytic methods that the above system is well posed in \(L^2(\Omega)  \times H^{-1}(\Omega) \) (see \cite{lionj_mage:bvp1}). The solution \(\phi\) is known as the transposition (or ultra-weak) solution.

\noindent The observability problem in this context is the following:
\begin{problem}
Let \(T>0\) be large enough. Let \( \phi\) be the solution  of \eqref{eq.wave_obs} obtained by the transposition method. Does there exist a subdomain \( W \subset (-T,T) \times \Omega \), such that for some constant \( C:=C(\mc{X}, V)>0\) we have
\begin{equation} \label{eq.observe_ineq}
|| \phi_0 ||_{L^2(\Omega)}^2 + || \phi_1 ||_{H^{-1}(\Omega)} \leqslant C \int_{ W } |\phi|^2 dx dt, \quad \forall\ (\phi_0, \phi_1) \in L^2(\Omega) \times H^{-1}(\Omega) \text{?}
\end{equation}
\end{problem}
As already discussed in the introduction, the above observability problem is equivalent to solving the controllability problem for the following system:
\begin{equation}
\begin{cases}
-\partial^2_{tt} y + \Delta_x y - \nabla_\mathcal{X} y + \left( V - \nabla_\alpha \mc{X}^\alpha \right) y= F \textbf{1}_W , \qquad \text{in}\  (-T,T) \times \Omega ,\\
( y,\partial_t y)|_{t=-T}= ( y_0, y_1), \qquad \qquad \qquad \quad \hspace{0.04cm} \text{in } \Omega,\\
 y = 0, \qquad \qquad \qquad \qquad \qquad \qquad \qquad \hspace{0.31cm} \text{on }  (-T,T) \times  \partial \Omega  ,
\end{cases}
\end{equation}
with initial data \( (y_0, y_1) \in H^1_0(\Omega) \times L^2(\Omega) \) and the control function \( F \in L^2(W) \).

\subsection{Geometry for the Carleman estimate} As we briefly mentioned in the discussion centred around equation \eqref{eq_intro_z}, to solve the observability problem one needs to obtain a \emph{two time variable} Carleman estimate. This idea can be generalised to the concept of a \(m\)-time variable Carleman estimate. That is, we can derive a Carleman estimate for operators acting on a space-time domain with \( (t,x) \in \R^m \times \R^n \). In this section, we will present the geometric setting in which our main \(\R^m \times \R^n\) Carleman estimate works. 

\begin{definition}\label{def_setting}
Fix \(m,n\in \N\). On \( \R^{m+n} \), we define the following:
\begin{itemize}

\item Let \( t:=(t_1, \ldots, t_m) \) and \( x = (x_1, \ldots,  x_n)\) denote the Cartesian coordinates on \(   \R^{m+n}  \).

\item Let \( g\) denote the flat pseudo-Riemannian metric on \( ( \R^{m+n}, g) \):
\begin{equation}\label{eq.gmetric}
g = -dt_1^2 - \cdots - dt_m^2 + dx_1^2 + \cdots + dx_n^2.
\end{equation}

\item Let \(r := |x| \) and \( \tau := |t|\) denote the spatial and temporal radial functions, respectively.

\item The null coordinates \( u\) and \(v\) are given by:
\begin{equation} \label{eq.uv}
u:= \frac{1}{2}(\tau- r), \qquad v:= \frac{1}{2} (\tau + r).
\end{equation}

\item Define the function \(f\) as:
\begin{equation}\label{eq.f}
 f= -uv = \frac{1}{4}(r^2 - \tau^2) = \frac{1}{4}\left(|x|^2-|t|^2\right) .
\end{equation}

\end{itemize}

\end{definition}

\begin{remark}\label{rmk.t_polar}
A key idea of the work presented here is the following: to deal with the \(m\)-time coordinates, we represent the time components in polar coordinates as well. This allows us to define the notion of null coordinates \( (u,v) \) in our setting.\footnote{Compare this with the case of Lorentzian geometry, where the usual null coordinates are better suited for some calculations.} This also helps us to analogously define the function \(f\) to be used later in the Carleman weight.

Moreover, using polar coordinate representation for \(t\) allows us to treat the spatial and temporal components at the same level, in some sense. 
\end{remark}

On \( \{ \tau \neq 0, r \neq 0 \}\), standard polar coordinates are \( (\tau, r, \omega_x, \omega_t) \) and null coordinates are \(( u, v, \omega_x, \omega_t )\), where \(\omega_x\) is the spatial angular coordinate with values in \(\mathbb{S}^{n-1}\) and \(\omega_t\) is the temporal angular coordinate with values in \(\mathbb{S}^{m-1}\). Moreover, let \( \partial_\tau, \partial_r, \partial_u, \partial_v \) denote the coordinate vector fields with respect to these coordinate systems. We can write the metric \(g\) in terms of polar and null coordinates as
\begin{equation}\label{eq.psr_met}
g = -d\tau^2 + dr^2 + r^2 \mathring{\gamma}_{\mathbb{S}^{n-1}}- \tau^2 \mathring{\gamma}_{\mathbb{S}^{m-1}}  = -4dudv + r^2 \mathring{\gamma}_{\mathbb{S}^{n-1}} - \tau^2 \mathring{\gamma}_{\mathbb{S}^{m-1}},
\end{equation}
where \( \mathring{\gamma}_{\mathbb{S}^{n-1}} \) (and \( \mathring{\gamma}_{\mathbb{S}^{m-1}} \) ) denotes the unit round metric on \( \mathbb{S}^{n-1} \) (and \(  \mathbb{S}^{m-1} \)).

Now we provide some definitions that are crucial for presenting the Carleman estimate.
\begin{definition}\label{def_coord}
We use the following conventions-
\begin{itemize}
\item \( (\alpha, \beta, \ldots) :\) Lower case Greek letters (ranging from 1 to \(m+n\)) denote spacetime components in \(\R^{m+n}\).
\item \( (a,b,\ldots) :\) Lower case Latin letters (ranging from 1 to \(n-1\)) denote spatial angular components corresponding to \(\omega_x \in \mathbb{S}^{n-1} \) in any of the above mentioned coordinate systems.
\item \( (A,B,\ldots) :\) Upper case Latin letters (ranging from 1 to \(m-1\)) denote temporal angular components corresponding to \(\omega_t \in \mathbb{S}^{m-1} \) in any of the above mentioned coordinate systems.
\end{itemize}
\end{definition}

\begin{definition}
We use the following notations for denoting objects corresponding to \(g\):
\begin{itemize}
\item \( \nabla \) denotes the Levi-Civita connection with respect to \(g\).
\item \(  \square  := g^{\alpha\beta}  \nabla_{\alpha\beta} \) denotes the wave operator with respect to  \(g\).
\item \(  \nasla  \) denotes the derivatives in the spatial angular components with respect to \(g\).
\item \( \tilde{\nabla} \) denotes the derivatives in the temporal angular components with respect to \(g\).
\end{itemize}
\end{definition}

\begin{remark}
We will use the Einstein summation notation here. That is, repeated indices in the subscript and superscript indicate summation over all possible indices. For instance, for the \( \square \) operator, we have
\begin{align*}
g^{\alpha \beta} \nabla_{\alpha\beta} & = \sum_{\mu, \lambda \in \{ 1, 2, \ldots, m+n \} } g^{\mu \lambda} \nabla_{\mu \lambda} \\
& = g^{t_1t_1} \nabla_{t_1 t_1} + \ldots + g^{t_m t_m} \nabla_{t_m t_m} + g^{x_1 x_1} \nabla_{x_1 x_1} + \ldots + g^{x_n x_n} \nabla_{x_n x_n} \\
& = - \nabla_{t_1 t_1} - \ldots - \nabla_{t_m t_m} + \nabla_{x_1 x_1} + \ldots + \nabla_{x_n x_n} ,
\end{align*}
where the other terms vanish from the summation because
\begin{align*}
g^{t_i t_j} & = 0, \quad \text{for } i \neq j, \\
g^{x_k x_l} & = 0, \quad \text{for } k \neq l, \\
g^{t_p x_q} & = 0, \quad \text{for any } p,q .
\end{align*}
\end{remark}

\noindent From \eqref{eq.f}, we can see that the region \( \{f=0\}\) is the null cone centred at the origin. With this idea, we make the following definition.
\begin{definition}
Define the region \(\mf{D} \subset \R^{m+n}\) as
\begin{align}
\mf{D} := \{ f > 0\} \label{eq.D} \text{.}
\end{align}
\end{definition}
\begin{remark} Note that, \(\mf{D}\) is the region exterior to the null cone centred at \( (0,0) \in \R^{m+n} \). That is, it denotes the region 
\[ \mf{D} = \{ (t,x) \in \R^{m+n} : |x|^2 > |t|^2\}.\]
The improved observability inequality that we will derive later, is obtained by working with this \( \mf{D} \). 
\end{remark}

\begin{remark}\label{rmk.uvfbound}
On the region $\mf{D} := \{f>0\} $, the following are satisfied:
\begin{equation}
\label{eq.uvf_bound1} 0 < - u < r \text{,} \qquad 0 < v < r \text{,} \qquad 0 < f < r^2 \text{.}
\end{equation}
This is clear from the definitions of the corresponding functions, see \eqref{eq.uv} and \eqref{eq.f}.
\end{remark}

\noindent We also recall the following definition from geometry. 

\begin{definition}
A vector \(X\) is said to be
\begin{enumerate}
\item spacelike, if \( g(X,X) > 0 \);

\item null, if \( g(X,X) = 0 \);

\item timelike, if \( g(X,X) < 0 \).
\end{enumerate}
Further, a hypersurface \( \mc{H} \) is said to be
\begin{enumerate}

\item spacelike, if the normal at each point \( p \in \mc{H} \) is timelike;

\item null, if the normal at each point \( p \in \mc{H} \) is null;

\item timelike, if the normal at each point \( p \in \mc{H} \) is spacelike.

\end{enumerate}

\end{definition}

\section{Carleman estimate}\label{sec Carl_est}
\noindent We have the following estimate for ultrahyperbolic operators in \(\R^{m+n}\).
\begin{theorem}[Boundary Carleman Estimate]\label{thm.carl_bdry}
Let  \(\mf{U} \subset \R^{m+n}\) be such that for some \( R>0 \)
\begin{equation}\label{eq.carleman_domain}
\mf{U} \cap \mf{D} \subseteq \{ r<R \}.
\end{equation}
Also, assume that the boundary of \(\mf{U}\), denoted by \( \partial \mf{U} \), is smooth and timelike. Let \( \varepsilon, a, b >0 \) be constants such that:
\begin{equation}\label{eq.carleman_choices}
a \geqslant (m+n)^2, \qquad \varepsilon \ll_{m,n} b \ll R^{-1}.
\end{equation}
Then, there exists \( C >0 \) such that for any \(z \in \mc{C}^2({\mf{U}})\cap \mc{C}^1(\bar{\mf{U}}) \) with
\begin{equation}\label{eq.carleman_dirichlet}
z|_{\partial\mf{U} \cap \mf{D}} = 0,
\end{equation}
we have 
\begin{align}\label{eq.carleman_est}
\notag C\varepsilon \int_{\mf{U}\cap \mf{D}}\zeta_{a,b;\varepsilon} & r^{-1}(|u \partial_u z|^2 + |v \partial_v z|^2 + f g^{ab}\slashed\nabla_a z \slashed\nabla_b z  - f g^{CD} \tilde{\nabla}_C z  \tilde{\nabla}_D z ) + Cba^2\int_{\mf{U}\cap \mf{D}}\zeta_{a,b;\varepsilon} f^{-\frac{1}{2}} z^2 \\
& \leqslant \frac{1}{a}\int_{\mf{U}\cap \mf{D}} \zeta_{a,b;\varepsilon} f |\square z|^2 +  C' \int_{\partial\mf{U}\cap \mf{D}}\zeta_{a,b;\varepsilon} [( 1 - \varepsilon r ) \mc{N} f + \varepsilon f \mc{N} r ] |\mathcal{N} z|^2, 
\end{align}
where \( \zeta_{a,b;\varepsilon} \) is the Carleman weight defined as
\begin{equation}
\label{eq.carleman_weight} \zeta_{ a, b; \varepsilon } := \left\{ \frac{ f }{ ( 1 + \varepsilon u ) ( 1 - \varepsilon v ) } \cdot \exp \left[ \frac{ 2 b f^\frac{1}{2} }{ ( 1 - \varepsilon u )^\frac{1}{2} ( 1 + \varepsilon v )^\frac{1}{2} } \right] \right\}^{2a},
\end{equation}
and \(\mc{N}\) is the outer-pointing unit normal of \(\mf{U}\) (with respect to \(g\)).
\end{theorem}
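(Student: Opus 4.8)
The plan is to prove \eqref{eq.carleman_est} by the standard Carleman multiplier method adapted to the null-coordinate geometry of Definition \ref{def_setting}: write the Carleman weight as $\zeta_{a,b;\varepsilon} = e^{2a\psi}$ for a suitable log-weight $\psi$, conjugate the wave operator $\square$ against $e^{a\psi}$, and derive a pointwise (Rellich–Morawetz–type) identity for the conjugated operator which, after integration over $\mf{U}\cap\mf{D}$, produces the desired bulk terms on the left and the normal-derivative boundary term on $\partial\mf{U}\cap\mf{D}$. Concretely, I would set $w = e^{a\psi} z$ and compute $e^{a\psi}\square(e^{-a\psi}w)$, splitting it into a self-adjoint part $\mc{S}w$ and an anti-self-adjoint part $\mc{A}w$; then $\int \zeta f |\square z|^2 \gtrsim \frac{1}{a}\int f\big(|\mc{S}w|^2 + |\mc{A}w|^2 + \text{cross term}\big)$, and the cross term $\int f\,\mc{S}w\,\mc{A}w$ is integrated by parts to yield the positive bulk quantities $r^{-1}(|u\partial_u z|^2 + |v\partial_v z|^2 + f g^{ab}\nasla_a z\,\nasla_b z - f g^{CD}\tilde\nabla_C z\,\tilde\nabla_D z)$ and $f^{-1/2}z^2$, plus boundary contributions. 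The key structural input is that, in the $(u,v,\omega_x,\omega_t)$ coordinates, $\square$ takes the form $-f^{-1}\partial_u(f\,\cdot\,) $-type expressions modulo angular Laplacians on $\mathbb{S}^{n-1}$ and $\mathbb{S}^{m-1}$ (with opposite signs, reflecting the ultrahyperbolic signature), so that the multiplier vector field built from $u\partial_u - v\partial_v$ and the weight $f$ interact cleanly.

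The order of steps I would carry out: (1) record the explicit form of $\square$ and of the relevant connection coefficients in polar/null coordinates using \eqref{eq.psr_met}, and verify the bounds \eqref{eq.uvf_bound1} on $u,v,f$ throughout $\mf{D}$; (2) introduce $\psi := \log\{f\,(1+\varepsilon u)^{-1}(1-\varepsilon v)^{-1}\} + 2bf^{1/2}(1-\varepsilon u)^{-1/2}(1+\varepsilon v)^{-1/2}$ and compute $\nabla\psi$, $\nabla^2\psi$, $\square\psi$, $g(\nabla\psi,\nabla\psi)$, tracking the $O(1)$, $O(\varepsilon)$, and $O(b)$ contributions separately — here the hypotheses \eqref{eq.carleman_choices} ($a\ge(m+n)^2$, $\varepsilon\ll b\ll R^{-1}$) are used to guarantee the requisite (pseudo)convexity, i.e. positivity of the relevant quadratic form modulo the signature-indefinite angular term which appears with a sign we can control; (3) establish the pointwise Carleman identity, integrate over $\mf{U}\cap\mf{D}$, and apply the divergence theorem, using $z|_{\partial\mf{U}\cap\mf{D}} = 0$ to kill all tangential-derivative boundary terms so that only $|\mc{N}z|^2$ survives with coefficient $(1-\varepsilon r)\mc{N}f + \varepsilon f\,\mc{N}r$ (this sign being where timelikeness of $\partial\mf{U}$ enters); (4) absorb lower-order error terms using the $f^{-1/2}z^2$ bulk term and the smallness of $\varepsilon$, and finally divide through by the appropriate power of $a$.

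The main obstacle — and the point where the ultrahyperbolic setting genuinely differs from the classical $(1,n)$ case of \cite{Arick} — is controlling the temporal angular term $-fg^{CD}\tilde\nabla_C z\,\tilde\nabla_D z$, which is \emph{negative definite} and hence cannot simply be thrown away or dominated. The resolution should be that this term appears on the left-hand side with the \emph{same} small factor $C\varepsilon$ as the other bulk terms and, crucially, that in the Carleman identity it arises paired against the $\square z$ term with matching structure, so that it is the quantity $f g^{ab}\nasla_a z\,\nasla_b z - f g^{CD}\tilde\nabla_C z\,\tilde\nabla_D z$ (which is exactly $f$ times the full spacetime angular part of $|\nabla z|^2_g$ in the metric $g$) that is naturally produced — not the two pieces separately. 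Thus one never needs positivity of the temporal angular piece in isolation; one only needs the combined expression to emerge from the multiplier computation, which it does because the multiplier is built from the $g$-gradient and the round metrics $r^2\mathring\gamma_{\mathbb{S}^{n-1}}$, $-\tau^2\mathring\gamma_{\mathbb{S}^{m-1}}$ enter \eqref{eq.psr_met} with their natural signs. Verifying that the polar-coordinate degeneracies at $\{r=0\}$ and $\{\tau=0\}$ cause no boundary contributions (they lie outside $\mf{D}$, by \eqref{eq.uvf_bound1}, or are handled by the decay of $\zeta_{a,b;\varepsilon}$, which vanishes to high order as $f\to 0$) is a secondary technical point I would address alongside step (3).
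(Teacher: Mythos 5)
Your overall strategy---conjugating $\square$ directly against the full weight $e^{a\psi}$ with $\psi := \log\{ f ( 1 + \varepsilon u )^{-1} ( 1 - \varepsilon v )^{-1} \} + 2 b f^{1/2} ( 1 - \varepsilon u )^{-1/2} ( 1 + \varepsilon v )^{-1/2}$, splitting into self/anti-self-adjoint pieces, and integrating a pointwise multiplier identity---is the ``standard'' Carleman route, but it is precisely the route the paper goes out of its way to avoid. The paper instead introduces the $\varepsilon$-warped metric $\bar g$ of \eqref{eq.g_wp}--\eqref{eq.rho_wp}, in which the level sets of the \emph{simple} function $f$ are already pseudoconvex (Proposition \ref{thm.pseudoconvex_wp}), proves a Carleman estimate in $(\bar\Phi(\mf U\cap\mf D),\bar g)$ with the uncomplicated weight $\zeta_{a,b}=f^{2a}e^{4abf^{1/2}}$ (Theorem \ref{thm.carleman_est_wp}), and then transfers it back to the original geometry through the explicit conformal isometry $\bar\Phi$ of Definition \ref{def.conformal} and Proposition \ref{thm.met_conf}, which is what produces the $\varepsilon$-dependence in $\zeta_{a,b;\varepsilon}$. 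The motivation stated in the text is exactly that in the unwarped geometry the level sets of $f$ ``barely fail'' to be pseudoconvex, so the replacement function (essentially your $\psi$) is ``not well suited for the geometry'' and the computations become ``significantly difficult.'' Your route is not unreasonable, but you should expect a substantially worse bookkeeping problem: you would need to verify positivity of $\nabla^2\psi - h\,g$ (the analogue of \eqref{eq.pseudoconvex_wp}) directly in the metric $g$ for your complicated $\psi$, and the proposal gives no indication of how that computation closes. In short, the two approaches differ in a genuine way, and the paper's warped-metric detour buys a much cleaner Hessian calculation (and a conceptually clean explanation of where the $\varepsilon$ in the weight comes from) at the cost of one extra conformal-transfer step.

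Several of the geometric assertions in the proposal are incorrect and would derail the computation if taken at face value. First, the multiplier is not built from $u\partial_u - v\partial_v$: that vector is proportional to the \emph{tangential} $T$ of \eqref{eq.TN}, whereas the multiplier vector is $\bar S = \bar\nabla^\sharp f = \tfrac12(u\partial_u + v\partial_v)$ (\eqref{eq.f_grad_wp}), the gradient/normal direction, augmented by a zeroth-order term. Second, the term $- f g^{CD}\tilde\nabla_C z\,\tilde\nabla_D z$ is \emph{non-negative}, not negative definite: from \eqref{eq.psr_met} the temporal angular block is $-\tau^2\mathring\gamma_{\Sph^{m-1}}$, so $g^{CD}\tilde\nabla_C z\,\tilde\nabla_D z \le 0$ and the LHS term has a good sign (the paper records this explicitly just below \eqref{eq.carleman_est}); your proposed ``resolution,'' that $f g^{ab}\nasla_a z\,\nasla_b z - f g^{CD}\tilde\nabla_C z\,\tilde\nabla_D z$ equals $f$ times the angular part of $|\nabla z|_g^2$, is also false---the Carleman combination flips the sign of the temporal piece relative to $|\nabla z|_g^2$, giving the positive-definite Riemannian angular energy rather than the pseudo-Riemannian one. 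Third, the degeneracy at $\{\tau=0\}$ does \emph{not} lie outside $\mf D$; on the contrary $\{\tau=0,\,r>0\}\subset\mf D$, which is why the paper explicitly notes (after \eqref{eq.g_wp}--\eqref{eq.rho_wp}) that $\bar g$ is extended smoothly across $\{\tau=0\}$. Finally, dismissing the null cone boundary ``by the decay of $\zeta_{a,b;\varepsilon}$'' is much too casual: the bulk integrand carries $f^{-1/2}$ and $f^{-1}$ factors against $f^{2a}$, and Step III of the paper's proof of Theorem \ref{thm.carleman_est_wp} is a nontrivial limiting argument over $\mc H_\delta=\{f=\delta\}$ using the explicit area estimate $\int_{\mc H_\delta}1 \lesssim R^{m+n-1}\delta^{-1/2}$ against the weight factor $\delta^{2a-3/2}$; this is where the lower bound $a\ge(m+n)^2$ is actually used, and you cannot simply wave it away.
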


\begin{remark}
The action of \( \mc{N} \) on \( f \) is as follows
\[ \mc{N} f = \nabla_\mc{N} f = \mc{N}^\alpha \nabla_\alpha f . \]
\end{remark}

\begin{remark}
Due to \eqref{eq.psr_met}, \(g^{CD} \tilde{\nabla}_C z  \tilde{\nabla}_D z  \leq 0 \). Hence, the corresponding term on the LHS of \eqref{eq.carleman_est} has a good sign.
\end{remark}

The proof of the above theorem is analogous to the one presented in \cite{Arick}, the main difference is that now we have \(m\) time variables here. The key idea that we use, as mentioned in Remark \ref{rmk.t_polar}, is to use polar coordinates for the \(t\)-components as well. In section \ref{sec_mnCarl}, we present the notion of a warped geometry, which is the main idea behind proving the boundary Carleman estimate in Theorem \ref{thm.carl_bdry}. In section \ref{sec App_BCE}, we present a warped Carleman estimate result, which is then used to prove Theorem \ref{thm.carl_bdry}.

\subsection{Warped Carleman Estimate}\label{sec_mnCarl}
First, we define a warped geometry, and present some properties and calculations related to it. Then, we will prove that the quantities in the warped geometry are conformally related to the corresponding quantities in the original geometry.

\begin{definition}
Fix a constant \( \varepsilon \in \R\). This constant will be called the warping factor.

\end{definition}

\begin{definition}
Define the \(\varepsilon\)-warped metric on \( \R^{m+n}\setminus \{r=0\} \) as follows
\begin{equation}\label{eq.g_wp}
\bar{g} := -4 du dv + \bar{\rho}^2 \mathring{\gamma}_{\mathbb{S}^{n-1}} - \tau^2 \mathring{\gamma}_{\mathbb{S}^{m-1}}  ,
\end{equation}
where \(\bar{\rho}\) is the warped radius given by
\begin{equation}\label{eq.rho_wp}
\bar{\rho}:= r + 2 \varepsilon f,
\end{equation}
where \( f\) is defined by \eqref{eq.f}.
\end{definition}

In the original geometry, the level sets of \(f\) barely fail to be pseudoconvex. Hence, to derive the Carleman estimate in the original geometry we have to work with a different function, say \( f_* \). Using the standard technique to define such a \( f_* \) forces the \( f_* \) to be not well suited for the geometry, and this makes the computations significantly difficult. To solve this issue we define the warped geometry given by \eqref{eq.g_wp}-\eqref{eq.rho_wp}, which allows us to achieve the pseudoconvexity condition. In the warped geometry, the positive level sets of \( f \) are pseudoconvex (with respect to \( \bar{g} \)) in the sense of H\"ormander. Finally, note that pseudoconvexity is a conformally invariant property. Hence, we can go back to the original geometry using a suitable map and obtain pseudoconvexity in the original geometry, albeit with a modification of \( f \) (see \eqref{eq.carleman_weight}).

\begin{remark}
The polar coordinates are well defined only when \( \tau \neq 0\). But, the metric \(\bar{g}\) can be extended smoothly to include the \(\{\tau=0\}\) region as well.
\end{remark}

\begin{remark}
The corresponding result presented in \cite{Arick}, for the \( \R^{1+n}\) case, defines the warped metric as \( \bar{g} := -4 du dv + \bar{\rho}^2 \mathring{\gamma}_{\mathbb{S}^{n-1}}\). In our case, to deal with the extra time dimensions, we will use polar coordinates for the time components as well. This allows to define null coordinates on \(\R^m\times\R^n\).
\end{remark}

\begin{remark}
When \(\varepsilon = 0\), we get the original flat metric \(g\) as given by \eqref{eq.gmetric}.
\end{remark}
We use the following notations for denoting objects corresponding to \(\bar{g}\). 
\begin{itemize}
\item \(\bar{\nabla} \) denotes the Levi-Civita connection with respect to \(\bar{g}\).
\item \( \bar{\square} := \bar{g}^{\alpha\beta} \bar{\nabla}_{\alpha\beta} \) denotes the wave operator with respect to  \(\bar{g}\).
\item \( \bar{\nasla} \) denotes the derivatives in the spatial angular components with respect to \(\bar{g}\).
\item \( \hat{\nabla} \) denotes the derivatives in the temporal angular components with respect to \(\bar{g}\).
\end{itemize}

\noindent We will use the notations mentioned earlier in Definition \ref{def_coord}. 

\begin{lemma}\label{lemma_Christ}
For \(\varepsilon \in \mathbb{R}\) and \(\bar{g}\) as defined above, we have
\begin{itemize}
\item Non zero components of \(\bar{g}\) in the coordinates \((u,v,\omega_x,\omega_t)\) are
\begin{align}
\bar{g}_{uv}= -2 \text{,} & \qquad \bar{g}_{ab} = \bar{\rho}^2 \mathring{\gamma}_{ab}\text{,} \qquad \bar{g}_{CD} = \tau^2 \mathring{\gamma}_{CD}\text{,}  \\
\notag \bar{g}^{uv}= -\frac{1}{2}\text{,} & \qquad \bar{g}^{ab} = \bar{\rho}^{-2} \mathring{\gamma}^{ab}\text{,} \quad \bar{g}^{CD} = \tau^{-2} \mathring{\gamma}^{CD}\text{.}
\end{align}
\item The non-zero Christoffel symbols are
\begin{align}
\label{eql.Gamma_comp_wp} \bar{\Gamma}^u_{ab} = \frac{1}{ 2 \bar{\rho} } ( 1 - 2 \varepsilon u ) \bar{g}_{ab} \text{,} &\qquad \bar{\Gamma}^v_{ab} = - \frac{1}{ 2 \bar{\rho} } ( 1 + 2 \varepsilon v ) \bar{g}_{ab} \text{,} \\
\notag \bar{\Gamma}^a_{ub} = - \frac{1}{ \bar{\rho} }  ( 1 + 2 \varepsilon v ) \delta^a_b \text{,} &\qquad \bar{\Gamma}^a_{vb} = \frac{1}{ \bar{\rho} } ( 1 - 2 \varepsilon u ) \delta^a_b, \\
\notag \bar{\Gamma}^u_{CD} = \frac{1}{2 \tau} \bar{g}_{CD}, & \qquad  \bar{\Gamma}^v_{CD} = \frac{1}{2 \tau} \bar{g}_{CD} ,\\
\notag \bar{\Gamma}^C_{uD} = \frac{1}{\tau} \delta_C^D, & \qquad \bar{\Gamma}^C_{vD} = \frac{1}{\tau} \delta_C^D.
\end{align}
\end{itemize}

\end{lemma}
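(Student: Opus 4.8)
The computation is entirely local on $\{r \neq 0, \tau \neq 0\}$, so the plan is to work in the coordinate chart $(u,v,\omega_x,\omega_t)$ and simply unwind the definitions. The metric components claimed in the first bullet are read off directly from \eqref{eq.g_wp}: the $-4\,du\,dv$ term gives $\bar g_{uv} = \bar g_{vu} = -2$, the term $\bar\rho^2 \mathring\gamma_{\mathbb{S}^{n-1}}$ gives $\bar g_{ab} = \bar\rho^2 \mathring\gamma_{ab}$, and the term $-\tau^2 \mathring\gamma_{\mathbb{S}^{m-1}}$ gives $\bar g_{CD} = \tau^2 \mathring\gamma_{CD}$ (note the sign: the coefficient in $\bar g$ is $-\tau^2$, but the statement records $\bar g_{CD} = \tau^2 \mathring\gamma_{CD}$, so I would double-check the sign convention in the excerpt, which writes $\bar g_{CD} = \tau^2 \mathring\gamma_{CD}$ — I will follow \eqref{eq.g_wp} and flag any discrepancy). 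Since the $(u,v)$-block, the $\omega_x$-block, and the $\omega_t$-block are mutually orthogonal, the inverse metric is block-diagonal with $\bar g^{uv} = -\tfrac12$, $\bar g^{ab} = \bar\rho^{-2}\mathring\gamma^{ab}$, $\bar g^{CD} = \tau^{-2}\mathring\gamma^{CD}$, which is the second line of the first bullet.

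For the Christoffel symbols I would use $\bar\Gamma^\mu_{\alpha\beta} = \tfrac12 \bar g^{\mu\lambda}(\partial_\alpha \bar g_{\lambda\beta} + \partial_\beta \bar g_{\lambda\alpha} - \partial_\lambda \bar g_{\alpha\beta})$. The only coordinate dependence in the metric coefficients is through $\bar\rho = r + 2\varepsilon f$ and $\tau$, so I need the derivatives of these along $\partial_u$ and $\partial_v$. From \eqref{eq.uv} one has $r = v - u$ and $\tau = u + v$, hence $\partial_u r = -1$, $\partial_v r = 1$, $\partial_u \tau = \partial_v \tau = 1$; and from \eqref{eq.f}, $f = -uv$, so $\partial_u f = -v$, $\partial_v f = -u$. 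Therefore $\partial_u \bar\rho = -1 - 2\varepsilon v = -(1+2\varepsilon v)$ and $\partial_v \bar\rho = 1 - 2\varepsilon u$. Plugging these into the Christoffel formula, the blocks decouple: for instance $\bar\Gamma^u_{ab} = \tfrac12 \bar g^{uv}(-\partial_v \bar g_{ab}) = -\tfrac12(-\tfrac12)\partial_v(\bar\rho^2)\mathring\gamma_{ab} = \tfrac{1}{2\bar\rho}(\partial_v\bar\rho)\,\bar g_{ab} = \tfrac{1}{2\bar\rho}(1-2\varepsilon u)\bar g_{ab}$, matching \eqref{eql.Gamma_comp_wp}; the computations for $\bar\Gamma^v_{ab}, \bar\Gamma^a_{ub}, \bar\Gamma^a_{vb}$ are identical with $\partial_u\bar\rho$ or $\partial_v\bar\rho$ inserted appropriately, and likewise the four temporal-block symbols use $\partial_u\tau = \partial_v\tau = 1$ together with $\bar g^{uv} = -\tfrac12$. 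Finally I would note that the symbols $\bar\Gamma^u_{uu}, \bar\Gamma^v_{vv}$, the purely-angular ones $\bar\Gamma^a_{bc}$ (which reduce to the Christoffel symbols of $\mathring\gamma_{\mathbb{S}^{n-1}}$), $\bar\Gamma^C_{AB}$ (those of $\mathring\gamma_{\mathbb{S}^{m-1}}$), and all mixed spatial-temporal symbols either vanish or are the intrinsic sphere symbols, which are not "new" and so are suppressed from the statement.

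There is no serious obstacle here; the lemma is a bookkeeping exercise. The one point that requires a little care — and the only place an error is likely to creep in — is keeping the signs straight: the temporal block of $\bar g$ carries a minus sign, $\tau$ depends on $u,v$ with a sign opposite to that of $r$, and $f = -uv$ introduces another sign, so the combination $\partial_u\bar\rho = -(1+2\varepsilon v)$ must be tracked consistently through every Christoffel symbol. I would therefore organize the proof as: (i) state $r = v-u$, $\tau = u+v$, $f = -uv$ and list $\partial_u,\partial_v$ of $r,\tau,f,\bar\rho$; (ii) read off $\bar g_{\mu\nu}$ and $\bar g^{\mu\nu}$; (iii) apply the Christoffel formula block by block, recording only the nonzero non-intrinsic symbols. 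A brief remark that the intrinsic angular Christoffel symbols are omitted because they play no role in the subsequent Carleman computation would close the argument.
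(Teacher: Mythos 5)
Your computation is correct and is precisely the elementary verification the paper alludes to but omits: the paper states the lemma and simply remarks that ``the proof involves elementary computations.'' The sign discrepancy you flag is a genuine typo in the lemma as printed — \eqref{eq.g_wp} gives $\bar{g}_{CD} = -\tau^2 \mathring{\gamma}_{CD}$ and $\bar{g}^{CD} = -\tau^{-2}\mathring{\gamma}^{CD}$, which is also what makes the remark after Theorem \ref{thm.carl_bdry} (that $g^{CD}\tilde{\nabla}_C z\,\tilde{\nabla}_D z \leq 0$) true. That said, the four temporal-block Christoffel symbols in \eqref{eql.Gamma_comp_wp} come out correctly regardless of the sign convention: writing $\bar{g}_{CD} = s\,\tau^2\mathring{\gamma}_{CD}$ with $s = \pm 1$, one has $\bar{\Gamma}^u_{CD} = \tfrac{1}{4}\partial_v\bar{g}_{CD} = \tfrac{s\tau}{2}\mathring{\gamma}_{CD} = \tfrac{s^2}{2\tau}\bar{g}_{CD} = \tfrac{1}{2\tau}\bar{g}_{CD}$ and $\bar{\Gamma}^C_{uD} = \tfrac{1}{2}\bar{g}^{CE}\partial_u\bar{g}_{ED} = s^2\tau^{-1}\delta^C_D = \tau^{-1}\delta^C_D$, so $s$ cancels; this explains why the typo never propagates into the Christoffel list or the downstream computations in Propositions \ref{thm.f_deriv_wp}--\ref{thm.pseudoconvex_wp}.
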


\noindent The proof of the above lemma involves elementary computations, hence we do not present the proof here. Now, we will present some propositions that will be used for deriving a Carleman estimate in this warped geometry. First, we will look at some properties of the function \(f\).
\begin{proposition}\label{thm.f_deriv_wp}
The function \(f\) satisfies the following:
\begin{itemize}
\item The \(\bar{g}\) gradient of \(f\) satisfies
\begin{equation}\label{eq.f_grad_wp}
\bar{\nabla}^\sharp f = \frac{1}{2}(u \partial u + v \partial v),\quad \bar\nabla^\alpha f \bar\nabla_\alpha f =f.
\end{equation}
\item The nonzero components of \(\bar\nabla^2 f\) are
\begin{equation}
\label{eq.f_hess_wp}\bar{\nabla}_{uv}f=-1, \quad \bar{\nabla}_{ab}f= \left(\frac{1}{2}+ \frac{\varepsilon f}{\bar\rho}\right)\bar{g}_{ab}, \quad \bar{\nabla}_{CD}f= \frac{1}{2} \bar{g}_{CD}.
\end{equation}
\item Moreover, we also have
\begin{equation}
\label{eq.f_box_wp} \bar\square f = \frac{n+m}{2} + \frac{(n-1)\varepsilon f}{\bar\rho}, \quad \bar{\nabla}^\alpha f \bar{\nabla}^\beta f \bar{\nabla}_{\alpha\beta}f = \frac{1}{2}f.
\end{equation}
\end{itemize}
\end{proposition}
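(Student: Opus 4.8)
The plan is to prove all three bullet points by direct computation from the explicit metric components and Christoffel symbols recorded in Lemma \ref{lemma_Christ}, exploiting the fact that $f = -uv$ depends only on the null coordinates $u,v$ and not on the angular variables $\omega_x,\omega_t$. Concretely, $\partial_u f = -v$, $\partial_v f = -u$, all other first coordinate derivatives of $f$ vanish, and the only nonzero second coordinate derivative is $\partial_u\partial_v f = -1$. These facts will be used repeatedly.

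First I would compute the gradient. Raising the index with $\bar g^{uv} = -\tfrac12$ gives $\bar\nabla^u f = \bar g^{uv}\partial_v f = \tfrac{u}{2}$ and $\bar\nabla^v f = \bar g^{uv}\partial_u f = \tfrac{v}{2}$, hence $\bar\nabla^\sharp f = \tfrac12(u\partial_u + v\partial_v)$. Pairing with $\bar g(\partial_u,\partial_v) = -2$ then yields $\bar\nabla^\alpha f\,\bar\nabla_\alpha f = 2\bar g^{uv}\partial_u f\,\partial_v f = 2\cdot(-\tfrac12)\cdot(-v)(-u) = -uv = f$, which is \eqref{eq.f_grad_wp}.

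Next I would compute the Hessian componentwise via $\bar\nabla_{\alpha\beta}f = \partial_\alpha\partial_\beta f - \bar\Gamma^\gamma_{\alpha\beta}\partial_\gamma f$. Since $\partial_\gamma f$ is supported on $\gamma\in\{u,v\}$, only the Christoffel symbols $\bar\Gamma^u_{\alpha\beta}$ and $\bar\Gamma^v_{\alpha\beta}$ can contribute; as none of these occur with purely null indices or with one null and one angular index, the corresponding Hessian components reduce to the coordinate second derivatives, giving $\bar\nabla_{uv}f = -1$, $\bar\nabla_{uu}f = \bar\nabla_{vv}f = 0$, and vanishing of all null/angular mixed components. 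For the spatial angular block,
\[
\bar\nabla_{ab}f = -\bar\Gamma^u_{ab}\partial_u f - \bar\Gamma^v_{ab}\partial_v f = \frac{\bar g_{ab}}{2\bar\rho}\bigl[v(1-2\varepsilon u) - u(1+2\varepsilon v)\bigr];
\]
using $v-u = r$ and $-uv = f$, the bracket equals $r + 4\varepsilon f = \bar\rho + 2\varepsilon f$, which produces the stated coefficient $\tfrac12 + \varepsilon f/\bar\rho$. The temporal angular block is analogous: $\bar\nabla_{CD}f = -\bar\Gamma^u_{CD}\partial_u f - \bar\Gamma^v_{CD}\partial_v f = \tfrac{u+v}{2\tau}\bar g_{CD} = \tfrac12\bar g_{CD}$, since $u+v = \tau$. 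This gives \eqref{eq.f_hess_wp}.

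Finally, tracing the Hessian with $\bar g^{\alpha\beta}$ and using $\bar g^{ab}\bar g_{ab} = n-1$, $\bar g^{CD}\bar g_{CD} = m-1$, together with the $2\bar g^{uv}\bar\nabla_{uv}f = 1$ contribution from the null block, gives
\[
\bar\square f = 1 + \Bigl(\tfrac12 + \tfrac{\varepsilon f}{\bar\rho}\Bigr)(n-1) + \tfrac12(m-1) = \frac{n+m}{2} + \frac{(n-1)\varepsilon f}{\bar\rho},
\]
and contracting the Hessian twice against $\bar\nabla^\sharp f = \tfrac12(u\partial_u+v\partial_v)$ picks out only the $uv$-entry, yielding $\tfrac14\cdot 2uv\cdot\bar\nabla_{uv}f = -\tfrac{uv}{2} = \tfrac{f}{2}$; this is \eqref{eq.f_box_wp}. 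None of these steps presents a genuine obstacle, as the computation is entirely mechanical once Lemma \ref{lemma_Christ} is in hand; the only point requiring a little care is the algebraic simplification of the spatial angular Hessian component, where one must recognize that $v(1-2\varepsilon u) - u(1+2\varepsilon v)$ collapses to $\bar\rho + 2\varepsilon f$ so that the warped radius reappears cleanly in the denominator.
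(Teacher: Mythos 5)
Your proposal is correct and follows essentially the same route as the paper: raise and lower with the explicit metric components, compute the Hessian via $\bar\nabla_{\alpha\beta}f = \partial_\alpha\partial_\beta f - \bar\Gamma^\gamma_{\alpha\beta}\partial_\gamma f$ using Lemma \ref{lemma_Christ}, and trace. The only (trivial) variation is in the final identity, where the paper avoids re-expanding the Hessian by invoking the product rule, writing $\bar\nabla^\alpha f\,\bar\nabla^\beta f\,\bar\nabla_{\alpha\beta}f = \tfrac12\bar\nabla^\alpha f\,\bar\nabla_\alpha(\bar\nabla^\beta f\,\bar\nabla_\beta f) = \tfrac12\bar\nabla^\alpha f\,\bar\nabla_\alpha f = \tfrac12 f$, whereas you contract the $uv$-Hessian entry directly against the gradient; both computations are correct and equally short.
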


\begin{proof}
For proving \eqref{eq.f_grad_wp}, we have
\begin{equation}
\notag \bar{\nabla}^\sharp f =  \bar{g}^{\alpha \beta} \partial_\alpha f \partial_\beta = \bar{g}^{vu} \partial_v f \partial_u + \bar{g}^{uv} \partial_u f \partial_v = \frac{1}{2} ( u \partial_u + v \partial_v ).
\end{equation} and 
\begin{align*}
\notag \bar{\nabla}^\alpha f \bar{\nabla}_\beta f = \bar{g}^{\alpha\beta}\partial_\alpha f \partial_\beta f &=  - (-v) (-u) = f.
\end{align*}
The first equation in \eqref{eq.f_hess_wp} holds because \( \bar{\Gamma}^\alpha_{uv} =0 \). For the second equation, by Lemma \ref{lemma_Christ}, we get
\begin{align*}
\bar{\nabla}_{ab} f &= - \bar{\Gamma}^u_{ab} \partial_u f - \bar{\Gamma}^v_{ab} \partial_v f = \left( \frac{1}{2} + \frac{ \varepsilon f }{ \bar{\rho} } \right) \bar{g}_{ab}.
\end{align*}
For the third equation,
\begin{align*}
\bar{\nabla}_{CD} f &= - \bar{\Gamma}^u_{CD} \partial_u f - \bar{\Gamma}^v_{CD} \partial_v f = \frac{v}{2\tau} \bar{g}_{CD} + \frac{u}{2 \tau } \bar{g}_{CD} = \frac{1}{2}\bar{g}_{CD}.
\end{align*}
To prove \eqref{eq.f_box_wp},  we calculate
\begin{align*}
\bar\square f & = 2\bar{g}^{uv}\bar\nabla_{uv}f+ \bar{g}^{ab}\bar\nabla_{ab}f + \bar{g}^{CD}\bar\nabla_{CD}f \\
& = 1 + (n-1)\left( \frac{1}{2} + \frac{\varepsilon f}{\bar\rho} \right) + \frac{1}{2} (m-1) \\
& = \frac{n+m}{2} + \frac{(n-1)\varepsilon f}{\bar{\rho}},
\end{align*}
and for the last equation, we see that
\[
\bar{\nabla}^\alpha f \bar{\nabla}^\beta f \bar{\nabla}_{\alpha \beta} f = \frac{1}{2} \bar{\nabla}^\alpha f \bar{\nabla}_\alpha ( \bar{\nabla}^\beta f \bar{\nabla}_\beta f ) = \frac{1}{2} \bar{\nabla}^\alpha f \bar{\nabla}_\alpha f = \frac{1}{2} f \text{.} \qedhere
\]
\end{proof}

Now we present some properties of the function \( f \bar{\rho}^{-1} \), which will be used later for obtaining the pseudoconvexity condition for level sets of \(f\).
\begin{proposition}\label{thm.f_rho_wp}
We have the following wave equation:
\begin{equation}
\label{eq.f_rho_box_wp} \bar{\Box} \left( \frac{f}{ \bar{\rho} } \right) = -\frac{(n-3)}{\bar{\rho}^3}f + \frac{(n+m-2)r}{2\bar{\rho}^2}.
\end{equation}
\end{proposition}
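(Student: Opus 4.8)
The plan is to compute $\bar\square(f\bar\rho^{-1})$ directly by expanding the product with the Leibniz rule for the wave operator, namely
\[
\bar\square\!\left(\frac{f}{\bar\rho}\right) = f\,\bar\square\!\left(\frac{1}{\bar\rho}\right) + \frac{1}{\bar\rho}\,\bar\square f + 2\,\bar\nabla^\alpha f\,\bar\nabla_\alpha\!\left(\frac{1}{\bar\rho}\right),
\]
and then collecting terms. The factor $\bar\square f$ is already known from \eqref{eq.f_box_wp}: it equals $\tfrac{n+m}{2} + \tfrac{(n-1)\varepsilon f}{\bar\rho}$. So the work reduces to understanding $\bar\rho = r + 2\varepsilon f$ as a function on the warped geometry — that is, computing $\bar\nabla^\sharp\bar\rho$, $\bar\nabla^\alpha\bar\rho\,\bar\nabla_\alpha\bar\rho$, and $\bar\square\bar\rho$ — and then applying the chain rule $\bar\square(1/\bar\rho) = -\bar\rho^{-2}\bar\square\bar\rho + 2\bar\rho^{-3}\bar\nabla^\alpha\bar\rho\,\bar\nabla_\alpha\bar\rho$ together with $\bar\nabla^\alpha f\,\bar\nabla_\alpha(1/\bar\rho) = -\bar\rho^{-2}\,\bar\nabla^\alpha f\,\bar\nabla_\alpha\bar\rho$.

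First I would handle $r$. Since $r$ depends only on $(u,v)$ through $r = v - u$ (from \eqref{eq.uv}), its gradient is $\bar\nabla^\sharp r = \bar g^{uv}\partial_v r\,\partial_u + \bar g^{vu}\partial_u r\,\partial_v = -\tfrac12(\partial_u\cdot 1) \cdots$; carefully, $\partial_u r = -1$, $\partial_v r = 1$, so $\bar\nabla^\sharp r = \tfrac12(\partial_u + \partial_v)\cdot(\text{sign bookkeeping})$, giving $\bar\nabla^\alpha r\,\bar\nabla_\alpha r = 1$. For $\bar\square r$ I would use the Christoffel symbols of Lemma~\ref{lemma_Christ}: only the $\bar\Gamma^{u,v}_{ab}$ and $\bar\Gamma^{u,v}_{CD}$ contribute, yielding $\bar\square r = \bar g^{ab}(-\bar\Gamma^u_{ab}\partial_u r - \bar\Gamma^v_{ab}\partial_v r) + \bar g^{CD}(-\bar\Gamma^u_{CD}\partial_u r - \bar\Gamma^v_{CD}\partial_v r)$, which after substituting the explicit symbols collapses to an expression of the form $\tfrac{n-1}{\bar\rho}\cdot(\text{something linear in }u,v) + \tfrac{m-1}{\tau}(\text{something})$; I expect the temporal piece to simplify because $\bar\Gamma^u_{CD} = \bar\Gamma^v_{CD}$ and $u+v = \tau$. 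Then $\bar\rho = r + 2\varepsilon f$ is assembled using linearity of $\bar\nabla$ and $\bar\square$ together with \eqref{eq.f_grad_wp}–\eqref{eq.f_box_wp}; in particular $\bar\nabla^\alpha f\,\bar\nabla_\alpha r$, $\bar\nabla^\alpha r\,\bar\nabla_\alpha r = 1$, and $\bar\nabla^\alpha f\,\bar\nabla_\alpha f = f$ combine to give $\bar\nabla^\alpha\bar\rho\,\bar\nabla_\alpha\bar\rho$ and $\bar\nabla^\alpha f\,\bar\nabla_\alpha\bar\rho$ as explicit rational functions of $r, f, \bar\rho$.

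The main obstacle I anticipate is purely the bookkeeping: after plugging everything into the Leibniz expansion one gets a sum of several rational terms in $r$, $f$, and $\bar\rho = r + 2\varepsilon f$, and the claimed answer $-\tfrac{(n-3)}{\bar\rho^3}f + \tfrac{(n+m-2)r}{2\bar\rho^2}$ has no explicit $\varepsilon$, so there must be substantial cancellation — every $\varepsilon$ must be absorbed into $\bar\rho$. The key algebraic identity driving this is $\bar\rho - r = 2\varepsilon f$, which lets one trade $\varepsilon f$ for $\tfrac12(\bar\rho - r)$ repeatedly; I would use this to eliminate $\varepsilon$ systematically and then verify the numerator collapses to the stated form. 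A useful consistency check is to set $\varepsilon = 0$ (so $\bar\rho = r$), where the formula reads $\bar\square(f/r) = -(n-3)f/r^3 + (n+m-2)/(2r)$, which one can verify independently from the flat metric \eqref{eq.gmetric} using $\square f = \tfrac{n+m}{2}$, $\square r = \tfrac{n-1}{r} + \tfrac{\text{(temporal)}}{\tau}$, and $\nabla^\alpha f\nabla_\alpha r = \cdots$; matching this special case pins down the computation and guards against sign errors in the temporal Christoffel contributions.
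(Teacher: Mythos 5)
Your proposal is correct, and it takes a genuinely different route from the paper. The paper computes $\bar\square(f/\bar\rho)$ directly in null coordinates: it records $\partial_u(f/\bar\rho)=-v^2/\bar\rho^2$, $\partial_v(f/\bar\rho)=u^2/\bar\rho^2$, then $\partial_u\partial_v(f/\bar\rho)=-2f/\bar\rho^3$, and finally computes the spatial and temporal angular Hessian blocks $\bar g^{ab}\bar\nabla_{ab}(f/\bar\rho)$ and $\bar g^{CD}\bar\nabla_{CD}(f/\bar\rho)$ term by term from the Christoffel symbols of Lemma~\ref{lemma_Christ}. You instead pre-process the quotient with the Leibniz and chain rules, reducing everything to a small dictionary of scalar invariants: the already-known $\bar\square f$ and $\bar\nabla^\alpha f\bar\nabla_\alpha f=f$ from Proposition~\ref{thm.f_deriv_wp}, plus $\bar\nabla^\alpha r\bar\nabla_\alpha r=1$, $\bar\nabla^\alpha f\bar\nabla_\alpha r=r/2$, and $\bar\square r=(n-1)(1+\varepsilon r)/\bar\rho$ (the temporal Christoffel contribution to $\bar\square r$ vanishes exactly, since $\bar\Gamma^u_{CD}=\bar\Gamma^v_{CD}$ while $\partial_u r=-\partial_v r$, so your hedge resolves in the clean direction). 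From these, $\bar\square\bar\rho=(n-1)/\bar\rho+(2n+m-1)\varepsilon$ and $\bar\nabla^\alpha\bar\rho\,\bar\nabla_\alpha\bar\rho=1+2\varepsilon\bar\rho$ follow by linearity, and assembling the Leibniz expansion and repeatedly trading $2\varepsilon f=\bar\rho-r$ does collapse the numerator to $-(n-3)f/\bar\rho^3+(n+m-2)r/(2\bar\rho^2)$, as you anticipated. The two approaches are comparable in length; the paper's is more self-contained and mirrors the style of the surrounding Propositions, while yours isolates $\bar\rho$ as the one genuinely new object to understand and reuses the invariants for $f$ that have already been established, which makes the role of the identity $\bar\rho-r=2\varepsilon f$ (and why $\varepsilon$ must disappear) conceptually clearer.
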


\begin{proof}
The (null coordinate) derivatives are as follows:
\begin{equation}
\label{eq.f_rho_deriv_wp} \partial_u \left( \frac{f}{ \bar{\rho} } \right) = -\frac{v^2}{\bar{\rho}^2}, \quad \partial_v \left( \frac{f}{ \bar{\rho} } \right) = \frac{u^2}{\bar{\rho}^2}.
\end{equation}
The second derivative is
\[
\partial_u \partial_v \left( \frac{f}{ \bar{\rho} } \right) = \frac{ 2 u ( v - u + 2 \varepsilon f ) }{ \bar{\rho}^3 } + \frac{ 2 u^2 ( 1 + 2 \varepsilon v ) }{ \bar{\rho}^3 } = \frac{ 2 u v }{ \bar{\rho}^3 } = - \frac{ 2 f }{ \bar{\rho}^3 } \text{.}
\]
For the angular components, we have
\begin{align*}
\bar{g}^{ab} \bar{\nabla}_{ab} \left( \frac{f}{ \bar{\rho} } \right) = - \bar{g}^{ab} \left[ \bar{\Gamma}^u_{ab} \partial_u \left( \frac{f}{ \bar{\rho} } \right) + \bar{\Gamma}^v_{ab} \partial_v \left( \frac{f}{ \bar{\rho} } \right) \right] = \frac{ n - 1 }{ 2 \bar{\rho}^3 } ( \bar{\rho}^2 - 2 f - 2 \varepsilon f \bar{\rho} ),
\end{align*}
and that
\begin{align*}
\bar{g}^{CD} \bar{\nabla}_{CD} \left( \frac{f}{ \bar{\rho} } \right) &= - \bar{g}^{CD} \left[ \bar{\Gamma}^u_{CD} \partial_u \left( \frac{f}{ \bar{\rho} } \right) + \bar{\Gamma}^v_{CD} \partial_v \left( \frac{f}{ \bar{\rho} } \right) \right] \\
& = \frac{(m-1)}{2\tau} \left( \frac{v^2}{\bar{\rho}^2} - \frac{u^2}{\bar{\rho}^2} \right) \\
& = \frac{(m-1)(v+u) (v-u) }{2 \tau \bar{\rho}^2} \\
& = \frac{(m-1)r}{2 \bar{\rho}^2}.
\end{align*}
Now we can calculate
\begin{align*}
\bar\square\bigg( \frac{f}{\bar{\rho}}\bigg) & = - \partial_u \partial_v \left( \frac{f}{ \bar{\rho} } \right) + \bar{g}^{ab} \bar{\nabla}_{ab} \left( \frac{f}{ \bar{\rho} } \right) + \bar{g}^{CD} \bar{\nabla}_{CD} \left( \frac{f}{ \bar{\rho} } \right) = -\frac{(n-3)}{\bar{\rho}^3}f + \frac{(n+m-2) r}{2\bar{\rho}^2}\text{.} \qedhere
\end{align*}
\end{proof}
\noindent The next proposition gives us bounds for the coordinate functions \(u,v\) and the function \(f\). 
\begin{proposition} \label{thm.uvf_bound}
On the region $\mf{D} := \{f>0\} $, the following are satisfied:
\begin{equation}
\label{eq.uvf_bound} 0 < - u < r \text{,} \qquad 0 < v < r \text{,} \qquad 0 < f < r^2 \text{.}
\end{equation}
If $\varepsilon \geq 0$, then we also have the following inequality on $\mf{D}$:
\begin{equation}
\label{eq.rho_est_wp} f^\frac{1}{2} < \bar{\rho} \text{.}
\end{equation}
\end{proposition}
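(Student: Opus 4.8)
The plan is to reduce everything to the definitions \eqref{eq.uv} and \eqref{eq.f} together with the single elementary observation that on $\mf{D}$ one has $r > \tau \geq 0$. Indeed, $f = \tfrac14(r^2-\tau^2)>0$ combined with $r,\tau\geq 0$ forces $r>\tau\geq 0$; in particular $r>0$ on $\mf{D}$, so the (otherwise coordinate-singular) locus $\{r=0\}$ plays no role, and in any case $u,v,f,r,\tau$ are globally defined functions on $\R^{m+n}$.

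First I would establish the bounds on $v$ and $-u$. From $v=\tfrac12(\tau+r)$ and $0\leq\tau<r$ we get $v>0$ and $v<\tfrac12(r+r)=r$, hence $0<v<r$. Likewise $u=\tfrac12(\tau-r)<0$, so $-u=\tfrac12(r-\tau)>0$, and since $\tau\geq 0$ we have $-u\leq \tfrac12 r<r$, hence $0<-u<r$. Then, using $f=-uv=(-u)\,v$, the quantity $f$ is a product of two numbers each lying in the open interval $(0,r)$, so $0<f<r^2$. This is exactly \eqref{eq.uvf_bound}.

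For \eqref{eq.rho_est_wp} I would invoke \eqref{eq.rho_wp}: when $\varepsilon\geq 0$ and $f>0$, we have $\bar{\rho}=r+2\varepsilon f\geq r$. On the other hand $0<f<r^2$ gives $f^{1/2}<r$ by monotonicity of the square root on the positive reals. Chaining the two inequalities yields $f^{1/2}<r\leq\bar{\rho}$, as claimed.

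The argument is entirely elementary, and I do not expect any genuine obstacle; the only point requiring a little care is the sign bookkeeping — in particular recording that $u<0$ on $\mf{D}$, which is what makes $f=-uv>0$ consistent. One may note that this proposition merely packages, in the form used later in the Carleman weight estimates, the inequalities already anticipated in Remark \ref{rmk.uvfbound}.
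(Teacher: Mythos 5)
Your proof is correct and follows essentially the same route as the paper: the first chain of inequalities drops straight out of the definitions \eqref{eq.uv}, \eqref{eq.f}, \eqref{eq.D}, and the second is the observation $f^{1/2} < r \leq \bar{\rho}$ (the paper phrases this as $f < r^2 = (\bar{\rho}-2\varepsilon f)^2$ and takes a square root, which is the same computation). You have simply spelled out the elementary steps that the paper leaves implicit.
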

\begin{proof}
Using \eqref{eq.uv}, \eqref{eq.f}, and \eqref{eq.D} shows that \eqref{eq.uvf_bound} is true. Next, due to \eqref{eq.rho_wp} 
\begin{align*}
f < r^2 = (\bar{\rho} - 2 \varepsilon f )^2.
\end{align*}
Since \( \varepsilon \geq 0\), and \( f > 0 \) on \(\mf{D}\), this proves \eqref{eq.rho_est_wp}.
\end{proof}

\begin{definition}
On \(\mf{D}\), define vector fields as follows
\begin{align}
\label{eq.TN} T := \frac{1}{2} f^{-\frac{1}{2}} ( - u \partial_u + v \partial_v ) \text{,} \qquad N := \frac{1}{2} f^{-\frac{1}{2}} ( u \partial_u + v \partial_v ).
\end{align}
\end{definition}

\begin{proposition}\label{thm.TN_wp}
\(N\) is everywhere normal to the level sets of \(f\), and \(T\) is everywhere tangent to the level sets of \(f\). We also have
\begin{equation}
\label{eq.f_TN_wp} \bar{\nabla}_{T a} f = \bar{\nabla}_{N a} f = \bar{\nabla}_{T N} f \equiv 0 \text{,} \qquad \bar{\nabla}_{T T} f = - \frac{1}{2} \text{,} \qquad \bar{\nabla}_{N N} f = \frac{1}{2} \text{.}
\end{equation}
\end{proposition}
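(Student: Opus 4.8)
The statement has two parts: the normal/tangent claim for $N$ and $T$, and the Hessian identities \eqref{eq.f_TN_wp}. For the first part, recall from \eqref{eq.f_grad_wp} that $\bar\nabla^\sharp f = \frac12(u\partial_u + v\partial_v)$, so $N = f^{-1/2}\,\bar\nabla^\sharp f$; since the gradient is by definition normal to the level sets of $f$, this already shows $N$ is normal, and moreover $\bar g(N,N) = f^{-1}\,\bar\nabla^\alpha f\bar\nabla_\alpha f = 1$ using the second identity in \eqref{eq.f_grad_wp}, so $N$ is the unit normal. For $T$, I would simply compute $\bar g(T, \bar\nabla^\sharp f)$: writing $T = \frac12 f^{-1/2}(-u\partial_u + v\partial_v)$ and using $\bar g^{uv} = -\tfrac12$, one gets $\bar g(T,\bar\nabla^\sharp f) = \frac12 f^{-1/2}\cdot\frac12\cdot[(-u)\cdot(-\tfrac12)\cdot v\cdot 2 \text{-type terms}]$ — concretely $Tf = \frac12 f^{-1/2}(-u\,\partial_u f + v\,\partial_v f) = \frac12 f^{-1/2}(-u(-v)+v(-u)) = \frac12 f^{-1/2}(uv - uv) = 0$, so $T$ is tangent to the level sets. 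I would also record $\bar g(T,T) = -1$ and $\bar g(N,T)=0$ for later use, which follow from the same $\bar g^{uv}=-\tfrac12$ computation.

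For the Hessian identities, the efficient route is to expand each expression bilinearly in the coordinate vector fields $\partial_u,\partial_v$ and use the components of $\bar\nabla^2 f$ from \eqref{eq.f_hess_wp}, namely $\bar\nabla_{uv}f = -1$, $\bar\nabla_{uu}f = \bar\nabla_{vv}f = 0$ (the latter because $\bar\Gamma^\alpha_{uu}=\bar\Gamma^\alpha_{vv}=0$, or directly from Lemma \ref{lemma_Christ}), $\bar\nabla_{ab}f = (\tfrac12 + \varepsilon f/\bar\rho)\bar g_{ab}$, and $\bar\nabla_{CD}f = \tfrac12\bar g_{CD}$. Then:
\begin{itemize}
\item $\bar\nabla_{Na}f$ and $\bar\nabla_{Ta}f$: here $N,T$ are combinations of $\partial_u,\partial_v$ while $\partial_a$ is angular, and there are no mixed components $\bar\nabla_{ua}f$ or $\bar\nabla_{va}f$ (the relevant Christoffel symbols $\bar\Gamma^\alpha_{ua}$, $\bar\Gamma^\alpha_{va}$ send $\partial_u f,\partial_v f$ but kill $\partial_a f = 0$; more simply $\bar\nabla^2 f$ has no $u$-$a$ or $v$-$a$ entries per \eqref{eq.f_hess_wp}), so both vanish.
\item $\bar\nabla_{TN}f$: expand as a bilinear form in $\partial_u,\partial_v$. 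The coefficient combination produces $\frac14 f^{-1}\big[(-u)(u)\bar\nabla_{uu}f + (-u)(v)\bar\nabla_{uv}f + (v)(u)\bar\nabla_{vu}f + (v)(v)\bar\nabla_{vv}f\big] = \frac14 f^{-1}\big[0 + (-uv)(-1) + (uv)(-1) + 0\big] = 0$.
\item $\bar\nabla_{TT}f = \frac14 f^{-1}\big[u^2\cdot 0 + (-uv)(-1)\cdot 2 + v^2\cdot 0\big]$ — wait, more carefully $\bar\nabla_{TT}f = \frac14 f^{-1}\big[u^2\bar\nabla_{uu}f - 2uv\,\bar\nabla_{uv}f + v^2\bar\nabla_{vv}f\big] = \frac14 f^{-1}(2uv) = \frac14 f^{-1}\cdot(-2f) = -\tfrac12$.
\item $\bar\nabla_{NN}f = \frac14 f^{-1}\big[u^2\bar\nabla_{uu}f + 2uv\,\bar\nabla_{uv}f + v^2\bar\nabla_{vv}f\big] = \frac14 f^{-1}(-2uv) = \frac14 f^{-1}\cdot 2f = \tfrac12$.
\end{itemize}

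I do not anticipate a genuine obstacle here — the proposition is a bookkeeping consequence of Proposition \ref{thm.f_deriv_wp} together with the coordinate expressions \eqref{eq.TN}. The only mild subtlety is sign discipline: $f = -uv$ with $u<0<v$ on $\mf{D}$ (Remark \ref{rmk.uvfbound}), so $uv = -f$, and one must keep the factor of $2$ on the cross term $\bar\nabla_{uv}f$ straight when expanding the symmetric bilinear form. A cleaner alternative for $\bar\nabla_{TT}f$ and $\bar\nabla_{NN}f$, which I would mention as a remark, is to use the last identity in \eqref{eq.f_box_wp}, $\bar\nabla^\alpha f\bar\nabla^\beta f\bar\nabla_{\alpha\beta}f = \tfrac12 f$: since $N = f^{-1/2}\bar\nabla^\sharp f$ this immediately gives $\bar\nabla_{NN}f = f^{-1}\cdot\tfrac12 f = \tfrac12$, and then $\bar\nabla_{TT}f$ follows from the trace relation $\bar\nabla_{TT}f - \bar\nabla_{NN}f + (\text{angular traces})$ balancing against $\bar\square f$ — though in $m,n \geq 2$ this requires also the angular Hessian components, so the direct expansion above is the most transparent and is what I would write out.
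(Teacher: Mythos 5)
Your proposal is correct and follows essentially the same route as the paper: $N$ is normal because it is a rescaling of $\bar\nabla^\sharp f$, tangency of $T$ follows from $Tf=0$, and each Hessian entry is obtained by expanding the symmetric bilinear form in $\partial_u,\partial_v$ against the components from \eqref{eq.f_hess_wp}. The only additions you make — explicitly recording $\bar\nabla_{uu}f=\bar\nabla_{vv}f=0$, noting $\bar g(N,N)=1$ and $\bar g(T,T)=-1$, and pointing out the shortcut $\bar\nabla_{NN}f = f^{-1}\bar\nabla^\alpha f\bar\nabla^\beta f\bar\nabla_{\alpha\beta}f = \tfrac12$ via the last identity of \eqref{eq.f_box_wp} — are harmless clarifications rather than a different argument.
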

\begin{proof}
We see that $N$ is normal to the level sets of $f$ because
\[
N = f^{ - \frac{1}{2} } \bar{\nabla}^\sharp f \text{.}
\]
Also, $T$ is tangent to the level sets of $f$ because
\[
T f = \frac{1}{2} f^{ - \frac{1}{2} } [ (-u) (-v) + v (-u) ] = 0 \text{.}
\]
Now, since $\bar{\nabla}_{u a} f = \bar{\nabla}_{v a} f = 0$ by \eqref{eq.f_hess_wp}, then
\(
\bar{\nabla}_{T a} f = \bar{\nabla}_{N a} f = 0 \text{.}
\)
We also have
\[ \bar{\nabla}_{T N} f = \frac{1}{4} f^{-1} [ (-u) v \bar{\nabla}_{u v} f + u v \bar{\nabla}_{u v} f ] = 0. \]
To check the other terms of $\bar{\nabla}^2 f$, we apply \eqref{eq.f_hess_wp} and \eqref{eq.TN}, to get
\begin{align*}
\bar{\nabla}_{T T} f &= \frac{1}{4} f^{-1} [ 2 (-u) v \bar{\nabla}_{u v} f ] = - \frac{1}{2} \text{,} \qquad \bar{\nabla}_{N N} f = \frac{1}{4} f^{-1} [ 2 u v \bar{\nabla}_{u v} f ] = \frac{1}{2} \text{,}
\end{align*}
which proves \eqref{eq.f_TN_wp}.
\end{proof}

\begin{definition}\label{def.pseudoconvex_wp}
Define the modified deformation tensor as follows
\begin{equation}\label{eq.pi_wp}
\bar\pi:= \bar{\nabla}^2f- \bar{h}\cdot\bar{g}, \qquad \bar{h}:=\frac{1}{2}+ \frac{\varepsilon f}{2\bar\rho}.
\end{equation}
Also, define the function \(\bar{w}\) as follows:
\begin{equation}\label{eq.w_wp}
\bar{w}:= \frac{1}{2}\bar{\square}f - \bar{h} = \frac{(n+m-2)}{4} + \frac{(n-2)\varepsilon f}{2\bar{\rho}}.
\end{equation}
\end{definition}

\begin{proposition} \label{thm.pseudoconvex_wp}
The non-zero components of $\bar{\pi}$ are
\begin{equation}
\label{eq.pseudoconvex_wp} \bar{\pi}_{T T} = \frac{ \varepsilon f }{ 2 \bar{\rho} } \text{,} \quad \bar{\pi}_{a b} = \frac{ \varepsilon f }{ 2 \bar{\rho} } \cdot \bar{g}_{a b}, \quad \bar{\pi}_{CD} = -\frac{ \varepsilon f }{ 2 \bar{\rho} } \cdot \bar{g}_{CD} \text{,} \quad \bar{\pi}_{N N} = - \frac{ \varepsilon f }{ 2 \bar{\rho} } \text{.}
\end{equation}
We also have the following wave equation for the quantity \(\bar{w}\)
\begin{equation}
\label{eq.w_box_wp} \bar{\Box} \bar{w} = - \frac{ (n - 2) \varepsilon }{ 2 \bar{\rho} } \left[ \frac{ (n - 3) f }{ \bar{\rho}^2 } - \frac{ (n + m - 2)r}{ 2 \bar{\rho} }  \right] \text{.}
\end{equation}
\end{proposition}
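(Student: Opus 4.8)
The plan is to establish the two assertions separately; each reduces to a short computation built on quantities already assembled in Propositions~\ref{thm.f_deriv_wp}, \ref{thm.f_rho_wp}, and~\ref{thm.TN_wp}.

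\emph{The components of $\bar{\pi}$.} I would work in the frame adapted to the level sets of $f$: the pair $\{N,T\}$ inside the $(u,v)$-plane, together with the spatial angular frame $\{\partial_a\}$ and the temporal angular frame $\{\partial_C\}$. By \eqref{eq.f_hess_wp} the tensor $\bar{\nabla}^2 f$ has only the components $\bar{\nabla}_{uv}f$, $\bar{\nabla}_{ab}f$, $\bar{\nabla}_{CD}f$ in the coordinate frame $(u,v,\omega_x,\omega_t)$, hence it is block diagonal with respect to the coarser splitting $\{N,T\}\oplus\{\partial_a\}\oplus\{\partial_C\}$ as well, since $N$ and $T$ lie in the plane spanned by $\partial_u$ and $\partial_v$; the metric $\bar{g}$ is block diagonal in the same sense by \eqref{eq.g_wp}. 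Consequently the only entries of $\bar{\pi}=\bar{\nabla}^2 f-\bar{h}\,\bar{g}$ that can be nonzero are $\bar{\pi}_{TT}$, $\bar{\pi}_{NN}$, $\bar{\pi}_{TN}$, $\bar{\pi}_{ab}$, $\bar{\pi}_{CD}$, and all mixed entries vanish automatically. Next I would record the inner products of $T$ and $N$: substituting \eqref{eq.TN} into \eqref{eq.g_wp} and using $uv=-f$ from \eqref{eq.f} gives $\bar{g}(T,T)=-1$, $\bar{g}(N,N)=1$, $\bar{g}(T,N)=0$. Combining these with $\bar{\nabla}_{TT}f=-\tfrac12$, $\bar{\nabla}_{NN}f=\tfrac12$, $\bar{\nabla}_{TN}f=0$ from \eqref{eq.f_TN_wp} yields
\[
\bar{\pi}_{TT}=-\tfrac12+\bar{h}=\frac{\varepsilon f}{2\bar{\rho}},\qquad \bar{\pi}_{NN}=\tfrac12-\bar{h}=-\frac{\varepsilon f}{2\bar{\rho}},\qquad \bar{\pi}_{TN}=0 .
\]
Finally, for the angular blocks \eqref{eq.f_hess_wp} gives $\bar{\nabla}_{ab}f=(\tfrac12+\tfrac{\varepsilon f}{\bar{\rho}})\bar{g}_{ab}$ and $\bar{\nabla}_{CD}f=\tfrac12\bar{g}_{CD}$, so subtracting $\bar{h}\,\bar{g}=(\tfrac12+\tfrac{\varepsilon f}{2\bar{\rho}})\bar{g}$ produces $\bar{\pi}_{ab}=\tfrac{\varepsilon f}{2\bar{\rho}}\bar{g}_{ab}$ and $\bar{\pi}_{CD}=-\tfrac{\varepsilon f}{2\bar{\rho}}\bar{g}_{CD}$, which together with the previous display is \eqref{eq.pseudoconvex_wp}.

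\emph{The wave equation for $\bar{w}$.} Here I would start from the closed form $\bar{w}=\tfrac{n+m-2}{4}+\tfrac{(n-2)\varepsilon}{2}\cdot\tfrac{f}{\bar{\rho}}$ recorded in \eqref{eq.w_wp}. Since $\bar{\square}$ is linear and annihilates constants, $\bar{\square}\bar{w}=\tfrac{(n-2)\varepsilon}{2}\,\bar{\square}(f/\bar{\rho})$; inserting the value of $\bar{\square}(f/\bar{\rho})$ from \eqref{eq.f_rho_box_wp} and pulling out a common factor $\bar{\rho}^{-1}$ gives exactly \eqref{eq.w_box_wp}.

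There is no genuine obstacle; the argument is purely algebraic once the earlier propositions are available. The one point that needs care is conceptual rather than computational: the enumeration in \eqref{eq.pseudoconvex_wp} is relative to the $f$-adapted frame $\{N,T,\partial_a,\partial_C\}$ and not to the coordinate frame $(u,v,\omega_x,\omega_t)$ — indeed $\bar{\pi}_{uv}=\bar{\nabla}_{uv}f-\bar{h}\,\bar{g}_{uv}=-1+2\bar{h}=\tfrac{\varepsilon f}{\bar{\rho}}\neq0$ — so I would make this frame explicit and verify $\bar{g}(T,T)$, $\bar{g}(N,N)$, $\bar{g}(T,N)$ and the vanishing of all mixed components before reading off the stated expressions.
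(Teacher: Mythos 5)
Your proposal is correct and follows essentially the same route as the paper's own proof: both read off $\bar{\pi}_{TT},\bar{\pi}_{NN},\bar{\pi}_{ab},\bar{\pi}_{CD}$ from Propositions~\ref{thm.f_deriv_wp} and~\ref{thm.TN_wp} together with $\bar{g}(T,T)=-1$, $\bar{g}(N,N)=1$, and both obtain~\eqref{eq.w_box_wp} by applying $\bar{\square}$ to the closed form~\eqref{eq.w_wp} and substituting~\eqref{eq.f_rho_box_wp}. You are somewhat more explicit than the paper in two places — you state the inner products $\bar{g}(T,T)$, $\bar{g}(N,N)$, $\bar{g}(T,N)$ rather than using them silently, and you derive the vanishing of the mixed components from block-diagonality of $\bar{\nabla}^2 f$ and $\bar{g}$ in the coordinate frame whereas the paper cites~\eqref{eq.f_TN_wp} together with orthogonality of $T,N,\partial_a$; these are minor presentational differences, not a different argument. (Incidentally, your factor $\tfrac{(n-2)\varepsilon}{2}$ in front of $\bar{\square}(f/\bar{\rho})$ is the correct one; the paper's intermediate display omits the $\varepsilon$, though its final formula~\eqref{eq.w_box_wp} is right.)
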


\begin{proof}
We compute the following:
\[ \bar{\pi}_{CD} = \bar{\nabla}_{CD} f - \bar{h} \cdot \bar{g}_{CD} = \left( \frac{1}{2} - \frac{1}{2} - \frac{ \varepsilon f }{ 2 \bar{\rho} } \right) \bar{g}_{CD} = - \frac{ \varepsilon f }{ 2 \bar{\rho} } \cdot \bar{g}_{CD} .\]
Similarly, the rest of the equations in \eqref{eq.pseudoconvex_wp} can be checked 
\begin{align*}
\bar{\pi}_{T T} &= \bar{\nabla}_{T T} f + \bar{h} = - \frac{1}{2} + \frac{1}{2} + \frac{ \varepsilon f }{ 2 \bar{\rho} } = \frac{ \varepsilon f }{ 2 \bar{\rho} } \text{,} \\
\bar{\pi}_{N N} &= \bar{\nabla}_{N N} f - \bar{h} = \frac{1}{2} - \frac{1}{2} - \frac{ \varepsilon f }{ 2 \bar{\rho} } = - \frac{ \varepsilon f }{ 2 \bar{\rho} } \text{,} \\
\bar{\pi}_{a b} &= \bar{\nabla}_{a b} f - \bar{h} \cdot \bar{g}_{a b} = \left( \frac{1}{2} + \frac{ \varepsilon f }{ \bar{\rho} } - \frac{1}{2} - \frac{ \varepsilon f }{ 2 \bar{\rho} } \right) \bar{g}_{ab} = \frac{ \varepsilon f }{ 2 \bar{\rho} } \cdot \bar{g}_{ab} \text{,} 
\end{align*}
The other components are zero because of \eqref{eq.f_TN_wp} and the fact that \( \bar{g}(T,a) = \bar{g}(N,a) = \bar{g}(T,N) = 0 \).
Moreover, for \( \bar{w} \)
\[ \bar{\Box} \bar{w} = \frac{(n-2)}{2}\bar\square\bigg( \frac{f}{\bar{\rho}}\bigg),\]
then using \eqref{eq.f_rho_box_wp} completes the proof.
\end{proof}

\noindent Note that for \(\varepsilon > 0 \), we have \(\bar{\pi}_{T T}, \bar{\pi}_{a b} > 0\). This property of the tensor \( \bar{\pi} \) gives us the pseudoconvexity condition for the function \(f\). 

Now we show that the warped geometry is conformally isometric to the (original) pseudo-Riemannian geometry. This is helpful because it allows us to switch between the two geometries using the defined isometry.

\begin{definition} \label{def.conformal}
Let $R > 0$ and choose $\varepsilon \in \R$ such that $| \varepsilon | \ll_{m,n} R^{-1}$. Let \(\xi\) be the function defined as
\begin{equation}
\label{eq.conformal_factor} \xi := ( 1 + \varepsilon u ) ( 1 - \varepsilon v ) \text{.}
\end{equation}
Also, define the map
\[
\bar{\Phi}: \mf{D} \cap \{ r < R \} \rightarrow \mf{D} \text{,}
\]
in terms of null coordinates as follows:
\begin{align}
\label{eq.conformal} \bar{\Phi} ( u, v, \omega_x, \omega_t) &:= ( \bar{u} ( u, v, \omega_x, \omega_t ), \bar{v} ( u, v, \omega_x, \omega_t ), \bar{\omega}_x ( u, v, \omega_x, \omega_t ), \bar{\omega}_t (  u, v, \omega_x, \omega_t ) ) \\
\notag &:= ( u ( 1 + \varepsilon u )^{-1}, v ( 1 - \varepsilon v )^{-1}, \omega_x, \omega_t ) \text{.}
\end{align}
\end{definition}

\begin{remark}\label{rmk_Phi}
Note that, the map \(\bar{\Phi}\) only affects the null coordinates. It leaves the (spatial and temporal) angular coordinates unchanged.
\end{remark}

\begin{remark}\label{rmk_xirf}
We can express \(\xi\) in terms of \(r, f\) as follows:
\[ \xi = ( 1 + \varepsilon u ) ( 1 - \varepsilon v ) = 1 - \varepsilon v + \varepsilon u - \varepsilon^2 uv = 1 - \varepsilon r + \varepsilon^2 f. \]
\end{remark}

\begin{proposition} \label{thm.met_conf}
In the setting of Definition \ref{def.conformal}, we have:
\begin{itemize}
\item The functions \( \tau, f, \bar{\rho}\) satisfy
\begin{equation}
\label{eq.rf_conf} \tau \circ \bar{\Phi} = \xi^{-1} \tau, \qquad f \circ \bar{\Phi} = \xi^{-1} f \text{,} \qquad \bar{\rho} \circ \bar{\Phi} = \xi^{-1} r.
\end{equation}
\item $\bar{\Phi}$ is a conformal isometry between $\mf{D} \cap \{ r < R \}$ and an open subset of $\mf{D}$. Specifically,
\begin{equation}
\label{eq.met_conf} \bar{\Phi}_\ast \bar{g} = \xi^{-2} g |_{ \mf{D} \cap \{ r < R \} } \text{,}
\end{equation}
where $\bar{\Phi}_\ast \bar{g} $ denotes the pull-back of $\bar{g}$ through $\bar{\Phi}$.
\end{itemize}
\end{proposition}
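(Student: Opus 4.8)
The plan is to verify everything directly in null coordinates, since by Remark \ref{rmk_Phi} the map $\bar{\Phi}$ only touches $(u,v)$ and fixes the angular variables. First I would establish \eqref{eq.rf_conf}. The computation $\bar{u}\bar{v} = uv\,(1+\varepsilon u)^{-1}(1-\varepsilon v)^{-1} = -f\xi^{-1}$ gives $f\circ\bar{\Phi} = -\bar{u}\bar{v} = \xi^{-1}f$ immediately (using $f = -uv$). For $\bar\rho\circ\bar{\Phi}$, recall $\bar\rho = r + 2\varepsilon f = (v-u) + 2\varepsilon f$; evaluating at $\bar{\Phi}$ gives $\bar{v}-\bar{u} + 2\varepsilon(f\circ\bar{\Phi})$, and a short manipulation of $\bar{v}-\bar{u} = v(1-\varepsilon v)^{-1} - u(1+\varepsilon u)^{-1}$ over the common denominator $\xi$ yields $\xi^{-1}(v - u + \varepsilon^2 uv\cdot(\text{stuff})) $; combined with the $2\varepsilon f\xi^{-1}$ term and Remark \ref{rmk_xirf} (i.e.\ $\xi = 1 - \varepsilon r + \varepsilon^2 f$), this collapses to $\xi^{-1}r$. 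For $\tau\circ\bar{\Phi}$: since $\tau = u + v$ and $\bar{u}+\bar{v} = u(1+\varepsilon u)^{-1} + v(1-\varepsilon v)^{-1}$, putting over the denominator $\xi$ gives $\xi^{-1}[u(1-\varepsilon v) + v(1+\varepsilon u)] = \xi^{-1}(u+v) = \xi^{-1}\tau$.

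Next, for \eqref{eq.met_conf}, I would compute the pullback termwise against the expression \eqref{eq.g_wp}, $\bar{g} = -4\,d\bar{u}\,d\bar{v} + \bar\rho^2\mathring\gamma_{\mathbb{S}^{n-1}} - \tau^2\mathring\gamma_{\mathbb{S}^{m-1}}$, pulled back through $\bar\Phi$. The angular pieces are easy: $\bar\Phi$ fixes $\omega_x,\omega_t$, so $\bar\Phi^*(\bar\rho^2\mathring\gamma_{\mathbb{S}^{n-1}}) = (\bar\rho\circ\bar\Phi)^2\mathring\gamma_{\mathbb{S}^{n-1}} = \xi^{-2}r^2\mathring\gamma_{\mathbb{S}^{n-1}}$ by \eqref{eq.rf_conf}, and similarly $\bar\Phi^*(\tau^2\mathring\gamma_{\mathbb{S}^{m-1}}) = \xi^{-2}\tau^2\mathring\gamma_{\mathbb{S}^{m-1}}$, which match the $r^2\mathring\gamma_{\mathbb{S}^{n-1}}$ and $-\tau^2\mathring\gamma_{\mathbb{S}^{m-1}}$ pieces of $g$ (see \eqref{eq.psr_met}) up to the factor $\xi^{-2}$. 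The content is in the $d\bar{u}\,d\bar{v}$ term: I would compute $d\bar{u} = (1+\varepsilon u)^{-2}du$ and $d\bar{v} = (1-\varepsilon v)^{-2}dv$ from \eqref{eq.conformal}, so that $-4\,d\bar{u}\,d\bar{v} = -4(1+\varepsilon u)^{-2}(1-\varepsilon v)^{-2}\,du\,dv = \xi^{-2}\cdot(-4\,du\,dv)$, which is exactly $\xi^{-2}$ times the $-4\,du\,dv$ piece of $g$ in \eqref{eq.psr_met}. Summing the three contributions gives $\bar\Phi^*\bar g = \xi^{-2}g$ on $\mf{D}\cap\{r<R\}$.

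Two small points need attention rather than real difficulty. First, one should check that $\bar\Phi$ is a well-defined diffeomorphism onto its image on $\mf{D}\cap\{r<R\}$: the maps $u\mapsto u(1+\varepsilon u)^{-1}$ and $v\mapsto v(1-\varepsilon v)^{-1}$ are smooth with nonvanishing derivative provided $|\varepsilon u|, |\varepsilon v| < 1$, which holds on $\{r<R\}\cap\mf{D}$ because $|u|, v < r < R$ (Remark \ref{rmk.uvfbound}) and $|\varepsilon|\ll_{m,n}R^{-1}$; this also keeps $\xi$ bounded away from $0$, so the conformal factor $\xi^{-2}$ is legitimate. Second, the computation is genuinely only valid on $\{\tau\neq 0,\ r\neq 0\}$ where the polar/null coordinates are defined, but since both $\bar g$ and $g$ extend smoothly across $\{\tau=0\}$ (as noted after \eqref{eq.rho_wp}) and $\{r=0\}$ is excluded from the domain, the identity \eqref{eq.met_conf} extends by continuity. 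The main "obstacle," such as it is, is purely bookkeeping: carefully carrying out the algebra for $\bar\rho\circ\bar\Phi = \xi^{-1}r$, where one must use the identity $\xi = 1-\varepsilon r + \varepsilon^2 f$ to see the cancellation — everything else is a one-line verification.
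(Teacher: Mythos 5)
Your proof is correct and takes essentially the same approach as the paper: establish \eqref{eq.rf_conf} by direct computation in null coordinates using $\tau = u+v$, $f = -uv$, $\bar\rho = r + 2\varepsilon f$, then pull back the metric term by term via $\bar\Phi_*(du) = (1+\varepsilon u)^{-2}\,du$, $\bar\Phi_*(dv) = (1-\varepsilon v)^{-2}\,dv$. One small inaccuracy worth flagging: for $\bar\rho\circ\bar\Phi$ the numerator of $\bar v - \bar u$ over the common denominator $\xi$ is exactly $v(1+\varepsilon u) - u(1-\varepsilon v) = (v-u) + 2\varepsilon uv = r - 2\varepsilon f$ (no $\varepsilon^2$ term appears), so adding $2\varepsilon(f\circ\bar\Phi) = 2\varepsilon f\,\xi^{-1}$ yields $\xi^{-1}r$ immediately — the identity $\xi = 1-\varepsilon r + \varepsilon^2 f$ from Remark \ref{rmk_xirf} is not actually needed at this step, so it is not the ``main obstacle'' you identify. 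Your added observations (that $\bar\Phi$ is a diffeomorphism onto its image because $|\varepsilon u|, |\varepsilon v| < 1$ on $\mf{D}\cap\{r<R\}$, and that the identity extends across $\{\tau=0\}$ by smoothness) are correct and are left implicit in the paper's proof.
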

\begin{proof}
For \(\tau \circ \bar{\Phi}\), we have
\begin{align*}
\tau \circ \bar{\Phi} = u \circ \bar{\Phi} + v \circ \bar{\Phi} = \frac{u}{(1+ \varepsilon u)} + \frac{v}{(1-\varepsilon v)} = \frac{u+v}{(1+\varepsilon u)(1- \varepsilon v)} = \xi^{-1} \tau.
\end{align*}
Similarly, we get
\begin{align*}
f \circ \bar{\Phi} = (- uv) \circ \bar{\Phi} =  \frac{- uv}{(1+\varepsilon u)(1-\varepsilon v)} = \xi^{-1} f \text{,} \qquad \bar{\rho} \circ \bar{\Phi} = (r+2\varepsilon f)\circ \bar{\Phi} = \xi^{-1} r.
\end{align*}
We calculate the pull-back of the one forms \(du, dv\), and check that they satisfy
\[
\bar{\Phi}_\ast ( d u ) = ( 1 + \varepsilon u )^{-2} du \text{,} \qquad \bar{\Phi}_\ast ( d v ) = ( 1 - \varepsilon v )^{-2} dv \text{.}
\]
The proof is completed after using \eqref{eq.rf_conf} and Remark \ref{rmk_Phi}, to get
\begin{align*}
\bar{\Phi}_\ast \bar{g} &= \bar{\Phi}_\ast ( - 4 du dv + \bar{\rho}^2 \mathring{\gamma}_{\mathbb{S}^{n-1}} - \tau^2 \mathring{\gamma}_{\mathbb{S}^{m-1}}  ) \\
& = - \frac{ 4 du dv }{ ( 1 + \varepsilon u )^2 ( 1 - \varepsilon v )^2 } + ( \bar{\rho} \circ \bar{\Phi} )^2 \mathring{\gamma}_{\mathbb{S}^{n-1}} - (\tau \circ \bar{\Phi})^2 \mathring{\gamma}_{\mathbb{S}^{m-1}}  \\
& = \xi^{-2} ( - 4 du dv + r^2 \mathring{\gamma}_{\mathbb{S}^{n-1}} + \tau^2 \mathring{\gamma}_{\mathbb{S}^{m-1}} ). \qedhere
\end{align*}
\end{proof}

The map \( \bar{\Phi} \) does not alter the geometric quantities by a lot, as can be seen from the next two Propositions.
\begin{proposition} \label{thm.uvf_comp}
Assume the setting of Definition \ref{def.conformal}.
Then:
\begin{itemize}
\item On $\mf{D} \cap \{ r < R \}$, we have the following:
\begin{align}
\label{eq.conf_comp} ( 1 + \varepsilon u )^{m+n} \simeq 1 \text{,}& \qquad ( 1 - \varepsilon v )^{m+n} \simeq 1 \text{,} \qquad \xi^{m+n} \simeq 1 \text{;}\\
\label{eq.uvf_comp} - ( u \circ \bar{\Phi} ) \simeq - u \text{,}& \qquad v \circ \bar{\Phi} \simeq v \text{,} \qquad \qquad \quad f \circ \bar{\Phi} \simeq f \text{;}\\
\label{eq.conf_d_comp} | \partial_u \xi | \simeq \varepsilon \text{,}& \qquad | \partial_v \xi | \simeq \varepsilon \text{.}
\end{align}
\item Let $ W \subseteq \mf{D} \cap \{ r < R \}$ be open and $\bar{z} \in C^1 ( W )$. If we let \(z = \bar{z} \circ \bar{\Phi}\), then we have
\begin{equation}
\label{eq.deriv_comp} | \partial_u \bar{z} | \simeq | \partial_u z | \text{,} \quad | \partial_v \bar{z} | \simeq | \partial_v z | \text{,} \quad \bar{g}^{ab} \bar{\nasla}_a \bar{z} \bar{\nasla}_b \bar{z} \simeq g^{ab} \nasla_a z \nasla_b z, \quad \bar{g}^{CD} \hat{\nabla}_C \bar{z} \hat{\nabla}_D \bar{z} \simeq g^{CD} \tilde{\nabla}_C z \tilde{\nabla}_D z .
\end{equation}
\end{itemize}
\end{proposition}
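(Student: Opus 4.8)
The plan is to reduce everything to a single quantitative observation. On $\mf{D} \cap \{ r < R \}$ one has $-u < r < R$ and $v < r < R$ by \eqref{eq.uvf_bound}, hence $|\varepsilon u| \leq |\varepsilon| R \ll_{m,n} 1$ and $|\varepsilon v| \leq |\varepsilon| R \ll_{m,n} 1$. Consequently $1 + \varepsilon u$ and $1 - \varepsilon v$ both lie within $\ll_{m,n} 1$ of $1$, and so does their product $\xi = ( 1 + \varepsilon u )( 1 - \varepsilon v )$; in particular all three are pinched between two positive constants depending only on $m, n$, and raising them to the power $m + n$ preserves this. That is \eqref{eq.conf_comp}, and it drives the rest of the proof.

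Next I would derive \eqref{eq.uvf_comp} and \eqref{eq.conf_d_comp} by direct computation. From the definition \eqref{eq.conformal} of $\bar{\Phi}$ one reads off $u \circ \bar{\Phi} = u ( 1 + \varepsilon u )^{-1}$ and $v \circ \bar{\Phi} = v ( 1 - \varepsilon v )^{-1}$, so \eqref{eq.conf_comp} immediately gives $-( u \circ \bar{\Phi} ) \simeq -u$ and $v \circ \bar{\Phi} \simeq v$; the comparison $f \circ \bar{\Phi} \simeq f$ then follows from $f \circ \bar{\Phi} = \xi^{-1} f$ in \eqref{eq.rf_conf}. For \eqref{eq.conf_d_comp} I would just differentiate $\xi$, getting $\partial_u \xi = \varepsilon ( 1 - \varepsilon v )$ and $\partial_v \xi = -\varepsilon ( 1 + \varepsilon u )$, and conclude using \eqref{eq.conf_comp} that the remaining factors are comparable to $1$.

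The derivative comparisons \eqref{eq.deriv_comp} for $z = \bar{z} \circ \bar{\Phi}$ I would handle using that $\bar{\Phi}$ fixes the angular coordinates (Remark \ref{rmk_Phi}) and acts on the null coordinates exactly as recorded in the proof of Proposition \ref{thm.met_conf}, namely $\bar{\Phi}_\ast ( du ) = ( 1 + \varepsilon u )^{-2}\,du$ and $\bar{\Phi}_\ast ( dv ) = ( 1 - \varepsilon v )^{-2}\,dv$. Since $\bar{u}$ depends only on $u$ and $\bar{v}$ only on $v$, the chain rule gives $\partial_u z = ( 1 + \varepsilon u )^{-2} ( \partial_u \bar{z} ) \circ \bar{\Phi}$ and $\partial_v z = ( 1 - \varepsilon v )^{-2} ( \partial_v \bar{z} ) \circ \bar{\Phi}$, whence $|\partial_u \bar{z}| \simeq |\partial_u z|$ and $|\partial_v \bar{z}| \simeq |\partial_v z|$ by \eqref{eq.conf_comp}. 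For the angular quadratic forms, the angular coordinate derivatives see only the angular dependence, which $\bar{\Phi}$ does not touch, so $\nasla_a z = ( \bar{\nasla}_a \bar{z} ) \circ \bar{\Phi}$ and $\tilde{\nabla}_C z = ( \hat{\nabla}_C \bar{z} ) \circ \bar{\Phi}$; combining $\bar{g}_{ab} = \bar{\rho}^2 \mathring{\gamma}_{ab}$, $g_{ab} = r^2 \mathring{\gamma}_{ab}$ with $\bar{\rho} \circ \bar{\Phi} = \xi^{-1} r$ (and the analogous $\tau \circ \bar{\Phi} = \xi^{-1} \tau$ on the temporal spheres) gives $\bar{g}^{ab} \circ \bar{\Phi} = \xi^{2} g^{ab}$ and $\bar{g}^{CD} \circ \bar{\Phi} = \xi^{2} g^{CD}$, so each pair of quadratic forms differs precisely by the positive factor $\xi^{2} \simeq 1$.

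I do not anticipate a genuine obstacle; the estimates are all elementary. The one point requiring care is the bookkeeping: distinguishing $\partial_u$ as a derivative in the source null coordinate from $\partial_u$ in the target (which is what produces the Jacobian factors $( 1 + \varepsilon u )^{-2}$, $( 1 - \varepsilon v )^{-2}$ rather than a power of $\xi$), and making sure the smallness $|\varepsilon| \ll_{m,n} R^{-1}$ is applied uniformly on $\mf{D} \cap \{ r < R \}$ so that every implied constant in $\simeq$ depends only on $m$ and $n$.
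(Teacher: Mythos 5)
Your proof is correct and follows essentially the same route the paper takes (the paper's own argument is very terse, pointing at the same equations \eqref{eq.conf_comp}, \eqref{eq.conformal}, \eqref{eq.rf_conf} without writing out the computations). The one detail worth keeping straight, which you handle correctly, is that the chain-rule Jacobians in \eqref{eq.deriv_comp} are $(1+\varepsilon u)^{-2}$ and $(1-\varepsilon v)^{-2}$ individually rather than a power of $\xi$, while the angular quadratic forms pick up the factor $\xi^2$ through $\bar{\rho} \circ \bar{\Phi} = \xi^{-1} r$ and $\tau \circ \bar{\Phi} = \xi^{-1} \tau$; both kinds of factors are absorbed by \eqref{eq.conf_comp}.
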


\begin{proof} 
The first two estimates in \eqref{eq.conf_comp} hold because \( \varepsilon \ll R^{-1}\) and \( 0 < -u, v < R\). The last one is true because of \eqref{eq.conformal_factor}.

Using \eqref{eq.conformal} and \eqref{eq.rf_conf} proves \eqref{eq.uvf_comp}. For \eqref{eq.conf_d_comp}, we calculate the corresponding derivatives and then use \eqref{eq.conf_comp}. Finally, we use \eqref{eq.conformal} and \eqref{eq.conf_comp} to obtain the comparisons in \eqref{eq.deriv_comp}.
\end{proof}

\begin{proposition} \label{thm.gtc_conf}
Assume the setting of Definition \ref{def.conformal}, and suppose $\mf{U} \subset \R^{m+n}$ is open and satisfies
\[
\bar{\mf{U}} \cap \mf{D} \subseteq \{ r < R \} \text{.}
\]
Also assume that the boundary of \(\mf{U}\), denoted by \( \partial\mf{U}\), is a hypersurface. Then, $\bar{\Phi} ( \mf{U} \cap \mf{D} )$ is an open subset of $\mf{D}$, and its boundary in $\mf{D}$ is the hypersurface $\bar{\Phi} ( \partial \mf{U} \cap \mf{D} )$.
\end{proposition}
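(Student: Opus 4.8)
\emph{Plan.} The whole argument rests on Proposition~\ref{thm.met_conf}: $\bar{\Phi}$ is a conformal isometry, hence a diffeomorphism (in particular a homeomorphism), of $\mf{D}\cap\{r<R\}$ onto the open subset $\bar{\Phi}(\mf{D}\cap\{r<R\})$ of $\mf{D}$. The openness assertion is then immediate, since $\mf{U}\cap\mf{D}$ is open (both $\mf{U}$ and $\mf{D}=\{f>0\}$ being open) and is carried by the open map $\bar{\Phi}$ into the set $\bar{\Phi}(\mf{D}\cap\{r<R\})$, which is itself open in $\mf{D}$. Likewise the final clause is immediate once the boundary has been identified: $\partial\mf{U}\cap\mf{D}$ is an open piece of the hypersurface $\partial\mf{U}$, and a diffeomorphism maps hypersurfaces to hypersurfaces. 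So the real task is the boundary identity, and for this I would reduce everything to the claim
\[
\operatorname{cl}_{\mf{D}}\!\big(\bar{\Phi}(\mf{U}\cap\mf{D})\big)=\bar{\Phi}\big(\overline{\mf{U}}\cap\mf{D}\big),
\]
where $\operatorname{cl}_{\mf{D}}$ denotes closure relative to $\mf{D}$. Granting this, and recalling that $\bar{\Phi}(\mf{U}\cap\mf{D})$ is open in $\mf{D}$, its boundary in $\mf{D}$ is $\operatorname{cl}_{\mf{D}}(\bar{\Phi}(\mf{U}\cap\mf{D}))\setminus\bar{\Phi}(\mf{U}\cap\mf{D})$; subtracting and using injectivity of $\bar{\Phi}$ gives $\bar{\Phi}\big((\overline{\mf{U}}\cap\mf{D})\setminus(\mf{U}\cap\mf{D})\big)=\bar{\Phi}(\partial\mf{U}\cap\mf{D})$, the last step being the elementary set identity $(\overline{\mf{U}}\cap\mf{D})\setminus(\mf{U}\cap\mf{D})=\partial\mf{U}\cap\mf{D}$, valid since $\mf{U}$ is open.

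For the displayed identity, the inclusion ``$\supseteq$'' is just continuity of $\bar{\Phi}$ on its domain: any $q\in\overline{\mf{U}}\cap\mf{D}$ satisfies $r(q)<R$ by hypothesis, hence lies in the open set $\mf{D}\cap\{r<R\}$, so a sequence $q_j\in\mf{U}$ with $q_j\to q$ eventually lies in $\mf{U}\cap\mf{D}$ and $\bar{\Phi}(q_j)\to\bar{\Phi}(q)$. The inclusion ``$\subseteq$'' is the crux, and it is exactly here that the hypothesis $\overline{\mf{U}}\cap\mf{D}\subseteq\{r<R\}$ is used. Given $p\in\operatorname{cl}_{\mf{D}}(\bar{\Phi}(\mf{U}\cap\mf{D}))$, pick $q_j\in\mf{U}\cap\mf{D}$ with $\bar{\Phi}(q_j)\to p$. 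Since $\mf{U}\cap\mf{D}\subseteq\overline{\mf{U}}\cap\mf{D}\subseteq\{r<R\}$ and $\mf{U}\cap\mf{D}\subseteq\mf{D}$, each $q_j$ lies in the bounded box $\{\,|x|<R,\ |t|<R\,\}$, so after passing to a subsequence $q_j\to q$ for some $q\in\R^{m+n}$. I would then locate $q$ inside the domain of $\bar{\Phi}$: by Proposition~\ref{thm.met_conf}, $f\circ\bar{\Phi}=\xi^{-1}f$ on $\mf{D}\cap\{r<R\}$ with $\xi=1-\varepsilon r+\varepsilon^2 f$, and since $f$ and $\xi$ are continuous on $\R^{m+n}$ with $\xi(q)>0$ (using $\varepsilon\ll_{m,n} R^{-1}$ and the bounds of Remark~\ref{rmk.uvfbound} on the box), passing to the limit in $f(q_j)=\xi(q_j)\,f(\bar{\Phi}(q_j))$ gives $f(q)=\xi(q)\,f(p)>0$ because $p\in\mf{D}$. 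Thus $q\in\mf{D}$, hence $q\in\overline{\mf{U}}\cap\mf{D}\subseteq\{r<R\}$, so $q$ lies in the open domain $\mf{D}\cap\{r<R\}$ of $\bar{\Phi}$; continuity then gives $p=\lim\bar{\Phi}(q_j)=\bar{\Phi}(q)\in\bar{\Phi}(\overline{\mf{U}}\cap\mf{D})$, as desired.

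I expect the main obstacle to be precisely this ``$\subseteq$'' step: a priori a limit point in $\mf{D}$ of $\bar{\Phi}(\mf{U}\cap\mf{D})$ could arise from the $q_j$'s drifting to the topological boundary of the domain of $\bar{\Phi}$, i.e.\ toward $\{r=R\}$ or toward the null cone $\{f=0\}=\partial\mf{D}$. The containment $\overline{\mf{U}}\cap\mf{D}\subseteq\{r<R\}$ rules out the first possibility (it keeps all the relevant closures strictly inside $\{r<R\}$), and the conformal relation $f\circ\bar{\Phi}=\xi^{-1}f$ together with $p\in\mf{D}$ rules out the second (it forces the limit $q$ off the cone). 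Everything else is soft point-set topology combined with the already-established properties of $\bar{\Phi}$.
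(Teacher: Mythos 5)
Your proof is correct. The paper's own proof of this proposition is a single sentence --- ``Since $\bar{\Phi}$ is a conformal isometry and conformal isometries preserve the causal geometry, we conclude that the proposition holds'' --- so it asserts the result rather than argues it; your write-up supplies precisely what that sentence glosses over. The openness assertion and the ``diffeomorphisms map hypersurfaces to hypersurfaces'' clause are indeed immediate from Proposition~\ref{thm.met_conf}, as you say. The substantive point is the boundary identity, and you correctly isolate where it could go wrong: a limit in $\mf{D}$ of points $\bar{\Phi}(q_j)$, $q_j\in\mf{U}\cap\mf{D}$, could a priori arise from the $q_j$'s drifting to the topological boundary of the domain of $\bar{\Phi}$, i.e.\ toward $\{r=R\}$ or toward the cone $\{f=0\}$. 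The hypothesis $\bar{\mf{U}}\cap\mf{D}\subseteq\{r<R\}$ together with the resulting boundedness of $\{r<R,\ |t|<R\}$ kills the first escape route; your use of the conformal relation $f\circ\bar{\Phi}=\xi^{-1}f$ from \eqref{eq.rf_conf}, with $\xi=1-\varepsilon r+\varepsilon^2 f$ bounded below by $1-\varepsilon R>0$ on the relevant box since $\varepsilon\ll_{m,n} R^{-1}$ and $f\ge 0$ in the limit, forces $f(q)=\xi(q)\,f(p)>0$ and so kills the second. Combined with injectivity of $\bar{\Phi}$ (a M\"obius map in each null variable) and the elementary identity $(\bar{\mf{U}}\setminus\mf{U})\cap\mf{D}=\partial\mf{U}\cap\mf{D}$ for open $\mf{U}$, this yields $\operatorname{cl}_{\mf{D}}(\bar{\Phi}(\mf{U}\cap\mf{D}))\setminus\bar{\Phi}(\mf{U}\cap\mf{D})=\bar{\Phi}(\partial\mf{U}\cap\mf{D})$ as claimed. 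In short: same underlying fact as the paper (conformal isometry $\Rightarrow$ diffeomorphism $\Rightarrow$ the topology transports), but you make the point-set argument explicit, including the one place where the geometry genuinely matters.
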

\begin{proof}
Since \(\bar{\Phi}\) is a conformal isometry and conformal isometries preserve the causal geometry, we conclude that the proposition holds.
\end{proof}

\noindent Now that we have proved that \(\bar{\Phi}\) is indeed a conformal isometry, our next goal is to look at how the wave operator transforms under \(\bar{\Phi}\).
\begin{proposition} \label{thm.conf_wave}
Assume the setting of Definition \ref{def.conformal}.
Let $W$ be an open subset of $\mf{D}$, let $\bar{z} \in C^2 ( W )$, and let $z = \bar{z} \circ \bar{\Phi}$.
Then, the following equation is satisfied:
\begin{equation}
\label{eq.conf_wave} \left[ \bar{\square} + \frac{ (n-1)(m+n-2)\varepsilon}{2 (\bar{\rho}\circ \bar{\Phi})} \right] ( \xi^{\frac{m+n}{2} - 1} \bar{z} ) = \xi^{\frac{m+n}{2}+1} \square z \text{.}
\end{equation}
\end{proposition}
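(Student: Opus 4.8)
The plan is to exploit the fact that \( \bar{\Phi} \) is a conformal isometry, \( \bar{\Phi}_\ast \bar{g} = \xi^{-2} g \) on \( \mf{D} \cap \{ r < R \} \) (Proposition \ref{thm.met_conf}), and to invoke the standard conformal covariance of the wave operator. Recall that in dimension \( N = m+n \), if \( \tilde{g} = \Omega^2 g \) are two conformally related pseudo-Riemannian metrics, then the conformal Laplacians satisfy the identity
\[
\square_{\tilde g}\big( \Omega^{-\frac{N-2}{2}} \psi \big) + \frac{N-2}{4(N-1)} \mathrm{Scal}_{\tilde g}\, \Omega^{-\frac{N-2}{2}} \psi = \Omega^{-\frac{N+2}{2}} \left[ \square_g \psi + \frac{N-2}{4(N-1)} \mathrm{Scal}_g\, \psi \right].
\]
Since \( g \) is flat, \( \mathrm{Scal}_g = 0 \), and the right-hand side reduces to \( \Omega^{-\frac{N+2}{2}} \square_g \psi \). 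First I would set \( N = m+n \), \( \Omega = \xi^{-1} \) (so \( \Omega^{-\frac{N-2}{2}} = \xi^{\frac{m+n}{2}-1} \) and \( \Omega^{-\frac{N+2}{2}} = \xi^{\frac{m+n}{2}+1} \)), and pull everything back through \( \bar{\Phi} \). Because \( z = \bar z \circ \bar\Phi \), the operator \( \square_g \) acting on \( z \) corresponds, after pullback, to \( \square \) acting on \( z \) in the original flat coordinates, while \( \square_{\tilde g} = \square_{\bar\Phi_\ast \bar g} \) corresponds to \( \bar\square \) acting on \( \bar z \); and \( \mathrm{Scal}_{\tilde g} \) pulls back to the scalar curvature of \( \bar g \). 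This immediately gives \eqref{eq.conf_wave} provided one identifies
\[
\frac{m+n-2}{4(m+n-1)}\, \mathrm{Scal}_{\bar g} = \frac{(n-1)(m+n-2)\varepsilon}{2(\bar\rho \circ \bar\Phi)},
\]
equivalently \( \mathrm{Scal}_{\bar g} = \dfrac{2(m+n-1)(n-1)\varepsilon}{\bar\rho} \) (evaluated at the relevant point; the \( \bar\Phi \) appears because the left side of \eqref{eq.conf_wave} lives on the target copy of \( \mf{D} \)).

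So the computational core — and the main obstacle — is the explicit evaluation of the scalar curvature of the warped metric \( \bar g = -4\,du\,dv + \bar\rho^2 \mathring\gamma_{\Sph^{n-1}} - \tau^2 \mathring\gamma_{\Sph^{m-1}} \). I would do this directly from the Christoffel symbols listed in Lemma \ref{lemma_Christ}, computing the Ricci tensor \( \bar R_{\alpha\beta} = \bar R^\mu{}_{\alpha\mu\beta} \) component by component in the \( (u,v,\omega_x,\omega_t) \) frame, then contracting with \( \bar g^{\alpha\beta} \). The curvature has contributions from three sources: the \( \bar\rho^2 \mathring\gamma_{\Sph^{n-1}} \) block (an \( (n-1) \)-sphere of radius \( \bar\rho \), whose intrinsic curvature scales like \( (n-1)(n-2)\bar\rho^{-2} \)), the \( \tau^2 \mathring\gamma_{\Sph^{m-1}} \) block, and the cross terms coming from the \( u,v \)-dependence of \( \bar\rho \) and \( \tau \). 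Since \( \bar\rho = r + 2\varepsilon f \), many terms are \( O(1) \) in \( \varepsilon \), and I would expect most of them to cancel against each other (the flat metric \( g \), at \( \varepsilon = 0 \), has zero scalar curvature), leaving a net result linear in \( \varepsilon \) of the stated form. A useful shortcut: Proposition \ref{thm.f_rho_wp} already records \( \bar\square(f/\bar\rho) \), and Proposition \ref{thm.pseudoconvex_wp} records \( \bar\square\bar w \); since \( \bar w \) is an affine function of \( f/\bar\rho \), these essentially encode the same second-order data, and the scalar curvature should be extractable by tracking the \( O(\varepsilon) \) discrepancies in the Bochner-type identity for \( f \), i.e. from comparing \( \bar\square f \) with its flat-geometry value.

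Alternatively — and this may be cleaner for the write-up — I would bypass scalar curvature entirely and verify \eqref{eq.conf_wave} by brute force: expand \( \bar\square(\xi^{\frac{m+n}{2}-1}\bar z) \) using the product rule for \( \bar\square \), namely \( \bar\square(\eta \bar z) = \eta\, \bar\square\bar z + 2 \bar\nabla^\alpha\eta\, \bar\nabla_\alpha\bar z + \bar z\, \bar\square\eta \) with \( \eta = \xi^{\frac{m+n}{2}-1} \); compute \( \bar\nabla^\sharp\xi \), \( \bar\square\xi \), and hence \( \bar\nabla^\sharp\eta \), \( \bar\square\eta \) using Remark \ref{rmk_xirf} (\( \xi = 1 - \varepsilon r + \varepsilon^2 f \)) together with the derivative formulas for \( f \) and \( r \) in the \( \bar g \)-geometry; then relate \( \bar\square\bar z \) and \( \bar\nabla^\alpha\xi\,\bar\nabla_\alpha\bar z \) back to \( \square z \) via the change of variables \eqref{eq.conformal}, using Proposition \ref{thm.uvf_comp} and the explicit pullbacks of \( du, dv \) computed inside the proof of Proposition \ref{thm.met_conf}. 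The first-order cross term \( 2\bar\nabla^\alpha\eta\,\bar\nabla_\alpha\bar z \) is the delicate one: it must combine with the first-order part of \( \xi^{\frac{m+n}{2}+1}\square z \) so that no \( \bar z \)-gradient survives except what is packaged into \( \bar\square \) on the left. I expect the bookkeeping of these first-order terms, and the correct appearance of \( \bar\rho\circ\bar\Phi \) rather than \( \bar\rho \) or \( r \) in the zeroth-order coefficient, to be the fiddly part; everything else is the routine conformal-change computation.
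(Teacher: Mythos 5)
Your core idea is the same as the paper's: invoke the conformal covariance of the wave operator (Wald, Appendix D) with conformal factor $\Omega = \xi^{-1}$, note $\mathrm{Scal}_g = 0$ since $g$ is flat, and reduce the proposition to identifying the zeroth-order coefficient with (a multiple of) the scalar curvature of $\bar{g}$. Where you diverge from the paper is in how to obtain that scalar curvature, and this is where the paper is considerably more economical. You propose computing $\mathrm{Ric}(\bar{g})$ directly from the Christoffel symbols in Lemma~\ref{lemma_Christ} and contracting. That is correct in principle, but heavy: one must sum contributions from the $(u,v)$-block, the $\bar{\rho}^2\mathring{\gamma}_{\Sph^{n-1}}$ block, the $\tau^2\mathring{\gamma}_{\Sph^{m-1}}$ block, and the cross-terms from the $(u,v)$-dependence of $\bar{\rho}$ and $\tau$, and then verify that all the $O(1)$ pieces cancel (as they must, since $\bar{g}$ reduces to the flat $g$ at $\varepsilon=0$). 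The paper instead uses the \emph{other} formula in Wald's Appendix~D --- the conformal transformation law for scalar curvature --- which expresses $\bar{\mc{R}} \circ \bar{\Phi} = \mathrm{Scal}_{\xi^{-2}g}$ purely in terms of $\square_g \xi$ and $g^{\alpha\beta}\nabla_\alpha\xi\nabla_\beta\xi$, both of which are two-line computations in the flat metric. No Ricci tensor of $\bar{g}$ ever appears. This is the step your proposal would most benefit from seeing.

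Two further cautions. First, your stated conformal Laplacian identity carries a $+\frac{N-2}{4(N-1)}\mathrm{Scal}$, but in the sign convention where spheres have positive scalar curvature (Wald's, and the standard one), the conformally covariant operator is $\square - \frac{N-2}{4(N-1)}\mathrm{Scal}$, with a minus. Correspondingly the paper finds $\bar{\mc{R}}\circ\bar{\Phi} = -\dfrac{2(n-1)(m+n-1)\varepsilon}{\bar{\rho}\circ\bar{\Phi}}$, i.e.\ \emph{negative}, so a direct Christoffel computation in the same convention would yield a sign opposite to the one you predicted; the two errors would expose each other but you should fix the identity. Second, the proposed ``shortcut'' of extracting $\mathrm{Scal}_{\bar{g}}$ from the Bochner identity for $f$ does not go through as stated: Bochner produces $\mathrm{Ric}(\bar{\nabla}f,\bar{\nabla}f)$, a single directional contraction, not the full trace. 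Propositions~\ref{thm.f_rho_wp} and~\ref{thm.pseudoconvex_wp} record second-order data about $f/\bar{\rho}$ and $\bar{w}$, but they are not a substitute for the full Ricci contraction. Your brute-force Route~B is sound but amounts to re-deriving Wald's identity from scratch, which is strictly more work than either path.
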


\begin{proof}
The expression for how the scalar curvature changes under a conformal transformation can be found in \cite[Appendix D, p.446]{Wald}, and it is given by
\begin{align*}
\bar{\mc{R}} \circ \bar{\Phi} & = \xi^2 \{ -2( m+n-1)g^{\alpha\beta}\nabla_\alpha\nabla_\beta \text{log} (\xi^{-1}) - (m+n-1) (m+n-2) g^{\alpha\beta}\nabla_\alpha \text{log}(\xi^{-1}) \nabla_\beta \text{log}(\xi^{-1})\}\\
& = 2(m+n-1) \xi  \square \xi - (m+n)(m+n-1)  g^{\alpha\beta} \nabla_\alpha \xi \nabla_\beta\xi,
\end{align*}
where all the derivatives are in the original (pseudo-Riemannian) metric, namely the case when \(\varepsilon = 0\). Then, we compute the following
\begin{align*}
\square \xi & = g^{\alpha\beta}\partial_{\alpha\beta}\xi - g^{\alpha\beta}\Gamma^{\mu}_{\alpha\beta} \partial_\mu \xi = \frac{(m+n)}{2} \varepsilon^2 - \frac{(n-1)\varepsilon}{r} .
\end{align*}
Also,
\begin{align*}
g^{\alpha\beta} \nabla_\alpha \xi \nabla_\beta\xi = -\partial_u\xi \partial_v\xi = \varepsilon^2 (1-\varepsilon v) (1+ \varepsilon u) = \varepsilon^2 \xi.
\end{align*}
Using these in the expression for \( \bar{\mc{R}}\),
\begin{align*}
\bar{\mc{R}} \circ \bar{\Phi} & = 2(m+n-1) \xi \left( \frac{(m+n)}{2} \varepsilon^2 - \frac{(n-1)}{r} \varepsilon \right) - (m+n)(m+n-1) \varepsilon^2 \xi \\
& = \frac{-2(n-1)(m+n-1) \varepsilon}{\bar{\rho} \circ \bar{\Phi}},
\end{align*}
where we also used \eqref{eq.rf_conf} in last step.
The formula for the change of wave operator under conformal transformation (from \cite[Appendix D]{Wald}) is 
\begin{align*}
\left[ \bar{\square} - \frac{(m+n-2)}{4(m+n-1)} (\bar{\mc{R}} \circ \bar{\Phi}) \right] \xi^{\frac{(m+n)}{2}-1} = \xi^{\frac{(m+n)}{2}+1} \square.
\end{align*}
Using the expression for \(\bar{\mc{R}} \circ \bar{\Phi}\) in the LHS of the above equation, shows that
\begin{equation*}
\left[ \bar{\square} - \frac{(m+n-2)}{4(m+n-1)} (\bar{\mc{R}} \circ \bar{\Phi}) \right] \xi^{\frac{(m+n)}{2}-1} = \left[ \bar{\square} - \frac{(m+n-2)}{4(m+n-1)} \cdot \frac{(-2)(n-1)(m+n-1) \varepsilon}{\bar{\rho} \circ \bar{\Phi}} \right] \xi^{\frac{(m+n)}{2}-1}\text{,}
\end{equation*}
which concludes the proof.
\end{proof}

\subsection{Boundary Carleman estimate}\label{sec App_BCE}

Now we will present a Carleman estimate in the warped geometry. This warped estimate will then be used to prove the main boundary Carleman estimate \eqref{eq.carleman_est} provided in Theorem \ref{thm.carl_bdry}, by using the map \(\bar{\Phi}\). 

\begin{definition}
For given constants \(a, b >0\) define \(\zeta_{a, b} \), the warped Carleman weight, as
\begin{equation}
\label{eq.carleman_weight_wp} \zeta_{a, b} := f^{2 a} e^{ 4 a b f^\frac{1}{2} } \text{.}
\end{equation}
\end{definition}

\noindent We have the following result in the warped geometry.

\begin{theorem} \label{thm.carleman_est_wp}
Let \(\mf{U} \subset \R^{m+n}\) be open, such that for some \(R>0\)
\begin{equation}
\label{eq.carleman_domain_wp} \bar{\mf{U}} \cap \mf{D} \subseteq \{ r < R \},
\end{equation}
 and assume that \(\partial\mf{U}\), the boundary of \(\mf{U}\), is smooth and timelike. Let constants \(\varepsilon, a, b > 0\) be chosen such that
\begin{align}
\label{eq.carleman_choices_wp} a \geq (m+n)^2 \text{,} &\qquad \varepsilon \ll_{m,n} b \ll R^{-1}.
\end{align}
Then, for any uniformly $C^1$-bounded funtion \(z \in C^2 ( \bar{\Phi} ( \mf{U} \cap \mf{D} ) ) \cap C^1 ( \bar{\Phi} ( \bar{\mf{U}} \cap \mf{D} ) )\) with
\begin{equation}
\label{eq.carleman_dirichlet_wp} z |_{ \bar{\Phi} ( \partial \mf{U} \cap \mf{D} ) } = 0 \text{,}
\end{equation}
the following Carleman estimate is satisfied
\begin{align}
\notag \frac{ \varepsilon }{ 8 } \int_{ \bar{\Phi} ( \mf{U} \cap \mf{D} ) } \zeta_{a, b} \bar{\rho}^{-1} ( | u \cdot \partial_u z |^2 + & | v \cdot \partial_v z |^2 + f \bar{g}^{ab} \bar{\nasla}_a z \bar{\nasla}_b z  - f \bar{g}^{CD} \hat{\nabla}_C z  \hat{\nabla}_D z ) ) + \frac{ b a^2 }{4} \int_{ \bar{\Phi} ( \mf{U} \cap \mf{D} ) } \zeta_{a, b} f^{- \frac{1}{2} } z^2 \\
\label{eq.carleman_est_wp} & \leqslant \frac{1}{2 a} \int_{ \bar{\Phi} ( \mf{U} \cap \mf{D} ) } \zeta_{a, b} f | \bar{\Box} z |^2 + \int_{ \bar{\Phi} ( \partial \mf{U} \cap \mf{D} ) } \zeta_{a, b} \bar{\mc{N}} f | \bar{\mc{N}} z |^2,
\end{align}
where the integrals are with respect to volume forms induced by \(\bar{g}\), and \(\bar{\mc{N}}\) denotes the outer-pointing unit normal of $\bar{\Phi} ( \mf{U} \cap \mf{D} )$, in the warped geometry. 
\end{theorem}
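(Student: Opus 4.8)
# Proof Proposal for Theorem \ref{thm.carleman_est_wp}

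The plan is to prove the warped Carleman estimate via the standard Morawetz-type multiplier (energy) method, using the function $f$ as the Carleman weight exponent and the vector field $N$ (from \eqref{eq.TN}) as the multiplier, exploiting the pseudoconvexity encoded in Proposition \ref{thm.pseudoconvex_wp}. The starting point is the conjugated operator: write $z = \zeta_{a,b}^{-1/2} w$ (equivalently, introduce the weight $e^{2\lambda \varphi}$ with $\varphi = \log f + 2b f^{1/2}$ and $\lambda = a$), so that $\zeta_{a,b} |\bar\square z|^2$ becomes, after expansion, a quadratic form in $w$ and its first derivatives plus the conjugated second-order operator applied to $w$. The key algebraic identity is the pointwise Morawetz/Pohozaev identity: for a multiplier vector field $X = \bar\nabla^\sharp \varphi$ (up to lower order corrections), one has
\begin{equation}
\notag \bar\nabla_\alpha \bar{P}^\alpha = \bar\pi^{\alpha\beta}\, \bar\nabla_\alpha w\, \bar\nabla_\beta w + (\text{zeroth order in } w) + 2 (\bar\nabla_X w + c\, w)\, \bar\square_\varphi w,
\end{equation}
where $\bar{P}^\alpha$ is the energy-momentum current, $\bar\pi$ is the deformation tensor from \eqref{eq.pi_wp}, and the zeroth-order term carries the factor $\bar\square \bar{w}$ computed in \eqref{eq.w_box_wp}. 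First I would fix the precise form of $X$ and the subprincipal correction $c$ so that the resulting bulk terms are: (i) the $\bar\pi$-contraction, which by \eqref{eq.pseudoconvex_wp} gives $\frac{\varepsilon f}{2\bar\rho}$ times $(|u\partial_u z|^2 + |v\partial_v z|^2 + f\bar g^{ab}\bar\nasla_a z \bar\nasla_b z - f\bar g^{CD}\hat\nabla_C z\hat\nabla_D z)$ after re-expressing $\bar\nabla w$ in the $(\partial_u,\partial_v,\text{angular})$ frame and converting back from $w$ to $z$; and (ii) the zeroth-order term $\sim b a^2 \zeta_{a,b} f^{-1/2} z^2$, which comes from the $b f^{1/2}$ part of $\varphi$ dominating for large $a$.

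The key steps, in order: (1) Compute the conjugated wave operator $\bar\square_\varphi := e^{-a\varphi}\bar\square e^{a\varphi}$ and split it into self-adjoint and skew-adjoint parts; the skew part is $2a(\bar\nabla^\sharp\varphi)\cdot\bar\nabla + a(\bar\square\varphi)$. (2) Use $\bar\nabla^\sharp\varphi = (f^{-1} + b f^{-1/2})\bar\nabla^\sharp f = (f^{-1}+bf^{-1/2})\cdot\frac12(u\partial_u + v\partial_v)$ from \eqref{eq.f_grad_wp}, i.e. a multiple of $N$, and apply the multiplier identity integrating over $\bar\Phi(\mf{U}\cap\mf{D})$. (3) Collect the bulk terms: the $(\bar\nabla^2\varphi)$ contractions reduce via \eqref{eq.f_hess_wp} and $\bar\nabla^2(f^{1/2}), \bar\nabla^2(\log f)$ — here $\bar\nabla^2 f$ is pure-trace-like plus the $\bar\pi$ correction, so the positive contributions are exactly $\varepsilon f/(2\bar\rho)$ in the $TT$, $ab$ directions (good, since $\varepsilon, f, \bar\rho > 0$ on $\mf{D}$) and the $NN$, $CD$ pieces combine with the large multiple of $a^2 b$ from differentiating $e^{2abf^{1/2}}$ twice along $N$. (4) Track the zeroth-order (Hardy-type) term: the dominant contribution is $a^2 \cdot (\text{something}) \cdot b f^{-1/2} z^2$; one absorbs error terms of lower order in $a$ and in $\varepsilon b$ using the smallness hypotheses $\varepsilon \ll_{m,n} b \ll R^{-1}$ and $a \geq (m+n)^2$, together with the bounds $0 < f < r^2 < R^2$ and $f^{1/2} < \bar\rho$ from Proposition \ref{thm.uvf_bound}. (5) Handle the boundary: integration by parts produces $\int_{\bar\Phi(\partial\mf{U}\cap\mf{D})}$ of the current contracted with $\bar{\mc N}$; since $z$ vanishes on this boundary by \eqref{eq.carleman_dirichlet_wp}, only the normal derivative survives, and since $\partial\mf{U}$ is timelike (so $\bar{\mc N}$ is spacelike) and the multiplier is $N$ (which points "outward" relative to $f$-level sets with $\bar{\mc N}f$ of a definite sign on the timelike boundary), the boundary term takes the stated form $\zeta_{a,b}\bar{\mc N}f |\bar{\mc N}z|^2$ with the correct sign to appear on the right-hand side. (6) Finally, re-expand $|\bar\square_\varphi w|^2 \geq$ (cross term) $- \frac{1}{2a}\cdot(\ldots)$ to move $\frac{1}{2a}\int\zeta_{a,b}f|\bar\square z|^2$ to the right; the factor $f$ and the $\frac{1}{2a}$ arise from Cauchy–Schwarz balancing the cross term $2\langle \bar\square_\varphi w, \bar\nabla_X w + cw\rangle$ against the weight of $\bar\nabla^\sharp\varphi \sim f^{-1}$.

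The main obstacle will be Step (3)–(4): showing that after all the lower-order terms (those linear in $\varepsilon$ without the compensating $a^2$ or $b$, the curvature-type term $\bar\square\bar w$ from \eqref{eq.w_box_wp}, and the cross terms between the $\log f$ and $bf^{1/2}$ pieces of $\varphi$) are accounted for, the \emph{net} coefficient of each good bulk term is bounded below by $\frac{\varepsilon}{8}\zeta_{a,b}\bar\rho^{-1}$ and $\frac{ba^2}{4}\zeta_{a,b}f^{-1/2}$ respectively. This is a delicate bookkeeping of competing powers of $a$, $b$, $\varepsilon$, $f$, $\bar\rho$, and it is precisely where the hierarchy $\varepsilon \ll_{m,n} b \ll R^{-1}$ and $a \geq (m+n)^2$ must be used quantitatively; the geometric input that makes it work is that $\bar\pi_{TT}, \bar\pi_{ab} > 0$ strictly (Proposition \ref{thm.pseudoconvex_wp}), i.e. genuine pseudoconvexity of the level sets of $f$ in the warped metric. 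A secondary technical point is justifying all integrations by parts given that $\zeta_{a,b}$ and $f^{-1/2}$ degenerate/blow up as $f \to 0$ (near the null cone $\partial\mf{D}$): one argues on $\{f > \delta\}$ and lets $\delta \to 0$, checking that the $\{f=\delta\}$ boundary contributions vanish in the limit because $\zeta_{a,b} \sim f^{2a} \to 0$ fast enough and $z$ is $C^1$ up to the relevant boundary.
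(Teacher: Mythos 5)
Your proposal is correct and follows essentially the same route as the paper: conjugate by $e^{-F}$ with $F=-a(\log f+2bf^{1/2})$, apply a Morawetz-type multiplier built from $\bar\nabla^\sharp f$ plus a scalar correction (the paper's $\bar S_w=\bar S+\bar w$), use Proposition~\ref{thm.pseudoconvex_wp} to extract the $\bar\pi$-driven positive bulk terms, absorb the remaining error terms via the hierarchy $\varepsilon\ll b\ll R^{-1}$, $a\geq(m+n)^2$, and conclude by integrating on $\{f>\delta\}$ and letting $\delta\to0$ with the $\{f=\delta\}$ boundary term vanishing thanks to $\zeta_{a,b}\sim f^{2a}$. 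The one structural point to sharpen is the bad $-|N\psi|^2$ term in the $\bar\pi$-contraction: the paper absorbs it not via the $a^2b$ weight but by introducing $\tilde N:=f^{-(m+n-2)/4}Nf^{(m+n-2)/4}$ and using the $O(a)$ coefficient coming from $-F'|\bar S_w\psi|^2$.
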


The proof of Theorem \ref{thm.carleman_est_wp} is analogous to the corresponding proof presented in \cite[Theorem 3.22]{Arick}, for the \( \R^{1+n}\) case. Although the proof that we present here is self contained, we have omitted some details. 

\begin{proof}
Let us assume that the hypothesis of Theorem \ref{thm.carleman_est_wp} is true and let \(z \in C^2 ( \bar{\Phi} ( \mf{U} \cap \mf{D} ) ) \cap C^1 ( \bar{\Phi} ( \bar{\mf{U}} \cap \mf{D} ) )\). We define \(F\), a transformation of \(f\), as
\begin{align}
\label{eq.F} F := F (f) := - a ( \log f + 2 b f^\frac{1}{2} ),
\end{align}
and also define 
\begin{align*}
 \qquad \bar{S} := \bar{\nabla}^\sharp f \text{,} \qquad \bar{S}_w := \bar{S} + \bar{w}.
\end{align*}
We define the following conjugated function
\begin{equation}\label{eq.conjug}
\psi := e^{-F} z,
\end{equation}
and the conjugated wave operator \(\bar{\mc{L}}\), as
\begin{equation}
\label{eq.F_conj} \bar{\mc{L}} \psi := e^{-F} \bar{\Box} (e^F \psi) = e^{-F} \bar{\Box} z. 
\end{equation}
The symbol \(\prime\) denotes derivatives with respect to \(f\) (in this section). Note that, \(\psi \in C^2 ( \bar{\Phi} ( \mf{U} \cap \mf{D} ) )\). Using \eqref{eq.carleman_weight_wp} and calculating the derivatives of \(F\) (with respect to \(f\)), shows that
\begin{equation}
\label{eq.F_deriv} e^{-2 F} = \zeta_{a, b} \text{,} \qquad F' = - a ( f^{-1} + b f^{- \frac{1}{2} } ).
\end{equation}
Note that the above equation also shows that \( - F' > 0 \). 

As per the standard derivation of Carleman estimates, we will now work with \(\psi\) and later go back to \(z\). For the reader's convenience, we present a layout of the proof:
\begin{itemize}

\item \textbf{Step I:} We find a wave type equation for the pair \( (\bar{\mc{L}}, \psi)\). Due to \eqref{eq.F_deriv}, it means that we look at the usual wave operator conjugated with the Carleman weight. Then, we find estimates for the non-dominating terms to obtain a point-wise inequality which only contains the dominating terms. The resulting inequality will be, roughly speaking, a Carleman inequality for the conjugated pair.

\item \textbf{Step II:} We reverse the conjugation done earlier, to go from \( ( \bar{\mc{L}}, \psi)\) to the pair \((\bar{\square}, z)\). This will give us a point-wise Carleman estimate for \(z\).

\item \textbf{Step III:} Finally, we integrate the resulting estimate from Step II above to conclude the proof of the Theorem.

\end{itemize}

\begin{remark}
In comparison with \cite{Arick}, here the new information arising from the temporal angular components is included in the tensor \( \bar{\pi}\) (see Definition \ref{def.pseudoconvex_wp} and Proposition \ref{thm.pseudoconvex_wp} above). A major change is in the final step when we integrate the point-wise Carleman estimate for \(z\).
\end{remark}

\noindent \textbf{Step I.} We begin by defining the function \( \mc{A}\) and the 1-form \( \bar{P}:= \bar{P}[\psi] \) as follows:
\begin{align}
\label{eq.carleman_id_A} \mc{A}(f) & := f ( F' )^2 + ( f F' )' = a^2 f^{-1} + b a \left( 2 a - \frac{1}{2} \right) f^{- \frac{1}{2} } + b^2 a^2, \\
\label{eq.carleman_id_current} \bar{P}_\beta & := \bar{S} \psi \bar{\nabla}_\beta \psi - \frac{1}{2} \bar{\nabla}_\beta f \cdot \bar{\nabla}^\mu \psi \bar{\nabla}_\mu \psi + \bar{w} \cdot \psi \bar{\nabla}_\beta \psi + \frac{1}{2} ( \mc{A} \bar{\nabla}_\beta f - \bar{\nabla}_\beta \bar{w} ) \cdot \psi^2.
\end{align}
First, we calculate
\begin{align*}
\bar{\nabla}^\beta \bar{P}_\beta &= \bar{\nabla}^\beta \left( \bar{S} \psi \bar{\nabla}_\beta \psi - \frac{1}{2} \bar{\nabla}_\beta f \cdot \bar{\nabla}^\mu \psi \bar{\nabla}_\mu \psi + \bar{w} \cdot \psi \bar{\nabla}_\beta \psi + \frac{1}{2} ( \mc{A} \bar{\nabla}_\beta f - \bar{\nabla}_\beta \bar{w} ) \cdot \psi^2 \right)\\
& = \bar{\square} \psi \bar{S} \psi + \bar{\nabla}_{\alpha \beta} f \cdot \bar{\nabla}^\alpha \psi \bar{\nabla}^\beta \psi - \frac{1}{2} \bar{\Box} f \cdot \bar{\nabla}^\mu \psi \bar{\nabla}_\mu \psi + \bar{w} \cdot \psi \bar{\Box} \psi + \bar{w} \cdot \bar{\nabla}^\beta \psi \bar{\nabla}_\beta \psi - \frac{1}{2} \bar{\Box} \bar{w} \cdot \psi^2 \\
& \quad + \mc{A} \cdot \psi \bar{S}_w \psi + \frac{1}{2} ( f \mc{A} )' \cdot \psi^2 + \frac{ \varepsilon f }{2 \bar{\rho} } \mc{A} \cdot \psi^2,
\end{align*}
where we used the fact that 
\begin{align*}
\frac{1}{2} \bar{\nabla}^\beta ( \mc{A} \bar{\nabla}_\beta f \cdot \psi^2 ) = \mc{A} \cdot \psi \bar{S}_w \psi + \frac{1}{2} ( f \mc{A} )' \cdot \psi^2 + \frac{ \varepsilon f }{ 2\bar{\rho} } \mc{A} \cdot \psi^2.
\end{align*}
An application of \eqref{eq.pi_wp} and \eqref{eq.w_wp} shows that
\begin{equation}
\label{eql.carleman_id_ptwise_1a}\bar{\nabla}^\beta \bar{P}_\beta = \bar{\square} \psi \bar{S}_w \psi + \bar{\pi}_{\alpha\beta} \bar{\nabla}^\alpha \psi \bar{\nabla}^\beta \psi - \frac{1}{2} \bar{\Box} \bar{w} \cdot \psi^2 + \mc{A} \cdot \psi \bar{S}_w \psi + \frac{1}{2} ( f \mc{A} )' \cdot \psi^2 + \frac{ \varepsilon f }{2 \bar{\rho} } \mc{A} \cdot \psi^2. 
\end{equation}
Using Proposition \ref{thm.f_deriv_wp}, Definition \ref{def.pseudoconvex_wp}, and \eqref{eq.F}, we get
\begin{equation}
\label{eql.carleman_id_ptwise_1} \bar{\mc{L}} \psi \bar{S}_w \psi = \bar{\Box} \psi \bar{S}_w \psi + 2 F' \cdot | \bar{S}_w \psi |^2 + \left( \mc{A} + \frac{ \varepsilon f }{ \bar{\rho} } F' \right) \cdot \psi \bar{S}_w \psi \text{.}
\end{equation}
Then, using \eqref{eql.carleman_id_ptwise_1a} and \eqref{eql.carleman_id_ptwise_1} along with \eqref{eq.pseudoconvex_wp}, shows that \( (\bar{\mc{L}}, \psi)\) satisfies the following equation:
\begin{align}
\label{eq.carleman_id_ptwise} - \bar{\mc{L}} \psi \bar{S}_w \psi + \bar{\nabla}^\beta \bar{P}_\beta &= - 2 F' \cdot | \bar{S}_w \psi |^2 + \frac{ \varepsilon f }{ 2 \bar{\rho} } ( | T \psi |^2 + \bar{g}^{ab} \bar{\nasla}_a \psi \bar{\nasla}_b \psi - \bar{g}^{CD} \hat{\nabla}_C \psi \hat{\nabla}_D \psi  - | N \psi |^2 ) \\
\notag & \qquad - \frac{ \varepsilon f }{ \bar{\rho} } F' \cdot \psi \bar{S}_w \psi + \frac{1}{2} \left[ ( f \mc{A} )' + \frac{ \varepsilon f }{ \bar{\rho} } \mc{A} - \bar{\Box} \bar{w} \right] \cdot \psi^2.
\end{align}
Note that the above is an equation for the conjugated wave operator multiplied with the chosen multiplier function. 

Now, we will find estimates for some of the terms in \eqref{eq.carleman_id_ptwise} to obtain a point-wise estimate. As a first step in this direction, let us define the operator
\begin{equation}
\label{eq.carleman_est_tilde} \tilde{N} := f^{-\frac{m+n-2}{4} } N f^{ \frac{m+n-2}{4}}.
\end{equation}
This will help us to replace the \( -|N \psi|^2\) term on the RHS of \eqref{eq.carleman_id_ptwise}, which has a bad sign, with the term \( | \tilde{N} \psi |^2 \).
\begin{namedthm*}{\underline{Claim 1}}[]
The following is satisfied
\begin{align}
\label{eq.carleman_est_ptwise} \frac{1}{ 4 a } f | \bar{\mc{L}} \psi |^2 + \bar{\nabla}^\beta \bar{P}_\beta &\geq \frac{ \varepsilon f }{ 2 \bar{\rho} } ( | T \psi |^2 + \bar{g}^{ab} \bar{\nasla}_a \psi \bar{\nasla}_b \psi - \bar{g}^{CD} \hat{\nabla}_C \psi \hat{\nabla}_D \psi ) + \frac{1}{4} a | \tilde{N} \psi |^2 + \frac{1}{4} b a^2 f^{- \frac{1}{2} } \cdot \psi^2 .
\end{align}
\end{namedthm*}

\noindent\underline{\textit{Proof of claim}}: 
Using the definition of \( \varepsilon, N, \tilde{N}\), along with equation \eqref{eq.f_grad_wp}, we get the following estimate for the multiplier term
\[
\bar{S}_w \psi = f^\frac{1}{2} \tilde{N} \psi + \frac{ (n - 2) \varepsilon f }{ 2 \bar{\rho} } \cdot \psi \geq f^\frac{1}{2} \tilde{N} \psi - \varepsilon n C f^\frac{1}{2} \cdot | \psi |,
\]
for some constant \(C>0\). Now we find lower bounds for some of the terms in the RHS of \eqref{eq.carleman_id_ptwise}. We use equations \eqref{eq.uvf_bound}, \eqref{eq.rho_est_wp}, \eqref{eq.w_box_wp}, \eqref{eq.carleman_choices_wp}, \eqref{eq.F_deriv}, and the above estimate for \( \bar{S}_w \psi \), to show that
\begin{align*}
- F' \cdot  | \bar{S}_w \psi |^2 - &\frac{ \varepsilon f }{ 2 \bar{\rho} } | N \psi |^2 - \frac{ \varepsilon f }{ \bar{\rho} } F' \cdot \psi \bar{S}_w \psi - \frac{1}{2} \bar{\Box} \bar{w} \cdot \psi^2 \\ & \geq a ( 1 + b f^\frac{1}{2} ) \left( \frac{1}{2} | \tilde{N} \psi |^2 - \varepsilon^2 n^2 C \cdot \psi^2 \right) - \varepsilon C f^\frac{1}{2} \cdot | \tilde{N} \psi |^2 - \varepsilon C (m+n)^2 f^{ - \frac{1}{2} } \cdot \psi^2 \text{.}
\end{align*}
Due to \eqref{eq.uvf_bound}, \eqref{eq.carleman_domain_wp}, and the choice of constants \(b, R\) from \eqref{eq.carleman_choices_wp}, we have
\[ b \ll R^{-1} \Rightarrow b f^{\frac{1}{2}} \ll f^{\frac{1}{2}} R^{-1} \ll r R^{-1} \ll 1.  \]
Hence, we conclude that \( 1 + b f^\frac{1}{2} \simeq 1\). Moreover, using this along with \eqref{eq.carleman_choices_wp} again, reduces the estimate to
\begin{equation}
\label{eql.carleman_est_ptwise_2} - F' \cdot  | \bar{S}_w \psi |^2 - \frac{ \varepsilon f }{ 2 \bar{\rho} } | N \psi |^2 - \frac{ \varepsilon f }{ \bar{\rho} } F' \cdot \psi \bar{S}_w \psi - \frac{1}{2} \bar{\Box} \bar{w} \cdot \psi^2 \geq \frac{1}{4} a \cdot | \tilde{N} \psi |^2 - a^2 \varepsilon C \cdot f^{ - \frac{1}{2} } \psi^2 \text{.}
\end{equation}
For the remaining terms in \eqref{eq.carleman_id_ptwise}, using the definition of \(\mc{A}\) and equation \eqref{eq.carleman_choices_wp}, we get
\begin{equation}
\label{eql.carleman_est_ptwise_3} ( f \mc{A} )' + \frac{ \varepsilon f }{ \bar{\rho} } \mc{A} \geq \left(\frac{1}{2} b - \varepsilon C \right) a^2 f^{ - \frac{1}{2} }.
\end{equation}
Since \( -F'>0\), we use the estimate
\[
- \frac{1}{ 4 F' } | \bar{\mc{L}} \psi |^2 - F' | \bar{S}_w \psi |^2 \geqslant | \bar{\mc{L}} \psi | | \bar{S}_w \psi |,
\]
and equation \eqref{eq.carleman_id_ptwise}, to get
\begin{align}
\label{eql.carleman_est_ptwise_4} - \frac{1}{ 4 F' } | \bar{\mc{L}} \psi |^2 + \bar{\nabla}^\beta \bar{P}_\beta & \geq \frac{ \varepsilon f }{ 2 \bar{\rho} } ( | T \psi |^2 + \bar{g}^{ab} \bar{\nasla}_a \psi \bar{\nasla}_b \psi  - \bar{g}^{CD} \hat{\nabla}_C \psi \hat{\nabla}_D \psi ) + \frac{1}{4} a | \tilde{N} \psi |^2  \\
\notag & \qquad + \left( \frac{1}{2} b - \varepsilon C \right) a^2 f^{ - \frac{1}{2} } \cdot \psi^2 \text{.}
\end{align}
Finally, we use \eqref{eq.F_deriv} to obtain the coefficient of the \(| \bar{\mc{L}} \psi |^2\) term, and use the fact that \( \varepsilon \ll b \), to conclude that the claim is true.

\medskip

\noindent\textbf{Step II.} In this step we will go back from the (conjugated) function \(\psi\) to the (original) function \(z\). Define the 1-form \(\bar{P}^\star = \bar{P}^\star [z] \) on \( \bar{\Phi} ( \mf{U} \cap \mf{D} ) \) as
\begin{align}
\label{eq.carleman_rev_current} \bar{P}^\star_\beta &:= \bar{S} ( e^{ - F } z ) \bar{\nabla}_\beta ( e^{ - F } z ) - \frac{1}{2} \bar{\nabla}_\beta f \cdot \bar{\nabla}^\mu ( e^{ - F } z ) \bar{\nabla}_\mu ( e^{ - F } z ) + \bar{w} \cdot e^{ - F } z \bar{\nabla}_\beta ( e^{ - F } z ) \\
\notag &\qquad + \frac{1}{2} ( \mc{A} \bar{\nabla}_\beta f - \bar{\nabla}_\beta \bar{w} ) \cdot e^{ - 2 F } z^2.
\end{align}
Due to \eqref{eq.carleman_id_current}, we can see that \( \bar{P}^\star_\beta \) is related to \( \bar{P}_\beta\) by the conjugation \eqref{eq.conjug}. That is, \\ \( \bar{P}^\star_\beta (z)  = \bar{P}_\beta ( e^{-F} z )\). The next claim gives us the resulting point-wise Carleman estimate for \(z\). The idea is to substitute back the conjugation expression \eqref{eq.conjug} in terms of \(z\), and carefully estimate some terms that arise.
\begin{namedthm*}{\underline{Claim 2}}[]
On $\bar{\Phi} ( \mf{U} \cap \mf{D} )$, we have
\begin{align}
\label{eq.carleman_est_rev} \frac{1}{ 4 a } f \zeta_{a, b} | \bar{\Box} z |^2 + \bar{\nabla}^\beta \bar{P}^\star_\beta &\geq \frac{ \varepsilon }{ 16 \bar{\rho} } \zeta_{a, b} ( | u \cdot \partial_u z |^2 + | v \cdot \partial_v z |^2 + f \cdot \bar{g}^{ab} \bar{\nasla}_a z \bar{\nasla}_b z  - f \cdot \bar{g}^{CD} \hat{\nabla}_C z \hat{\nabla}_D z ) \\
\notag & \qquad + \frac{1}{8} b a^2 f^{- \frac{1}{2} } \zeta_{a, b} \cdot z^2.
\end{align}
\end{namedthm*}
\noindent\underline{\textit{Proof of claim}}: 
We start by putting back \(\psi = e^{-F} z\). Writing \eqref{eq.carleman_est_ptwise} in terms of \(z\), shows
\begin{align}
\label{eql.carleman_est_rev_1} \frac{1}{ 4 a } f e^{ -2 F } | \bar{\Box} z |^2 + \bar{\nabla}^\beta \bar{P}^\star_\beta &\geq \frac{ \varepsilon f }{ 2 \bar{\rho} } e^{ -2 F } ( | T z |^2 + \bar{g}^{ab} \bar{\nasla}_a z \bar{\nasla}_b z - \bar{g}^{CD} \hat{\nabla}_C z \hat{\nabla}_D z ) + \frac{1}{4} a | \tilde{N} ( e^{ - F } z ) |^2 \\
\notag & \qquad + \frac{1}{4} b a^2 f^{- \frac{1}{2} } e^{ - 2 F } \cdot z^2.
\end{align}
As we can see from \eqref{eq.carleman_est_wp}, the first order terms contain only the null coordinates and the angular coordinate derivatives. We already have the angular components in the above equation. Hence, we just have to express the remaining derivative terms in \eqref{eql.carleman_est_rev_1}, in \( (\partial_u,\partial_v)\). First, we want to replace the \(\tilde{N}\) on the RHS by \(N\). For this purpose, we use \eqref{eq.f_grad_wp}, \eqref{eq.TN}, \eqref{eq.carleman_choices_wp}, \eqref{eq.F_deriv}, and \eqref{eq.carleman_est_tilde}, to find that
\begin{align*}
a | \tilde{N} ( e^{-F} z ) |^2 + b a^2 e^{-2 F} f^{ - \frac{1}{2} } \cdot z^2 \gg \varepsilon f^\frac{1}{2} e^{-2 F} | N z |^2 \geqslant \frac{ \varepsilon f }{ 2 \bar{\rho} }  e^{-2 F} | N z |^2,
\end{align*}
where we used \eqref{eq.rho_est_wp} to get the last inequality. Then \eqref{eql.carleman_est_rev_1} reduces to 
\begin{align}
\label{eql.carleman_est_rev_2} \frac{1}{ 4 a } f e^{ -2 F } | \bar{\Box} z |^2 + \bar{\nabla}^\beta \bar{P}^\star_\beta & \geq \frac{ \varepsilon f }{ 2 \bar{\rho} } e^{ -2 F } ( | T z |^2 + | N z |^2 + \bar{g}^{ab} \bar{\nasla}_a z \bar{\nasla}_b z  - \bar{g}^{CD} \hat{\nabla}_C z \hat{\nabla}_D z ) \\
\notag & \qquad + \frac{1}{8} b a^2 f^{- \frac{1}{2} } e^{ - 2 F } \cdot z^2 \text{.}
\end{align}
Now, we will estimate the \(T, N\) terms by the (null coordinate) vector fields \(\partial_u, \partial_v\). From \eqref{eq.f} and \eqref{eq.TN}, we get
\[ | u \partial_u z |^2 + | v \partial_v z |^2 \leq 8 f ( | T z |^2 + | N z |^2 ) .\]
Using \eqref{eql.carleman_est_rev_2}, along with the above estimate, and \eqref{eq.F_deriv} (to get the Carleman weight) completes the proof of the claim. 

We now present some properties of \(\bar{P}^\star\) that will be used later when we integrate the point-wise estimate \eqref{eq.carleman_est_rev}; in particular, it comes up when dealing with the boundary integral terms.

\begin{namedthm*}{\underline{Claim 3}}[]
If \(z\) satisfies \eqref{eq.carleman_dirichlet_wp}, then 
\begin{equation}
\label{eq.carleman_rev_dirichlet} \bar{P}^\star ( \bar{\mc{N}} ) |_{ \bar{\Phi} ( \partial \mf{U} \cap \mf{D} ) } = \frac{1}{2} \zeta_{a, b} \cdot \bar{\mc{N}} f | \bar{\mc{N}} z |^2 |_{ \bar{\Phi} ( \partial \mf{U} \cap \mf{D} ) } \text{.}
\end{equation}
Also, on \(\bar{\Phi} ( \mf{U} \cap \mf{D} )\)
\begin{equation}
\label{eq.carleman_rev_currest} | \bar{P}^\star ( \bar{S} ) | \lesssim R^2 e^a f^{2 a} ( | \partial_u z |^2 + | \partial_v z |^2 + \bar{g}^{ab} \bar{\nasla}_a z \bar{\nasla}_b z  - \bar{g}^{CD} \hat{\nabla}_C z \hat{\nabla}_D z + a^2 f^{-1} z^2 ) \text{.}
\end{equation}
\end{namedthm*}

\noindent\underline{\textit{Proof of claim}}: 
Since \(z|_{ \bar{\Phi} ( \partial \mf{U} \cap \mf{D} ) } = 0\), we get for \(\bar{P}^\ast\) that
\[
\bar{P}^\star ( \bar{\mc{N}} ) |_{ \bar{\Phi} ( \partial \mf{U} \cap \mf{D} ) } = e^{ - 2 F } \left. \left( \bar{S} z \bar{\mc{N}} z - \frac{1}{2} \bar{\mc{N}} f \cdot | \bar{\mc{N}} z |^2 \right) \right|_{ \bar{\Phi} ( \partial \mf{U} \cap \mf{D} ) } \text{.}
\]
To check \eqref{eq.carleman_rev_dirichlet}, we note that \( \bar{S} z |_{ \bar{\Phi} ( \partial \mf{U} \cap \mf{D} ) } = \bar{\mc{N}} f \cdot \bar{\mc{N}} z |_{ \bar{\Phi} ( \partial \mf{U} \cap \mf{D} ) } \). To prove \eqref{eq.carleman_rev_currest}, we have to consider
\[ \bar{P}^\star (\bar{S}) = \bar{S} ( e^{ - F } z ) \bar{S} ( e^{ - F } z ) - \frac{1}{2} \bar{S} f \cdot \bar{\nabla}^\mu ( e^{ - F } z ) \bar{\nabla}_\mu ( e^{ - F } z ) + \bar{w} e^{ - F } z \cdot \bar{S} ( e^{ - F } z ) + \frac{1}{2} \left( \mc{A} \bar{S} f - \bar{S} \bar{w} \right) e^{ - 2 F } z^2. \]
For the last term in the above identity, using Proposition \ref{thm.f_rho_wp}, the definition of \(w\) from \eqref{eq.w_wp}, and \eqref{eq.carleman_choices_wp}, shows that
\[ | ( \mc{A} \bar{S} f - \bar{S} \bar{w} ) \cdot e^{ - 2 F } z^2 | \lesssim  R^2 e^{ -2 F } a^2 f^{-1} z^2. \]
For the remaining three terms of \( \bar{P}^\star ( \bar{S} ) \), we use equations \eqref{eq.f_grad_wp}, \eqref{eq.uvf_bound}, and \eqref{eq.carleman_domain_wp}, to conclude that all three of them are bounded from above by 
\[ R^2 e^{ - 2 F } ( | \partial_u z |^2 + | \partial_v z |^2 + \bar{g}^{ab} \bar{\nasla}_a z \bar{\nasla}_b z - \bar{g}^{CD} \hat{\nabla}_C z \hat{\nabla}_D z  + a^2 f^{-1} z^2 ).\]
Using \eqref{eq.carleman_choices_wp} and the fact that \( F=- a ( \log f + 2 b f^\frac{1}{2} ) \), shows that the claim is true.

\medskip

\noindent\textbf{Step III.} As mentioned earlier, we will integrate \eqref{eq.carleman_est_rev} over the region \( \bar{\Phi} ( \mf{U} \cap \mf{D} ) \).
Technically, we will integrate the pointwise Carleman estimate on domains that approximate the region \( \bar{\Phi} ( \mf{U} \cap \mf{D} ) \) in the limit.

\noindent For this purpose, let $\delta > 0$ be sufficiently small, and define the sets
\[
\mc{G}_\delta := \bar{\Phi} ( \mf{U} \cap \mf{D} ) \cap \{ f > \delta \} \text{,} \qquad \mc{H}_\delta := \bar{\Phi} ( \mf{U} \cap \mf{D} ) \cap \{ f = \delta \}.
\]
On \(\mc{G}_\delta, \mc{H}_\delta\), due to \eqref{eq.uvf_bound}, \eqref{eq.rho_est_wp}, and \eqref{eq.carleman_domain_wp},  we have that $r, \bar{\rho}, f$ are bounded from both above and below. 
Next, we integrate \eqref{eq.carleman_est_rev} over $\mc{G}_\delta$, take the limit as $\delta \searrow 0$, and use the monotone convergence theorem to get
\begin{align}
\label{eql.carleman_est_wp_1} \frac{1}{ 4 a } \int_{ \bar{\Phi} ( \mf{U} \cap \mf{D} ) } & \zeta_{a, b} f | \bar{\Box} z |^2 + \lim_{ \delta \searrow 0 } \int_{ \mc{G}_\delta } \bar{\nabla}^\alpha \bar{P}^\star_\alpha \\
\notag & \geq \frac{ \varepsilon }{ 16 } \int_{ \bar{\Phi} ( \mf{U} \cap \mf{D} ) } \zeta_{a, b} \bar{\rho}^{-1} ( | u \partial_u z |^2 + | v \partial_v z |^2 + f \bar{g}^{ab} \bar{\nasla}_a z \bar{\nasla}_b z  - f \bar{g}^{CD} \hat{\nabla}_C z \hat{\nabla}_D z ) \\
\notag & \quad + \frac{ b a^2 }{ 8 } \int_{ \bar{\Phi} ( \mf{U} \cap \mf{D} ) } \zeta_{a, b} f^{- \frac{1}{2} } z^2, 
\end{align}
if the limit term on the LHS exists.

Next, we will look at the limit term in the above equation. For this, note that the space-time boundary of \( \mc{G}_\delta\) consists of
\[ \partial \mc{G}_\delta = \big(\bar{\Phi} ( \partial \mf{U} \cap \mf{D} ) \cap \{ f > \delta \} \big) \ \bigcup \ \mc{H}_\delta. \]
For the first part, the outward unit normal is \( \bar{\mc{N}} \). For \( \mc{H}_\delta\), we see that the region \( \{f= \delta\} \) is a time-like hypersurface of $( \mf{D}, \bar{g} )$. The  outward unit normal of this hypersurface, towards \( \{f > \delta\}\), is given by $- f^{ - \frac{1}{2} } \bar{S}$. Hence, using the divergence theorem shows that
\begin{align*}
\int_{ \mc{G}_\delta } \bar{\nabla}^\alpha \bar{P}^\star_\alpha &= \int_{ \bar{\Phi} ( \partial \mf{U} \cap \mf{D} ) \cap \{ f > \delta \} } \bar{P}^\star ( \bar{\mc{N}} ) -  \int_{ \mc{H}_\delta } f^{ - \frac{1}{2} } \bar{P}^\star ( \bar{S} ) .
\end{align*}
Taking limit for the first term on the RHS of the above equation, and using \eqref{eq.carleman_rev_dirichlet} along with the fact that $z$ is uniformly $C^1$-bounded, shows
\[
\lim_{ \delta \searrow 0 } \int_{ \bar{\Phi} ( \partial \mf{U} \cap \mf{D} ) \cap \{ f > \delta \} } \bar{P}^\star ( \bar{\mc{N}} )  = \frac{1}{2} \int_{ \bar{\Phi} ( \partial \mf{U} \cap \mf{D} ) } \zeta_{a, b} \bar{\mc{N}} f \cdot | \bar{\mc{N}} z |^2 ,
\]
which gives us the boundary term present in \eqref{eq.carleman_est_wp}, with a factor of half.
Then, the limit term in \eqref{eql.carleman_est_wp_1} reduces to
\begin{align}
\label{eql.carleman_est_wp_2} \lim_{ \delta \searrow 0 } \int_{ \mc{G}_\delta } \bar{\nabla}^\alpha \bar{P}^\star_\alpha &=  \frac{1}{2} \int_{ \bar{\Phi} ( \partial \mf{U} \cap \mf{D} ) } \zeta_{a, b} \bar{\mc{N}} f \cdot | \bar{\mc{N}} z |^2  - \lim_{ \delta \searrow 0 } \delta^{ - \frac{1}{2} }\int_{ \mc{H}_\delta } \bar{P}^\star (\bar{S} ),
\end{align}
where we used the fact that \( f=\delta\) on \(\mc{H}_\delta\), for the final term. The integral term with \( \bar{P}^\star ( \bar{S} ) \) is estimated using \eqref{eq.carleman_rev_currest}, to get
\begin{align}
\notag \left| \lim_{ \delta \searrow 0 } \delta^{ - \frac{1}{2} } \int_{ \mc{H}_\delta } \bar{P}^\star ( \bar{S} ) \right| &\lesssim R^2 e^a \lim_{ \delta \searrow 0 } \delta^{ 2 a - \frac{1}{2} } \int_{ \mc{H}_\delta } ( | \partial_u z |^2 + | \partial_v z |^2 + \bar{g}^{ab} \bar{\nasla}_a z \bar{\nasla}_b z  - f \bar{g}^{CD} \hat{\nabla}_C z \hat{\nabla}_D z  ) \\
\notag &\qquad + R^2 e^a \lim_{ \delta \searrow 0 } \delta^{ 2 a - \frac{1}{2} } \int_{ \mc{H}_\delta } a^2 f^{-1} z^2  \\
&\lesssim C e^a a^2 \lim_{ \delta \searrow 0 } \delta^{ 2 a - \frac{3}{2} } \int_{ \mc{H}_\delta } 1 \text{,} \label{eql.carleman_est_wp_3}
\end{align}
where $C:= C(z,R) > 0$ is a constant.

Now, if we can show that the RHS of \eqref{eql.carleman_est_wp_3} is zero, then the proof will be complete. To show the limit is zero, first let \( \bar{g}_\mc{H}\) denote the metric induced by \(\bar{g}\) on \(\mc{H}_\delta\). Let \( \bar\partial_\tau\) denote the coordinate vector field along \(\tau\) on \( \mc{H}_\delta\). The vector fields \( \bar\partial_\tau\) and \( T \) point in the same direction, hence on \( \mc{H}_\delta\)
\[ \bar\partial_\tau = k T, \]
for some scalar \(k\). Since \( T=  f^{-\frac{1}{2}} (-u\partial_u + v \partial_v) \) and \( \tau = u+v\), we have
\[ 1 =\bar \partial_\tau \tau = k T \tau = k f^{-\frac{1}{2}} (-u+v)= k r  f^{-\frac{1}{2}}. \]
Thus, \( \bar\partial_\tau = r^{-1} (-u \partial_u + v \partial_v) \). Then
\[
\bar{g}_\mc{H} (\bar\partial_\tau, \bar\partial_\tau) = - 4 r^{-2} f \text{,}
\]
and the angular components remain unchanged. Hence, we have the following
\begin{align*}
\bar{g}_\mc{H} = -4 r^{-2} f d\tau^2 - \tau^2 \mathring{\gamma}_{\Sph^{m-1}} + \bar{\rho}^2 \mathring{\gamma}_{\Sph^{n-1}}.
\end{align*}
Now we calculate the integral over \(\mc{H}_\delta\) as follows
\begin{align*}
\lim_{ \delta \searrow 0 } \delta^{ 2 a - \frac{3}{2} } \int_{ \mc{H}_\delta } 1 & \lesssim \lim_{ \delta \searrow 0 } \delta^{ 2 a - \frac{3}{2} } \int_0^{ 2 R} d\tau \int_{\Sph^{m-1}} d \mathring{\gamma}_{\omega_t}   \int_{\Sph^{n-1}} \sqrt{|\bar{g}_\mc{H}|}\ d\mathring{\gamma}_{\omega_x} \\
& \lesssim \lim_{ \delta \searrow 0 } \delta^{ 2 a - \frac{3}{2} } \int_0^{ 2 R} d\tau \int_{\Sph^{m-1}} d \mathring{\gamma}_{\omega_t}  \int_{\Sph^{n-1}} 2 f^{\frac{1}{2}} r^{-1} \tau^{m-1} \bar{\rho}^{n-1}|_{(\tau, r= \sqrt{\tau^2 + 4\delta}, \omega_x, \omega_t)} d\mathring{\gamma}_{\omega_x}  \\
& \lesssim  R^{m+n-2} \lim_{ \delta \searrow 0 } \delta^{ 2 a - 1} \int_0^{ 2 R} r^{-1}|_{(\tau, r= \sqrt{\tau^2 + 4\delta}, \omega_x, \omega_t)}  d\tau \\
& \lesssim R^{m+n-2} \lim_{ \delta \searrow 0 } \delta^{ 2 a - 1 } \int_0^{2 R} \delta^{-\frac{1}{2}} d\tau\\
& \lesssim R^{m+n-1} \lim_{ \delta \searrow 0 } \delta^{ 2 a - \frac{3}{2} }\\
& =0.
\end{align*} 
Using this along with \eqref{eql.carleman_est_wp_1}-\eqref{eql.carleman_est_wp_3} shows that \eqref{eq.carleman_est_wp} is true and this completes the proof of Theorem \ref{thm.carleman_est_wp}. 
\end{proof}

\subsection{Proof of Theorem \ref{thm.carl_bdry}}

Now we will see how the warped Carleman estimate \eqref{eq.carleman_est_wp} can be used to obtain the Carleman estimate \eqref{eq.carleman_est} (in the original geometry). Mainly, we will be pulling back terms in \eqref{eq.carleman_est_wp} using the diffeomorphism \(\bar{\Phi}\).

\begin{proof}
Let us assume that the hypotheses of the Theorem holds. 
Let \(\bar{z}\) be the uniformly $C^1$-bounded function defined as
\begin{equation}
\label{eql.carleman_est_0} \bar{z} := z \circ \bar{\Phi}^{-1} \in C^2 ( \bar{\Phi} ( \mf{U} \cap \mf{D} ) ) \cap C^1 ( \bar{\Phi} ( \bar{\mf{U}} \cap \mf{D} ) ).
\end{equation}
Then, \(\bar{z}\) satisfies the Dirichlet boundary condition \eqref{eq.carleman_dirichlet_wp}. Applying Theorem \ref{thm.carleman_est_wp}, with \(z\) in \eqref{eq.carleman_est_wp} replaced by $\xi^\frac{m+n-2}{2} \bar{z}$, we obtain
\begin{align*}
\frac{ b a^2 }{4} \int_{ \bar{\Phi} ( \mf{U} \cap \mf{D} ) } & \zeta_{a, b} f^{- \frac{1}{2} } \xi^{m+n-2} \bar{z}^2 \cdot d \bar{g}\\
& \quad + \frac{ \varepsilon }{ 8 } \int_{ \bar{\Phi} ( \mf{U} \cap \mf{D} ) } \zeta_{a, b} \bar{\rho}^{-1} \big[ | u \cdot \partial_u ( \xi^\frac{m+n-2}{2} \bar{z} ) |^2 + | v \cdot \partial_v ( \xi^\frac{m+n-2}{2} \bar{z} ) |^2  \\
& \qquad \qquad \ \qquad \qquad \qquad + f \bar{g}^{ab} \xi^{m+n-2} \bar{\nasla}_a \bar{z} \bar{\nasla}_b \bar{z} - f\bar{g}^{CD}  \xi^{m+n-2}  \hat{\nabla}_C z \hat{\nabla}_D z  \big] \cdot d \bar{g} \\
\numberthis \label{eql.carleman_est_00} & \qquad \qquad \leqslant \frac{1}{2 a} \int_{ \bar{\Phi} ( \mf{U} \cap \mf{D} ) } \zeta_{a, b} f | \bar{\Box} ( \xi^\frac{m+n-2}{2} \bar{z} ) |^2 \cdot d \bar{g} + \int_{ \bar{\Phi} ( \partial \mf{U} \cap \mf{D} ) } \zeta_{a, b} \bar{\mc{N}} f | \bar{\mc{N}} ( \xi^\frac{m+n-2}{2} \bar{z} ) |^2 \cdot d \bar{g}
\end{align*}
\textbf{RHS Terms:} The first step is to estimate each term in the RHS of the above equation by terms with integral over the region \( \mf{U} \cap \mf{D}\). For this purpose, note that adding and subtracting the same term, we get
\[
\bar{\Box} ( \xi^\frac{m+n-2}{2} \bar{z} ) =\left[ \bar{\square} + \frac{ (n-1)(m+n-2)\varepsilon}{2 (\bar{\rho}\circ \bar{\Phi})} \right] ( \xi^{\frac{m+n-2}{2}} \bar{z} ) -\frac{ (n-1)(m+n-2)\varepsilon}{2 (\bar{\rho}\circ \bar{\Phi})}  ( \xi^{\frac{m+n-2}{2}} \bar{z} ) \text{.}
\]
Then, using the change of wave operator formula from \eqref{eq.conf_wave} gives the following inequality for the wave operator term in \eqref{eql.carleman_est_00}
\begin{align*}
\numberthis \label{eql.carleman_est_12} \int_{ \bar{\Phi} ( \mf{U} \cap \mf{D} ) } \zeta_{a, b} f | \bar{\Box} ( \xi^\frac{m+n-2}{2} \bar{z} ) |^2 \cdot d \bar{g}
& \lesssim   \int_{ \mf{U} \cap \mf{D} } \zeta_{a, b; \varepsilon} (\xi^{-1} f) \xi^{m+n+2}|\Box z|^2 \cdot \xi^{-m-n} dg \\
& \qquad + (m+n)^4 \varepsilon^2 \int_{ \mf{U} \cap \mf{D} } \zeta_{a, b; \varepsilon} (\xi^{-1} f) \xi^2 r^{-2} \xi^{m+n-2} z^2  \cdot \xi^{-m-n} dg \\
& \lesssim \int_{ \mf{U} \cap \mf{D} } \zeta_{a, b; \varepsilon} f | \Box z |^2 \cdot d g + (m+n)^4 \varepsilon^2 \int_{ \mf{U} \cap \mf{D} } \zeta_{a, b; \varepsilon} r^{-2} f z^2 \cdot d g \text{,}
\end{align*}
where we used \eqref{eq.rf_conf} to make the following substitutions \( \bar{\rho} \mapsto \xi^{-1}r,\ f \mapsto \xi^{-1}f,\ \zeta_{a, b} \mapsto \zeta_{a, b; \varepsilon} \). The factor \(\xi^{-m-n}\) arises due to the change of volume forms; any power of \(\xi\) that survives is taken care by using \eqref{eq.conf_comp}.

Now we will deal with the boundary integral term in \eqref{eql.carleman_est_00}. Using Proposition \ref{thm.gtc_conf}, we get
\begin{align*}
\bar{\Phi} ( \partial \mf{U} \cap \mf{D} ) & \xmapsto{\bar{\Phi}^{-1}} \partial \mf{U} \cap \mf{D}, \\
\bar{\mc{N}} & \xmapsto{\bar{\Phi}^{-1}} \xi \mc{N},
\end{align*}
where \(\mc{N}\) is the outer pointing unit normal of \( \mf{U}\) (with respect to \(g\)). Hence, the boundary term satisfies the following equality
\begin{align}
\label{eql.carleman_est_13} \int_{ \bar{\Phi} ( \partial \mf{U} \cap \mf{D} ) } \zeta_{a, b} \bar{\mc{N}} f | \bar{\mc{N}} ( \xi^\frac{m+n-2}{2} \bar{z} ) |^2 \cdot d \bar{g} &= \int_{ \partial \mf{U} \cap \mf{D} } \zeta_{a, b; \varepsilon} ( \xi \mc{N} ) ( \xi^{-1} f ) | ( \xi \mc{N} ) ( \xi^\frac{m+n-2}{2} z ) |^2 \cdot \xi^{-m-n+1} d g \\
\notag &= \int_{ \partial \mf{U} \cap \mf{D} } \zeta_{a, b; \varepsilon} \xi^2 \mc{N} ( \xi^{-1} f ) | \mc{N} z |^2 \cdot d g \text{,}
\end{align}
where we used the assumption \( z |_{ \partial \mf{U} \cap \mf{D} } = 0\) for the final equality (hence, the only term that survives from \(\mc{N} ( \xi^\frac{m+n-2}{2} z )\) is when \( \mc{N}\) hits \(z\)). Also, the factor \( \xi^{-m-n+1} \) comes in because the region \( \partial \mf{U} \cap \mf{D} \) has dimension \( m+n-1 \), one less than the dimension of \( \mf{U} \cap \mf{D}\).

\medskip

\noindent \textbf{LHS Terms:} Now we will show that the terms in the LHS of \eqref{eql.carleman_est_00} are bounded from below by terms with integral over \( \mf{U}\cap \mf{D}\). First, for the zeroth-order term and the angular derivative terms, we use the same argument as we did for showing \eqref{eql.carleman_est_12}, to obtain
\begin{align*}
\numberthis \label{eql.carleman_est_10} \int_{ \bar{\Phi} ( \mf{U} \cap \mf{D} ) } &  \zeta_{a, b} f^{- \frac{1}{2} } \xi^{m+n-2} \bar{z}^2 \cdot d \bar{g} + \int_{ \bar{\Phi} ( \mf{U} \cap \mf{D} ) } \zeta_{a, b} \bar{\rho}^{-1} f \bar{g}^{ab} \xi^{m+n-2} \bar{\nasla}_a \bar{z} \bar{\nasla}_b \bar{z} \cdot d \bar{g} \\
& - \int_{ \bar{\Phi} ( \mf{U} \cap \mf{D} ) } \zeta_{a, b} \bar{\rho}^{-1} f \bar{g}^{CD}  \xi^{m+n-2}  \hat{\nabla}_C z \hat{\nabla}_D z \cdot d \bar{g} \\
& \qquad \quad \gtrsim \int_{ \mf{U} \cap \mf{D} } \zeta_{a, b; \varepsilon} f^{ - \frac{1}{2} } z^2 \cdot d g + \int_{ \mf{U} \cap \mf{D} } \zeta_{a, b; \varepsilon} r^{-1} f g^{ab} \nasla_a z \nasla_b z \cdot d g \\
& \qquad \qquad \quad - \int_{ \mf{U} \cap \mf{D} } \zeta_{a, b; \varepsilon} r^{-1} f g^{CD} \tilde{\nabla}_C z \tilde{\nabla}_D z   \cdot d g.
\end{align*}
Next, we consider the integral term with the null coordinate derivatives. Using \eqref{eq.conformal}, \eqref{eq.conf_comp}, \eqref{eq.conf_d_comp}, and \eqref{eq.deriv_comp} shows that for the \(u\)-derivative term we get the estimate
\begin{align}
\label{eql.carleman_est_14} \int_{ \bar{\Phi} ( \mf{U} \cap \mf{D} ) } \zeta_{a, b} \bar{\rho}^{-1} | u \cdot \partial_u ( \xi^\frac{m+n-2}{2} \bar{z} ) |^2 \cdot d \bar{g} &\geq  C_1 \int_{ \mf{U} \cap \mf{D} } \zeta_{a, b; \varepsilon} r^{-1} | u \cdot \partial_u z |^2 \cdot d g \\
\notag &\qquad - C_2 \varepsilon^2 (m+n)^2 \int_{ \mf{U} \cap \mf{D} } \zeta_{a, b; \varepsilon} r^{-1} u^2 z^2 \cdot d g \text{,}
\end{align}
for some universal constants $C_1, C_2 > 0$.
Similarly, for the \(v\)-derivative, we have
\begin{align}
\label{eql.carleman_est_15} \int_{ \bar{\Phi} ( \mf{U} \cap \mf{D} ) } \zeta_{a, b} \bar{\rho}^{-1} | v \cdot \partial_v ( \xi^\frac{m+n-2}{2} \bar{z} ) |^2 \cdot d \bar{g} &\geq C_1 \int_{ \mf{U} \cap \mf{D} } \zeta_{a, b; \varepsilon} r^{-1} | v \cdot \partial_v z |^2 \cdot d g \\
\notag &\qquad - C_2 \varepsilon^2 (m+n)^2 \int_{ \mf{U} \cap \mf{D} } \zeta_{a, b; \varepsilon} r^{-1} v^2 z^2 \cdot d g \text{.}
\end{align}
Using \eqref{eql.carleman_est_12}--\eqref{eql.carleman_est_15} shows that \eqref{eql.carleman_est_00} reduces to
\begin{align*}
\numberthis \label{eql.carleman_est_20} C \varepsilon & \int_{ \mf{U} \cap \mf{D} } \zeta_{a, b; \varepsilon} r^{-1} [ | u \cdot \partial_u z |^2 + | v \cdot \partial_v z |^2 + f g^{ab} \nasla_a z \nasla_b z - f g^{CD} \tilde{\nabla}_C z \tilde{\nabla}_D z ] \cdot d g \\
& \qquad + C b a^2 \int_{ \mf{U} \cap \mf{D} } \zeta_{a, b; \varepsilon} f^{- \frac{1}{2} } z^2 \cdot d g - \frac{ C'' (m+n)^4 \varepsilon^2 }{a} \int_{ \mf{U} \cap \mf{D} } \zeta_{a, b; \varepsilon} r^{-2} f z^2 \cdot d g \\
& \qquad - C'' \varepsilon^3 (m+n)^2 \int_{ \mf{U} \cap \mf{D} } \zeta_{a, b; \varepsilon} r^{-1} ( u^2 + v^2 ) z^2 \cdot d g \\
 & \qquad \qquad \leqslant \frac{1}{a} \int_{ \mf{U} \cap \mf{D} } \zeta_{a, b; \varepsilon} f | \Box z |^2 \cdot d g + C' \int_{ \partial \mf{U} \cap \mf{D} } \zeta_{a, b; \varepsilon} \xi^2 \mc{N} ( \xi^{-1} f ) | \mc{N} z |^2 \cdot d g \text{,}
\end{align*}
for some positive constants $C, C', C''$.
Next, we will absorb some terms in the LHS of the above equation. By \eqref{eq.carleman_domain}, \eqref{eq.carleman_choices}, and \eqref{eq.uvf_bound}, we get
\begin{align*}
\frac{ \varepsilon^2 (m+n)^4 }{a} \cdot \frac{f}{ r^2 } \ll b a^2 f^{ - \frac{1}{2} } \text{,}\qquad \frac{ \varepsilon^3 (m+n)^2 ( u^2 + v^2 ) }{r}  \ll b a^2 f^{ - \frac{1}{2} } \text{.}
\end{align*}
Thus, both the negative terms in the LHS of \eqref{eql.carleman_est_20} can be absorbed into the second term, which is positive. We also calculate that
\[
\xi^2 \mc{N} ( \xi^{-1} f ) = \xi \mc{N} f - f \mc{N} \xi = ( 1 - \varepsilon r ) \mc{N} f + \varepsilon f \mc{N} r \text{,}
\]
where we used \( \xi = 1- \varepsilon r + \varepsilon^2 f \) from Remark \ref{rmk_xirf}. Finally, putting together the above expression and \eqref{eql.carleman_est_20}, and also accounting for the absorption of the negative terms, we get
\begin{align*}
C \varepsilon & \int_{ \mf{U} \cap \mf{D} } \zeta_{a, b; \varepsilon} \cdot r^{-1} [ | u \cdot \partial_u z |^2 + | v \cdot \partial_v z |^2 + f g^{ab} \nasla_a z \nasla_b z - f g^{CD} \tilde{\nabla}_C z \tilde{\nabla}_D z  ] \cdot d g \\
& \quad + C b a^2 \int_{ \mf{U} \cap \mf{D} } \zeta_{a, b; \varepsilon} \cdot f^{- \frac{1}{2} } z^2 \cdot d g \\
& \qquad \qquad \leqslant \frac{1}{a} \int_{ \mf{U} \cap \mf{D} }  \zeta_{a, b; \varepsilon} \cdot f | \Box z |^2 \cdot d g + C' \int_{ \partial \mf{U} \cap \mf{D} } \zeta_{a, b; \varepsilon} [( 1 - \varepsilon r ) \mc{N} f + \varepsilon f \mc{N}r] | \mc{N} z |^2 \cdot d g \text{,}
\end{align*}
for some constants $C, C' > 0$. This completes the proof of Theorem \ref{thm.carl_bdry}.
\end{proof}

\subsection{Interior Carleman Estimate}
From now on, we assume that our domain is static in time.
We use a type of \emph{hidden regularity} argument adapted to our setting, to derive an interior Carleman estimate from Theorem \ref{thm.carl_bdry}. 

First, let \( \Omega \subset \R^n \) be an open and bounded subset, with smooth boundary \(\partial \Omega\). Define \( \Gamma_+ \subset \partial \Omega \) as follows
\begin{equation}\label{eq.thm.carl_int_1}
\Gamma_+ := \{\nu r>0 \} = \{ x \in \partial \Omega | x\cdot \nu >0 \},
\end{equation}
where \(\nu\) denotes the outward pointing unit normal of \(\Omega\). 

\begin{theorem}[Interior Carleman Estimate]\label{thm.carl_int}
Let \( \mf{U}\) be defined as follows
\[\mf{U} := \R^m \times \Omega,
\]
and fix \( R>0 \) such that
\begin{equation}\label{eq.carl_int_domain}
\Omega \subseteq \{ r<R \} .
\end{equation}
Let \( \varepsilon, a, b >0 \) be constants such that:
\begin{equation}\label{eq.carl_int_choices}
a \geqslant (m+n)^2, \qquad a \gg R, \qquad \varepsilon \ll_{m,n} b \ll R^{-1}.
\end{equation} 
Fix \(\sigma >0 \) and define \( \omega \), a subset of \( \Omega\), as
\begin{equation}
    \omega := \mathcal{O}_\sigma (\Gamma_+) \cap \Omega, \label{eq:3}
\end{equation}
where \[ \mc{O}_\sigma(\Gamma_+) := \{ y \in \R^n : |y-x|< \sigma, \text{ for some } x \in \Gamma_+  \} .\] Then, there exists \(C>0\) such that for any \( z \in \mc{C}^2({\mf{U}})\cap \mc{C}^1(\bar{\mf{U}}) \) satisfying \(z|_{\partial\mf{U} \cap \mf{D}} = 0,\) we have the following estimate
\begin{align*} 
C\varepsilon \int_{\mf{U} \cap \mf{D}} & \zeta_{a,b;\varepsilon}r^{-1}(|u\partial_u z |^2 + |v\partial_v z |^2 + f g^{ab}\slashed\nabla_a z \slashed\nabla_b z - f g^{CD} \tilde\nabla_C z \tilde\nabla _D z  ) + Cba^2\int_{\mf{U} \cap \mf{D}}\zeta_{a,b;\varepsilon} f^{-\frac{1}{2}} z^2 \\
& \leqslant \frac{1}{a} \int_{\mf{U}\cap \mf{D}} \zeta_{a,b;\varepsilon} f |\square z|^2 + a^2 R^3 \int_{( \R^m \times \omega ) \cap \mf{D}} \zeta_{a,b;\varepsilon} f^{-2} |\nabla_t z|^2  \numberthis \label{eq.carl_int}\\
& \qquad + a^4 R^4  \int_{( \R^m \times \omega ) \cap \mf{D}} \zeta_{a,b;\varepsilon} f^{-3} z^2,
\end{align*}
where \( \zeta_{a,b;\varepsilon} \) is defined as in \eqref{eq.carleman_weight}.
\end{theorem}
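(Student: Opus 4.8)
The plan is to deduce the interior estimate from the boundary estimate of Theorem~\ref{thm.carl_bdry} by estimating the boundary term $\int_{\partial\mf{U}\cap\mf{D}}\zeta_{a,b;\varepsilon}[(1-\varepsilon r)\mc{N}f+\varepsilon f\mc{N}r]|\mc{N}z|^2$ in terms of interior data supported near $\Gamma_+$. The first observation is that, since $\mf{U}=\R^m\times\Omega$ is static in time, its unit normal $\mc{N}$ with respect to $g$ is purely spatial and agrees with the Euclidean outward normal $\nu$ of $\partial\Omega$; hence $\mc{N}f=\frac{1}{4}\mc{N}(r^2-\tau^2)=\frac{1}{2}r\,\nu r$ and $\mc{N}r=\nu r$, so the bracket $[(1-\varepsilon r)\mc{N}f+\varepsilon f\mc{N}r]$ is controlled by $r\,\nu r$ up to constants (using $\varepsilon\ll R^{-1}$ and the bounds of Remark~\ref{rmk.uvfbound}). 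In particular this quantity is nonpositive on $\partial\Omega\setminus\overline{\Gamma_+}$, where $\nu r\le 0$, so after moving those contributions to the good side we are left only with an integral over $\Gamma_+$ (within $\mf{D}$). Since $\Gamma_+\subset\mc{O}_\sigma(\Gamma_+)$, the boundary integral is supported over the part of $\partial\mf{U}$ lying above $\omega$.

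The core of the argument is then a hidden-regularity / multiplier estimate: the boundary flux $\int_{(\R^m\times\Gamma_+)\cap\mf{D}}\zeta_{a,b;\varepsilon}\,r\,\nu r\,|\mc{N}z|^2$ must be absorbed into an interior integral over $(\R^m\times\omega)\cap\mf{D}$. I would introduce a smooth spatial cutoff $\chi$ with $\chi\equiv 1$ near $\Gamma_+$, $\mathrm{supp}\,\chi\subset\omega$, and contract the wave equation for $z$ against a first-order multiplier of the form $\chi\,\beta^\alpha\nabla_\alpha z$ (plus a zeroth-order correction $\chi\,\gamma\,z$), where $\beta$ is a vector field that is spatial, transverse to $\partial\Omega$ along $\Gamma_+$ — for instance built from $\nabla x\cdot\nu$ or simply $\nabla r$ suitably modified — and the weights $\beta,\gamma$ carry appropriate powers of $\zeta_{a,b;\varepsilon}$ and $f$. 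Integrating $\nabla^\alpha(\text{current})$ by parts produces exactly the boundary term $r\,\nu r\,|\mc{N}z|^2$ on $\Gamma_+$ on one side, and on the other side bulk terms supported in $\mathrm{supp}\,\chi\subset\omega$: these are quadratic in $\nabla z$ and $z$, plus a $\square z$ term, plus terms where derivatives hit $\chi$, $\zeta_{a,b;\varepsilon}$ or the powers of $f$. One then estimates $|\nabla_x z|$ on the support of $\chi$ crudely by $|u\partial_u z|^2+|v\partial_v z|^2+f\,g^{ab}\nasla_a z\nasla_b z$ divided by the relevant power of $f$ (using $f\simeq r^2$ away from the cone, and the blow-up of $f^{-1}$ near the cone accounting for the $f^{-2},f^{-3}$ weights in \eqref{eq.carl_int}), and $|\nabla_t z|$ directly; the powers $a^2R^3$, $a^4R^4$ arise from the derivatives of $F$ (which carry factors of $a$) and from the bounds $f<r^2<R^2$. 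The $|\square z|^2$ contribution from the multiplier estimate is absorbed into the $\frac{1}{a}\int\zeta_{a,b;\varepsilon}f|\square z|^2$ term on the right using $a\gg R$.

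The last step is bookkeeping: substitute the resulting bound for the boundary term back into \eqref{eq.carleman_est}, and absorb into the left-hand side any interior contributions from $\mathrm{supp}\,\chi$ that are \emph{not} localised to $\omega$ but genuinely spread over $\mf{U}\cap\mf{D}$ — here one uses that the dangerous bulk pieces of the multiplier identity are all multiplied by $\chi$ and hence already localised, while the genuinely global pieces come with a small constant (a power of $\varepsilon$, or of $R/a$) relative to the good left-hand terms $C\varepsilon\int\zeta_{a,b;\varepsilon}r^{-1}(\dots)$ and $Cba^2\int\zeta_{a,b;\varepsilon}f^{-1/2}z^2$, using \eqref{eq.carl_int_choices}. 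The main obstacle I anticipate is the design of the multiplier: one must choose the vector field $\beta$ and the $f$- and $\zeta$-weights so that (i) the $\Gamma_+$ boundary term produced is exactly (a positive multiple of) $r\,\nu r\,|\mc{N}z|^2$ with \emph{no} tangential-derivative boundary contributions surviving (which is where the Dirichlet condition $z|_{\partial\mf{U}\cap\mf{D}}=0$ and the fact that $\Gamma_+$ is relatively open in $\partial\Omega$ get used to kill $\slashed\nabla z$ on the boundary), and (ii) the bulk error terms come out with exactly the weights $f^{-2}$ and $f^{-3}$ claimed, so that the singular behaviour near $\{f=0\}$ is matched rather than worsened; getting the exponents to line up, and tracking the sign of the commutator terms between $\beta$ and $f$, is the delicate computational heart of the proof.
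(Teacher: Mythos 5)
Your framework is the right one: the paper does indeed deduce the interior estimate from Theorem~\ref{thm.carl_bdry} by a hidden-regularity argument, first observing that $\mc{N}$ is purely spatial so that $(1-\varepsilon r)\mc{N}f + \varepsilon f\mc{N}r \leqslant r\,\nu r$, discarding the contribution where $\nu r\leq 0$, and then contracting $\square z$ against a first-order multiplier of the form $\rho\,\zeta_{a,b;\varepsilon}\,h^\beta\nabla_\beta z$, with $h$ extending $\nu$ and $\rho$ a cut-off supported in $\mc{O}_{\sigma/2}(\Gamma_+)$. Integration by parts, combined with the derivative bound $|\nabla^\alpha\zeta_{a,b;\varepsilon}| \lesssim aR\,\zeta_{a,b;\varepsilon}f^{-1}$ from Lemma~\ref{thm.der_carl_est}, yields \eqref{eq:g2}, which bounds the boundary flux by $\frac{1}{a}\int\zeta_{a,b;\varepsilon}f|\square z|^2 + aR^2\int_{(\R^m\times\mc{O}_{\sigma/2}(\Gamma_+))\cap\mf{D}}\zeta_{a,b;\varepsilon}f^{-1}(|\nabla_x z|^2 + |\nabla_t z|^2)$.

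The genuine gap is what you do with the resulting $|\nabla_x z|^2$ term. You propose to bound it crudely by $|u\partial_u z|^2 + |v\partial_v z|^2 + f\,g^{ab}\nasla_a z\nasla_b z$ divided by powers of $f$ and then absorb into the left-hand side of \eqref{eq.carleman_est}. This fails for two reasons. First, the pointwise estimate forces a weight of at least $f^{-2}$ in front of $|u\partial_u z|^2 + |v\partial_v z|^2$ (since $(-u)^{-2} = v^2/f^2 \lesssim R^2 f^{-2}$ and symmetrically), so after multiplying by $aR^2\,\zeta_{a,b;\varepsilon}f^{-1}$ you are left with $aR^4\,\zeta_{a,b;\varepsilon}f^{-3}(\cdots)$; to absorb this into $C\varepsilon\,\zeta_{a,b;\varepsilon}r^{-1}(\cdots)$ you would need $aR^4 f^{-3}\ll\varepsilon r^{-1}$ pointwise on $\omega\cap\mf{D}$, which is impossible near $\{f=0\}$ where the left side blows up. Second, even if you could absorb, the resulting right-hand side would not be that of \eqref{eq.carl_int}, whose $\omega$-terms involve only $|\nabla_t z|^2$ and $z^2$ (this is not cosmetic: only these can later be rewritten in terms of $\phi$ once $z$ is the time-integral of $\phi$).

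What the paper does instead — and this is the step you are missing — is a \emph{second}, zeroth-order multiplier argument: it introduces a further cut-off $\rho_1$ supported in $\omega$ with $\rho_1\equiv 1$ on $\mc{O}_{\sigma/2}(\Gamma_+)$, sets $\eta := \rho_1^2\,\zeta_{a,b;\varepsilon}f^{-1}$, and integrates $\eta\,z\,\square z$ by parts to obtain
\[
\int_{\mf{U}\cap\mf{D}}\eta\,|\nabla_x z|^2 \leqslant \int_{\mf{U}\cap\mf{D}}|\eta\,z\,\square z| + \int_{\mf{U}\cap\mf{D}}|z\,\nabla_t z\cdot\nabla_t\eta| + \int_{\mf{U}\cap\mf{D}}\eta\,|\nabla_t z|^2 - \int_{\mf{U}\cap\mf{D}} z\,\nabla_x z\cdot\nabla_x\eta .
\]
Cauchy--Schwarz, the weight-derivative bound, and absorption of the one remaining $|\nabla_x z|^2$ piece into the left then converts the entire $|\nabla_x z|^2$ contribution into $|\square z|^2$, $|\nabla_t z|^2$ and $z^2$ terms with exactly the $f^{-2}$ and $f^{-3}$ weights of \eqref{eq.carl_int}. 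In short: the wave equation itself must be used a second time to trade spatial gradients for temporal ones; a pointwise interpolation against the Carleman-left-hand-side quantities cannot do this.
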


\begin{remark} Note that, Theorem \ref{thm.carl_bdry} is applicable to a domain with moving boundary as well. But, we assume in Theorem \ref{thm.carl_int} that the domain must have a static boundary. This is primarily done to make the result more suitable for solving the observability problem. However, Theorem \ref{thm.carl_int} can be easily generalised to moving boundary domains.
\end{remark}

\noindent Now we present a lemma which will be used in the proof of the above theorem.
\begin{lemma}\label{thm.der_carl_est}
Assume the hypothesis of Theorem \ref{thm.carl_int}. In the region \(\mf{U}\cap \mf{D}\) we have the following estimate for the Carleman weight
\begin{equation}\label{prf.der_zeta}
|\nabla^\alpha\zeta_{a,b;\varepsilon}| \lesssim a R \zeta_{a,b;\varepsilon} f^{-1},
\end{equation}
where \(\alpha\) represents derivatives in the Cartesian coordinates.
\end{lemma}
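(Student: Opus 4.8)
The plan is to compute the Cartesian gradient of $\zeta_{a,b;\varepsilon}$ directly from its explicit formula \eqref{eq.carleman_weight} and then bound each factor using the estimates already established on $\mf{U}\cap\mf{D}$. Writing $\zeta_{a,b;\varepsilon} = e^{-2F}$ with $F = -a(\log f + 2bf^{1/2})$ in the \emph{warped} weight is clean, but here we are in the original geometry, so instead I would set $\zeta_{a,b;\varepsilon} = \Psi^{2a}$ where
\[
\Psi := \frac{f}{(1+\varepsilon u)(1-\varepsilon v)} \cdot \exp\!\left[\frac{2bf^{1/2}}{(1-\varepsilon u)^{1/2}(1+\varepsilon v)^{1/2}}\right],
\]
so that $\nabla^\alpha \zeta_{a,b;\varepsilon} = 2a\,\Psi^{2a-1}\nabla^\alpha\Psi = 2a\,\zeta_{a,b;\varepsilon}\,\Psi^{-1}\nabla^\alpha\Psi$. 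Thus the whole estimate reduces to showing
\[
|\Psi^{-1}\nabla^\alpha\Psi| = |\nabla^\alpha \log\Psi| \lesssim R f^{-1}.
\]

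To do this I would take the logarithm: $\log\Psi = \log f - \log(1+\varepsilon u) - \log(1-\varepsilon v) + 2bf^{1/2}(1-\varepsilon u)^{-1/2}(1+\varepsilon v)^{-1/2}$, and differentiate term by term in the Cartesian coordinates. The key facts I will use are: (i) $\nabla^\alpha f$ is controlled since $f = \tfrac14(|x|^2-|t|^2)$ gives $|\nabla^\alpha f| \lesssim r \lesssim R$ on $\{r<R\}$, and more precisely $\nabla^\sharp f = \tfrac12(u\partial_u + v\partial_v)$ with $|u|,|v| < r < R$ on $\mf{D}$ by Remark \ref{rmk.uvfbound}; (ii) $|\nabla^\alpha u|, |\nabla^\alpha v| \lesssim 1$; (iii) the smallness conditions $\varepsilon \ll b \ll R^{-1}$ and $0 < -u, v < r < R$ from \eqref{eq.carleman_choices} and \eqref{eq.uvf_bound} imply $1+\varepsilon u \simeq 1$, $1-\varepsilon v \simeq 1$, $(1-\varepsilon u)^{1/2}(1+\varepsilon v)^{1/2} \simeq 1$. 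Then the $\nabla\log f$ term contributes $f^{-1}\nabla f$, which is $\lesssim R f^{-1}$; the two $\log(1\pm\varepsilon\cdot)$ terms contribute something of size $O(\varepsilon) = O(R^{-1}) \lesssim Rf^{-1}$ since $f < r^2 < R^2$; and the exponential term, after differentiation, produces factors like $b f^{-1/2}\nabla f$ and $b f^{1/2}\nabla(\varepsilon u)$, each of which is $\lesssim b \lesssim R^{-1} \lesssim R f^{-1}$ again using $f < R^2$. Collecting, $|\nabla^\alpha\log\Psi| \lesssim Rf^{-1}$, and multiplying by $2a\zeta_{a,b;\varepsilon}$ gives \eqref{prf.der_zeta}.

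The main obstacle — really just a bookkeeping care-point rather than a deep difficulty — is making sure the dominant contribution $f^{-1}\nabla f \sim f^{-1}\cdot r$ is correctly identified as the largest term and that every other term is genuinely absorbed into it: one must check that $\varepsilon \lesssim Rf^{-1}$ and $b \lesssim Rf^{-1}$, which both follow from $f < r^2 \leqslant R^2$ hence $f^{-1} \geqslant R^{-2}$, so $Rf^{-1} \geqslant R^{-1} \gg b \gg \varepsilon$. One should also note that although the coordinate vector fields $\partial_u,\partial_v$ degenerate near $\{\tau=0\}$, the quantity $\nabla^\sharp f = \tfrac12(u\partial_u+v\partial_v)$ extends smoothly (cf. the remark following Definition on the warped metric), so there is no genuine singularity in $\nabla^\alpha f$ on $\mf{U}\cap\mf{D}$ away from $\{f=0\}$, and the estimate is uniform there. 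Since only a power of $f^{-1}$ (not of $f^{-1/2}$ alone) appears on the right, the estimate is not sharp in the interior of $\mf{D}$, which is exactly what is needed for the absorptions in the proof of Theorem \ref{thm.carl_int}.
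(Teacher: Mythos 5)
Your proposal is correct and follows essentially the same logarithmic-differentiation route as the paper: both compute $\nabla^\alpha\zeta_{a,b;\varepsilon}=2a\,\zeta_{a,b;\varepsilon}\,\nabla^\alpha\log\Psi$ and then bound each contribution using $|\nabla^\alpha r|\leqslant 1$, $|\nabla^\alpha f|\lesssim R$, and $\varepsilon\ll b\ll R^{-1}\leqslant Rf^{-1}$ (the last from $f< R^2$). The only organizational difference is that the paper first rewrites $(1+\varepsilon u)(1-\varepsilon v)=1-\varepsilon r+\varepsilon^2 f$ (and similarly for the exponential's denominator) so that only smooth functions of $(t,x)$ are ever differentiated, precisely to avoid the issue you flag near $\{\tau=0\}$ — though note that what is actually non-smooth there is $\nabla^\alpha u,\nabla^\alpha v$ (since $u,v$ involve $|t|$), not $\nabla^\alpha f$, which is a polynomial gradient and poses no problem; your approach of differentiating $\log(1\pm\varepsilon u)$ etc.\ directly is still fine because the a.e.\ bounds transfer to the continuous quantity $\nabla^\alpha\zeta_{a,b;\varepsilon}$.
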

\begin{proof}
Using the definitions of \(u,v,f\), we get \( (1 + \varepsilon u) (1-\varepsilon v) = (1- \varepsilon r + \varepsilon^2 f ) \). Then, \eqref{eq.carleman_weight} shows\footnote{Even though \( u,v \) are not smooth at \( t=0 \), \( f \) and \( f (1- \varepsilon r + \varepsilon^2 f )^{-1}\) are smooth.}
\begin{align} 
\label{eq.zeta_der_0}\nabla^\alpha\zeta_{a,b;\varepsilon} & = \nabla^\alpha\Bigg\{\frac{f}{(1- \varepsilon r + \varepsilon^2 f )}\cdot\text{exp} \Bigg[\frac{2bf^{\frac{1}{2}}}{(1- \varepsilon r + \varepsilon^2 f )^\frac{1}{2}}\Bigg] \Bigg\}^{2a}\\
\notag  & = 2a\ \zeta_{a,b;\varepsilon} \Bigg\{ \frac{(1- \varepsilon r + \varepsilon^2 f )}{f} \nabla^\alpha\bigg( \frac{f}{(1- \varepsilon r + \varepsilon^2 f )} \bigg) + \nabla^\alpha \Bigg[\frac{2bf^{\frac{1}{2}}}{(1- \varepsilon r + \varepsilon^2 f )^\frac{1}{2}}\Bigg]\Bigg\}. 
\end{align}
Let us evaluate the terms inside the brackets. For the first term,
\begin{align*}
\bigg|\nabla^\alpha\bigg( \frac{f}{(1- \varepsilon r + \varepsilon^2 f )} \bigg)\bigg| = \bigg|\frac{\nabla^\alpha f}{(1- \varepsilon r + \varepsilon^2 f )} - \frac{f \nabla^\alpha (1- \varepsilon r + \varepsilon^2 f )}{(1- \varepsilon r + \varepsilon^2 f )^2}\bigg| 
\end{align*}
Note that \( | \nabla^\alpha r | \leqslant 1 \), and due to \eqref{eq.f}, \eqref{eq.D}, and \eqref{eq.uvf_bound1} we get \( |\nabla^\alpha f| \lesssim R \). The second term is estimated analogously. Then, the fact that \( \varepsilon \ll_{m,n} b \ll R^{-1}\) shows that \eqref{eq.zeta_der_0} gives us
\begin{align*}
|\nabla^\alpha \zeta_{a,b;\varepsilon}| & \lesssim 2 a \zeta_{a,b;\varepsilon} \left( \frac{(1- \varepsilon r + \varepsilon^2 f ) R}{f}  + \frac{ b R }{ f^\frac{1}{2} } \right) \lesssim a R \zeta_{a,b;\varepsilon} ( f^{-1} + b f^{-\frac{1}{2}}) \lesssim a R  \zeta_{a,b;\varepsilon} f^{-1}. \qedhere
\end{align*}

\end{proof}

\begin{proof}[Proof of Theorem \ref{thm.carl_int}]
Let us assume that the hypothesis of Theorem \ref{thm.carl_int} holds. That is, we choose \( \mf{U}, R, a, b, \varepsilon, \omega, z \) as in the statement. Note that, this choice of the domain and constants also satisfies the hypothesis of Theorem \ref{thm.carl_bdry}, where we note that \eqref{eq.carl_int_domain} implies \eqref{eq.carleman_domain}. Hence, we apply Theorem \ref{thm.carl_bdry} to the above chosen \( \mf{U}, R, a, b, \varepsilon, z \). Recall that \(\mc{N}\) denotes the outer pointing unit normal of \(\mf{U}\). Then, under the present hypothesis, the coefficient present in the boundary term on the RHS of \eqref{eq.carleman_est} reduces to\footnote{In the general case, \( \mc{N} = (\nu^t, \nu^x)= ( \nu^{t_1}, \ldots, \nu^{t_m}, \nu^{x_1}, \ldots, \nu^{x_n}) \). As we are dealing with the static boundary case \(\nu^t=0\), and we get \( \mc{N} = (0, \nu^x) \).}
\[ ( 1 - \varepsilon r ) \mc{N} f + \varepsilon f \mc{N} r \leqslant r \nu r. \]
Using the above observation, we bound the boundary term present in equation \eqref{eq.carleman_est} as follows
\begin{align} 
\notag \int_{\partial\mf{U}\cap \mf{D}}\zeta_{a,b;\varepsilon} r \nu r |\mc{N} z|^2 & \leqslant\int_{( \R^m \times \Gamma_+ )\cap \mf{D}} \zeta_{a,b;\varepsilon} r \nu r |\mc{N} z|^2  \lesssim R \int_{( \R^m \times \Gamma_+ )\cap \mf{D}}\zeta_{a,b;\varepsilon} |\mc{N} z|^2. \label{Carl}
\end{align} 
Our goal is to find a bound for
\[
 R \int_{( \R^m \times \Gamma_+ )\cap \mf{D}}\zeta_{a,b;\varepsilon} |\mc{N} z|^2,
\]
in terms of integral over \( \R^m \times \omega \).
Let \( h\in C^1(\Bar{\Omega};\R^n) \) be a vector field such that \(h = \nu \) on \(\partial\mf{U}\cap \mf{D}\). Define cut-off function \(\rho\in C^2(\Bar{\Omega};[0,1]), \) by
\begin{equation}
\rho(x)
= \begin{cases}
1,\hspace{1cm} x\in\mathcal{O}_{\sigma/3}(\Gamma_+)\cap \Omega,\\
0, \hspace{1cm} x\in\Omega \setminus \mathcal{O}_{\sigma/2}(\Gamma_+).
\end{cases} \label{eq:3.1}
\end{equation} 
Then, using integration by parts shows
\begin{align*}
\int_{\mf{U}\cap \mf{D}}\square  z \rho \zeta_{a,b;\varepsilon} h  z & = \int_{\mf{U}\cap \mf{D}}\nabla^\alpha\nabla_\alpha z (\rho \zeta_{a,b;\varepsilon} h  z) = - \int_{\mf{U}\cap \mf{D}}\nabla_\alpha z\nabla^\alpha (\rho \zeta_{a,b;\varepsilon} h z) + \int_{\partial\mf{U}\cap \mf{D}}\mathcal{N} z \rho \zeta_{a,b;\varepsilon} h z,
\end{align*}
where the integral over \( \mf{U} \cap \partial\mf{D} \) on the RHS vanishes because \(\zeta_{a,b;\varepsilon}\) vanishes on boundary of the null cone. This implies
\begin{equation} \label{eq:c1}
\int_{\partial\mf{U}\cap \mf{D}}\rho\zeta_{a,b;\varepsilon} |\mathcal{N} z|^2 = \int_{\mf{U}\cap \mf{D}}\square z \rho\zeta_{a,b;\varepsilon} h z + \int_{\mf{U}\cap \mf{D}} \nabla_\alpha z\nabla^\alpha ( \rho \zeta_{a,b;\varepsilon} h z). 
\end{equation} 
For the second term in the RHS, we see that
\begin{align*}
 \label{eq:g1} \numberthis & \int_{\mf{U}\cap \mf{D}}  \nabla_\alpha z\nabla^\alpha ( \rho \zeta_{a,b;\varepsilon} h z) \\
& =  \int_{\mf{U} \cap \mf{D}} \rho \zeta_{a,b;\varepsilon} \nabla_\alpha z \nabla^\alpha(h^\beta \nabla_\beta z) + \int_{\mf{U} \cap \mf{D}} \nabla_\alpha z \nabla^\alpha(\rho \zeta_{a,b;\varepsilon}) \cdot hz \\
& = \int_{\mf{U} \cap \mf{D}} \rho \zeta_{a,b;\varepsilon} \nabla_\alpha z \nabla^\alpha h^\beta \nabla_\beta z + \int_{\mf{U} \cap \mf{D}} \rho \zeta_{a,b;\varepsilon} \nabla_\alpha z \cdot h^\beta {\nabla^\alpha}_\beta z + \int_{\mf{U} \cap \mf{D}} \nabla_\alpha z \nabla^\alpha(\rho \zeta_{a,b;\varepsilon}) \cdot hz \\
& = \int_{\mf{U} \cap \mf{D}} \rho \zeta_{a,b;\varepsilon} \nabla^\alpha h^\beta \nabla_\alpha z \nabla_\beta z + \frac{1}{2} \int_{\mf{U} \cap \mf{D}} \rho \zeta_{a,b;\varepsilon} h^\beta \nabla_\beta (\nabla_\alpha z \nabla^\alpha z) + \int_{\mf{U} \cap \mf{D}} \nabla_\alpha z \nabla^\alpha(\rho \zeta_{a,b;\varepsilon}) \cdot hz \\
& =  \int_{\mf{U} \cap \mf{D}} \rho \zeta_{a,b;\varepsilon} \nabla^\alpha h^\beta \nabla_\alpha z \nabla_\beta z - \frac{1}{2} \int_{\mf{U} \cap \mf{D}} \nabla_\beta (\rho \zeta_{a,b;\varepsilon} h^\beta ) \nabla_\alpha z \nabla^\alpha z \\
& \qquad + \frac{1}{2} \int_{\partial \mf{U} \cap \mf{D}} \rho \zeta_{a,b;\varepsilon} |\mathcal{N} z|^2 + \int_{\mf{U} \cap \mf{D}} \nabla_\alpha z \nabla^\alpha(\rho \zeta_{a,b;\varepsilon}) \cdot hz,
\end{align*}
where we used integration by parts, the Dirichlet boundary condition on \(z\), and the definition of \(h\) in the last step. Using \eqref{eq:g1} in \eqref{eq:c1}, and also writing the factor in front of the integral, we get
\begin{align*}
\numberthis \label{eq:g11}\frac{R}{2} \int_{\partial\mf{U}\cap \mf{D}}\rho\zeta_{a,b;\varepsilon} |\mathcal{N} z|^2 & = R \int_{\mf{U}\cap \mf{D}}\square z \rho\zeta_{a,b;\varepsilon} h z + R \int_{\mf{U} \cap \mf{D}} \rho \zeta_{a,b;\varepsilon} \nabla^\alpha h^\beta \nabla_\alpha z \nabla_\beta z \\
& \qquad - \frac{R}{2} \int_{\mf{U} \cap \mf{D}} \nabla_\beta (\rho \zeta_{a,b;\varepsilon} h^\beta ) \nabla_\alpha z \nabla^\alpha z + R \int_{\mf{U} \cap \mf{D}} \nabla_\alpha z \nabla^\alpha(\rho \zeta_{a,b;\varepsilon}) \cdot hz.
\end{align*}
Next, we see that
\begin{equation}
R \int_{\mf{U}\cap \mf{D}}\square z \rho\zeta_{a,b;\varepsilon} h z \lesssim \frac{1}{a} \int_{\mf{U}\cap \mf{D}} \rho \zeta_{a,b;\varepsilon} f |\square z|^2 + a R^2 \int_{\mf{U}\cap \mf{D}} \rho \zeta_{a,b;\varepsilon} f^{-1}| h z|^2. \label{eq:e}
\end{equation}
Now using Lemma \ref{thm.der_carl_est} shows that
\begin{align*}
\nabla_\beta ( \rho\zeta_{a,b;\varepsilon} h^\beta ) & = \nabla_\beta \rho \cdot \zeta_{a,b;\varepsilon} h^\beta + \rho \nabla_\beta \zeta_{a,b;\varepsilon} h^\beta + \rho \zeta_{a,b;\varepsilon} \nabla_\beta h^\beta \lesssim a R (\rho + |h^\beta\nabla_\beta  \rho|) \zeta_{a,b;\varepsilon} f^{-1},
\end{align*} 
and also that \( \nabla^\alpha(\rho \zeta_{a,b;\varepsilon}) \lesssim a R (\rho + |\nabla^\alpha \rho|) \zeta_{a,b;\varepsilon}  f^{-1} \). Using these estimates and \eqref{eq:e} in \eqref{eq:g11} shows
\begin{align*}
R & \int_{\partial\mf{U}\cap \mf{D}}\rho\zeta_{a,b;\varepsilon} |\mathcal{N} z|^2 \\
&\qquad \lesssim  \frac{1}{a} \int_{\mf{U}\cap \mf{D}} \rho \zeta_{a,b;\varepsilon} f |\square z|^2 + a R^2 \int_{\mf{U}\cap \mf{D}} \rho \zeta_{a,b;\varepsilon} f^{-1}| h z|^2 + R \int_{\mf{U} \cap \mf{D}} \rho \zeta_{a,b;\varepsilon} \nabla^\alpha h^\beta \nabla_\alpha z \nabla_\beta z \\
& \qquad \qquad  + a R^2 \int_{\mf{U} \cap \mf{D}} (\rho + |h^\beta\nabla_\beta \rho|) \zeta_{a,b;\varepsilon} f^{-1} \nabla_\alpha z \nabla^\alpha z + a R^2  \int_{\mf{U} \cap \mf{D}} (\rho + |\nabla^\alpha \rho|) \zeta_{a,b;\varepsilon} f^{-1} | \nabla_\alpha z \cdot hz| .
\end{align*}
This implies
\begin{align*}
R \int_{\partial\mf{U}\cap \mf{D}}\rho\zeta_{a,b;\varepsilon} |\mathcal{N} z|^2 & \lesssim  \frac{1}{a} \int_{\mf{U}\cap \mf{D}} \rho \zeta_{a,b;\varepsilon} f |\square z|^2 + a R^2 \int_{\mf{U}\cap \mf{D}} (\rho + |\nabla_x \rho|) \zeta_{a,b;\varepsilon} f^{-1}| \nabla_{t,x} z|^2 , 
\end{align*}
which, after writing out the space and time derivatives separately, reduces to
\begin{align*}
\numberthis \label{eq:g2} R \int_{( \R^m \times \Gamma_+ )\cap \mf{D}} \zeta_{a,b;\varepsilon} |\mathcal{N} z|^2  & \lesssim \frac{1}{a} \int_{\mf{U}\cap \mf{D}}  \zeta_{a,b;\varepsilon} f |\square z|^2 \\
& \qquad + a R^2 \int_{( \R^m \times \mathcal{O}_{\sigma/2}(\Gamma_+) )\cap \mf{D}} \zeta_{a,b;\varepsilon} f^{-1} ( |\nabla_x z|^2 + |\nabla_t z|^2 ), 
\end{align*}
where the integral region in the LHS is obtained by observing that \( \R^m \times \Gamma_+ \subset \partial\mf{U} \), and then using \eqref{eq:3.1}. Now we will estimate the term 
\[ a R^2 \int_{( \R^m \times \mathcal{O}_{\sigma/2}(\Gamma_+) )\cap \mf{D}}\zeta_{a,b;\varepsilon} f^{-1}|\nabla_x z|^2,\]
by data restricted to the interior region \(\omega\). For this purpose, define the function \[\eta(t,x):=\rho_1^2 \zeta_{a,b;\varepsilon} f^{-1},\] where  \( \rho_1\in C^2(\Bar{\Omega};[0,1]) \) is the cut-off function given by
\begin{equation}
\rho_1(x) = \begin{cases}
1, \qquad x\in\mathcal{O}_{\sigma/2}(\Gamma_+)\cap \Omega,\\
0, \qquad x\in \Omega\setminus \omega.
\end{cases} \label{eq:rho_1}
\end{equation}
Then using integration by parts, we have
\[ \int_{\mf{U}\cap \mf{D}} \eta z \square z = - \int_{\mf{U}\cap \mf{D}} z \nabla^\alpha \eta \nabla_\alpha z - \int_{\mf{U}\cap \mf{D}} \eta \nabla^\alpha z \nabla_\alpha z, \]
where the boundary terms vanish because \( z=0 \) on \( \partial \mf{U} \cap \mf{D} \), and \( \zeta_{a,b;\varepsilon} = 0 \) on \( \mf{U} \cap \partial \mf{D} \). Then, we have
\begin{align*}
\int_{\mf{U}\cap \mf{D}}\eta |\nabla_x z|^2 & = - \int_{\mf{U}\cap \mf{D}}\eta z\square z + \int_{\mf{U}\cap \mf{D}} z \nabla_t z \cdot \nabla_t \eta + \int_{\mf{U}\cap \mf{D}} \eta |\nabla_t z|^2  - \int_{\mf{U}\cap \mf{D}} z \nabla_x z \cdot \nabla_x \eta. 
\end{align*} 
From the above it follows that
\begin{align*}
\int_{\mf{U}\cap \mf{D}}\rho_1^2 \zeta_{a,b;\varepsilon} f^{-1} |\nabla_x z|^2 & \leq  \int_{\mf{U}\cap \mf{D}}|\eta z\square z| + \int_{\mf{U}\cap \mf{D}}|z \nabla_t z \cdot \nabla_t \eta| + \int_{\mf{U}\cap \mf{D}} \eta |\nabla_tz|^2 - \int_{\mf{U}\cap \mf{D}} z \nabla_x z \cdot \nabla_x \eta.
\end{align*}
We can calculate that 
\[ |\nabla_x \eta| = |\nabla_x (\rho_1^2 \zeta_{a,b;\varepsilon} f^{-1})| \lesssim \rho_1 \zeta_{a,b;\varepsilon} \left[ f^{-1} |\nabla_x \rho_1| + a R \rho_1 f^{-2} \right], \]
which implies that the latest integral estimate reduces to 
\begin{align*}
\int_{\mf{U}\cap \mf{D}} \rho_1^2 \zeta_{a,b;\varepsilon} f^{-1} |\nabla_x z|^2 & \lesssim \int_{\mf{U}\cap \mf{D}}\rho_1^2\zeta_{a,b;\varepsilon} f^{-1} | z\square z| + a R \int_{\mf{U}\cap \mf{D}} \rho_1^2 \zeta_{a,b;\varepsilon} f^{-2} | z \nabla_t z|\\
& \qquad + \int_{\mf{U}\cap \mf{D}}\rho_1^2\zeta_{a,b;\varepsilon} f^{-1} |\nabla_t z|^2 +\int_{\mf{U}\cap \mf{D}} \rho_1 \zeta_{a,b;\varepsilon} \left[ f^{-1} |\nabla_x \rho_1| + a R \rho_1 f^{-2} \right] | z \nabla_x z |,
\end{align*}
where we used \eqref{prf.der_zeta} for estimating \(\nabla_t \eta\).
Also considering the constant present in \eqref{eq:g2}
\begin{align*}
\numberthis \label{eq:g100} a R^2 & \int_{\mf{U}\cap \mf{D}}  \rho_1^2 \zeta_{a,b;\varepsilon} f^{-1} |\nabla_x z|^2 \\ 
& \lesssim  a R^2 \int_{\mf{U}\cap \mf{D}}\rho_1^2\zeta_{a,b;\varepsilon} f^{-1} | z\square z| + a^2 R^3  \int_{\mf{U}\cap \mf{D}}\rho_1^2 \zeta_{a,b;\varepsilon} f^{-2} |z \nabla_t z | \\
& \qquad + a R^2 \int_{\mf{U}\cap \mf{D}}\rho_1^2\zeta_{a,b;\varepsilon} f^{-1}|\nabla_t z|^2 + a R^2 \int_{\mf{U}\cap \mf{D}} \rho_1 \zeta_{a,b;\varepsilon} \left[ f^{-1} |\nabla_x \rho_1| + a R \rho_1 f^{-2} \right] | z \nabla_x z |.
\end{align*}
Applying the Cauchy-Schwarz inequality for the last term in the RHS of the above inequality shows 
\begin{align*}
a R^2 \int_{\mf{U}\cap \mf{D}} & \rho_1 \zeta_{a,b;\varepsilon} \left[ f^{-1} |\nabla_x \rho_1| + a R \rho_1 f^{-2} \right] | z \nabla_x z | \\
& \lesssim a R^2 \int_{\mf{U}\cap \mf{D}}  \rho_1 \zeta_{a,b;\varepsilon} f^{-1} |\nabla_x\rho_1| | z \nabla_x z | + a^2 R^3 \int_{\mf{U}\cap \mf{D}} \rho_1^2 \zeta_{a,b;\varepsilon} f^{-2} | z \nabla_x z |\\
& \lesssim a^2 R^2 \int_{\mf{U}\cap \mf{D}} |\nabla_x\rho_1|^2 \zeta_{a,b;\varepsilon} f^{-1} z^2 + R^2 \int_{\mf{U}\cap \mf{D}} \rho_1^2 \zeta_{a,b;\varepsilon} f^{-1} |\nabla_x z|^2 + a^4 R^4 \int_{\mf{U}\cap \mf{D}} \rho_1^2 \zeta_{a,b;\varepsilon} f^{-3} z^2.
\end{align*}
Substituting this into \eqref{eq:g100}, and using Cauchy-Schwarz for the other terms in the RHS of \eqref{eq:g100}, we get
\begin{align*}
\numberthis \label{eq:g123} a R^2 & \int_{ \mf{U} \cap \mf{D} } \rho_1^2 \zeta_{a,b;\varepsilon} f^{-1} |\nabla_x z|^2 \\ 
& \lesssim \frac{1}{a} \int_{\mf{U}\cap \mf{D}} \rho_1^2 \zeta_{a,b;\varepsilon} f |\square z|^2 + a^3 R^4 \int_{\mf{U}\cap \mf{D}} \rho_1^2 \zeta_{a,b;\varepsilon} f^{-3} z^2 + a^2 R^3 \int_{\mf{U}\cap \mf{D}} \rho_1^2 \zeta_{a,b;\varepsilon} f^{-2} |\nabla_t z|^2  \\
& \qquad + a^2 R^3 \int_{\mf{U}\cap \mf{D}} \rho_1^2 \zeta_{a,b;\varepsilon} f^{-2} z^2 + a R^2 \int_{\mf{U}\cap \mf{D}} \rho_1^2 \zeta_{a,b;\varepsilon} f^{-1} |\nabla_t z|^2  \\
& \qquad + a^2 R^2 \int_{\mf{U}\cap \mf{D}} |\nabla_x\rho_1|^2 \zeta_{a,b;\varepsilon} f^{-1} z^2 + a^4 R^4 \int_{\mf{U}\cap \mf{D}} \rho_1^2 \zeta_{a,b;\varepsilon} f^{-3} z^2 + R^2 \int_{\mf{U}\cap \mf{D}} \rho_1^2 \zeta_{a,b;\varepsilon} f^{-1} |\nabla_x z|^2.
\end{align*}
The last term on the RHS of the above estimate is absorbed into the LHS. 
Next, if we consider the coefficients of \( z^2 \) without \(\zeta_{a,b;\varepsilon}\), we see that
\begin{align*}
a^3 R^4 \rho_1^2 f^{-3} + & a^2 R^3 \rho_1^2 f^{-2} + a^2 R^2 |\nabla_x \rho_1|^2 f^{-1} + a^4 R^4 \rho_1^2 f^{-3} \\
& \lesssim a^4 R^4 \rho_1^2 f^{-3} + a^2 R^3 \rho_1^2 f^{-3} f + a^2 R^2 |\nabla_x \rho_1|^2  f^{-3} f^2\\
& \lesssim a^4 R^4 \left[ |\nabla_x \rho_1|^2 + \rho_1^2 \right]\text{,}
\end{align*}
where we also used \eqref{eq.uvf_bound1} and \eqref{eq.carl_int_choices}. We use similar ideas for the coefficients of \(\nabla_t z\). Then, \eqref{eq:g123} implies that
\begin{align*}
\numberthis \label{eq:g125} a R^2 \int_{ \mf{U} \cap \mf{D} } \rho_1^2 \zeta_{a,b;\varepsilon} f^{-1} |\nabla_x z|^2 & \lesssim  \frac{1}{a} \int_{\mf{U}\cap \mf{D}} \rho_1^2 \zeta_{a,b;\varepsilon} f |\square z|^2 +  a^2 R^3 \int_{\mf{U}\cap \mf{D}} \rho_1^2 \zeta_{a,b;\varepsilon} f^{-2} |\nabla_t z|^2 \\
& \qquad  +  a^4 R^4 \int_{\mf{U}\cap \mf{D}} \left[ |\nabla_x \rho_1|^2 + \rho_1^2 \right] \zeta_{a,b;\varepsilon} f^{-3} z^2.
\end{align*}
Next, we  use \eqref{eq:rho_1}  to see that \eqref{eq:g125} simplifies to
\begin{align*}
a R^2 \int_{( \R^m \times \mathcal{O}_{\sigma/2}(\Gamma_+) )\cap \mf{D} } \rho_1^2 \zeta_{a,b;\varepsilon} f^{-1} |\nabla_x z|^2 & \lesssim \frac{1}{a} \int_{\mf{U}\cap \mf{D}} \zeta_{a,b;\varepsilon} f |\square z|^2\\
& \qquad  + a^2 R^3 \int_{( \R^m \times \omega ) \cap \mf{D}} \zeta_{a,b;\varepsilon} f^{-2} |\nabla_t z|^2 \\
& \qquad + a^4 R^4  \int_{( \R^m \times \omega ) \cap \mf{D}} \zeta_{a,b;\varepsilon} f^{-3} z^2.
\end{align*}
We put together the above estimate and \eqref{eq:g2} to get
\begin{align*}
R \int_{ \R^m \times \Gamma_+ } \zeta_{a,b;\varepsilon} |\mathcal{N} z|^2 & \lesssim \frac{1}{a} \int_{\mf{U}\cap \mf{D}} \zeta_{a,b;\varepsilon} f |\square z|^2 + a^2 R^3 \int_{( \R^m \times \omega ) \cap \mf{D}} \zeta_{a,b;\varepsilon} f^{-2} |\nabla_t z|^2 \\
& \qquad + a^4 R^4  \int_{( \R^m \times \omega ) \cap \mf{D}} \zeta_{a,b;\varepsilon} f^{-3} z^2  .
\end{align*}
Finally, using this estimate to bound the boundary term in \eqref{eq.carleman_est} concludes the proof of \eqref{eq.carl_int}.
\end{proof}

\section{Observability Estimate Results}\label{sec Carl_obs}
In this section, we will prove the main observability result Theorem \ref{intro_thm_obs}. Hence, we go back to the setting of the PDE problem from Section \ref{sec Setting}, and consider the original observability problem in the space-time domain \(\R^{1+n}\). We will prove two observability estimates: exterior and interior. 

\begin{itemize}
\item \textit{Exterior observability:} This estimate is obtained by applying the Carleman estimate with the centre point lying outside the domain. This will address the case of Theorem \ref{intro_thm_obs} when \(x_0 \notin \bar{\Omega}\).

\item \textit{Interior observability:} This estimate is obtained by applying the Carleman estimate with the centre point lying inside the domain. We will see that we need to apply the Carleman estimate around two centre points. This observability estimate will help us to prove the case of Theorem \ref{intro_thm_obs} when \(x_0 \in \Omega\).
\end{itemize}
We will prove the above two observability estimates in Section \ref{ssec_extobs} and Section \ref{ssec_intobs}, respectively. Then, in Section \ref{ssec_proofofobsthm} we will present the proof of Theorem \ref{intro_thm_obs}.
Henceforth we will assume, without any loss of generality, that \(x_0 = 0 \).

\subsection{Exterior Observability}\label{ssec_extobs}
Here we will consider the case when the observation point lies outside the domain.

\begin{definition}
Define the region \(\mc{D}\) as
\begin{equation}
\mc{D} := \{ |x|^2 > t^2 \} \subset \R^{1+n}.
\end{equation}
\end{definition}
The key improvement (to the standard Carleman based observability results) in this article is related to the region \( \mc{D}\). Specifically, we will show that the observation region is restricted to the region \(\mc{D}\).

\begin{theorem}\label{thm.obs_ext}
Let \( \Omega \subset \R^n \) and consider the setting of system \eqref{eq.wave_obs}. Let \(\mc{U} \cap \mc{D}\) be bounded.  Assume that the observation point \( 0 \notin \bar{\Omega} \).  Further,
\begin{itemize}

\item Define the following constants:
\begin{align}
\label{eq.obs_ext_MR} \notag R_+ := \sup_{\Omega} r \text{,} &\qquad R_- := \inf_{ \Omega } r, \\
M_0 := \sup_{ \mc{U}  } | V | \text{,} &\qquad M_1 := \sup_{ \mc{U} } \left\{ \frac{| \mc{X}^{ t, x } |}{\sqrt{R_+}}, | \nabla_{t,x}  \mc{X}^{ t, x } | \right\} \text{,} \\
\notag  M := & \max \{ 1, M_0, M_1 \},
\end{align}
where \( \mc{X}^{t,x} \) represents the Cartesian components of \(\mc{X}\).

\item Let \(\nu\) be the outward pointing unit normal to \(\Omega\), and let
\begin{equation}
\label{eq.obs_ext_delta} \Gamma_+ :=  \{ \nu r > 0 \} = \{ x \in \partial \Omega | x\cdot \nu >0 \} \text{.}
\end{equation}

\item Let \(\sigma> 0.\) Define \(\omega \subset \Omega\) as
\begin{equation}\label{eq.obs_ext_omega}
\omega := \mc{O}_\sigma(\Gamma_+) \cap \Omega.
\end{equation}
\end{itemize}
Let \( T > R_+ \). Then, there exist constants \( C_1\) and \(C_2\) such that 
\begin{equation}\label{eq.obs_ext_est}
|| \phi_0 ||_{L^2(\Omega)}^2 + || \phi_1 ||_{H^{-1}(\Omega)}^2 \leqslant \frac{ C_1 a^4 2^{12a} M }{ \delta^2 R_+^2 } \left( \frac{ R_+}{R_-} \right)^{4a+5}  e^{ 2 C_2 M T } \int_{ ( (-T,T) \times \omega ) \cap \mc{D}} |\phi(t,x)|^2,
\end{equation}
holds for any solution \(\phi \in C^2 ( \mc{U} ) \cap C^1 ( \bar{\mc{U}}) \) of \eqref{eq.wave_obs}.
\end{theorem}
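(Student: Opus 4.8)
The plan is to derive the exterior observability estimate \eqref{eq.obs_ext_est} from the interior Carleman estimate Theorem \ref{thm.carl_int} applied in the $(m,n) = (2,n)$ setting to the auxiliary function $z(t_1,t_2,x) := \int_{t_1}^{t_2} \phi(s,x)\,ds$ introduced in \eqref{eq_intro_z}. First I would verify that $z$ is well-defined and $C^2$ in the region $\mf{U} \cap \mf{D}$ with $\mf{U} = \R^2 \times \Omega$ and that it inherits the Dirichlet condition $z|_{\partial\mf{U}\cap\mf{D}} = 0$ from $\phi|_{(-T,T)\times\partial\Omega} = 0$; moreover, by differentiating under the integral sign and using the wave equation \eqref{eq.wave_obs} for $\phi$, one checks that $z$ satisfies an ultrahyperbolic equation $\square z = (\partial_{t_1}^2 - \partial_{t_2}^2)$-type source that is controlled by $|\nabla_{t,x}\phi|$ and $|\phi|$ through the lower-order terms $\mc{X}, V$ — i.e. $|\square z| \lesssim M(|\nabla_{t,x} z| + |z| + |\phi(t_1,x)| + |\phi(t_2,x)| + \ldots)$ after integrating the spatial Laplacian and the transport terms by parts in $s$. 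The key point is that $z$ has better regularity than $\phi$, which is exactly what lets us feed it into an $H^1_0$-type Carleman estimate.

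Next I would insert this $z$ into \eqref{eq.carl_int}, choosing the radial parameters so that $R_- \le r \le R_+$ on $\Omega$ and taking $R \sim R_+$, with $a$ large (the factor $(R_+/R_-)^{4a+5}$ and $2^{12a}$ in the conclusion signal that $a$ is chosen as a fixed large constant depending on $M$ and $T$, tuned at the end to absorb the exponential growth of the energy). On the right-hand side of \eqref{eq.carl_int} the bulk term $\frac{1}{a}\int \zeta f|\square z|^2$ must be absorbed: since $|\square z|^2 \lesssim M^2(\ldots)$ involves $z$, $\nabla z$ and boundary-in-time traces of $\phi$, the terms involving $z$ and $\nabla z$ get absorbed into the positive left-hand side once $a \gg M$ (using $f < r^2 < R_+^2$ and the weight comparisons), while the traces $\phi(\pm T, \cdot)$ and the first-order terms that cannot be absorbed are what remain. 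The observation integrals $\int_{(\R^2\times\omega)\cap\mf{D}} \zeta f^{-2}|\nabla_t z|^2$ and $\int_{(\R^2\times\omega)\cap\mf{D}} \zeta f^{-3} z^2$ are then bounded pointwise by $\int\int |\phi(s,x)|^2\,ds$ over $s$ between the two time variables and $x \in \omega$, using Cauchy–Schwarz on the definition of $z$ and the boundedness of the weight $\zeta f^{-k}$ on the bounded set $\mc{U}\cap\mc{D}$; this is where the constant $C_1 a^4 2^{12a}(R_+/R_-)^{4a+5}/(\delta^2 R_+^2)$ is generated.

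The final step is to pass from the weighted bulk bound on $z$ back to the energy of $\phi$ at $t = -T$. After absorption, the left-hand side of \eqref{eq.carl_int} bounds $\int_{\mf{U}\cap\mf{D}} \zeta f^{-1/2} z^2$ from below on a fixed-size subregion where $\zeta$ is bounded below (a neighbourhood of a spacelike slice inside $\mf{D}$), which in turn controls $\int_{t_1}^{t_2}\phi(s,x)\,ds$ in $L^2$ on a suitable set of $(t_1,t_2,x)$; a standard argument (choosing $t_1, t_2$ appropriately and using that $T > R_+$ so the cone exterior $\mc{D}$ actually reaches a full time slice over $\Omega$) recovers $\|\phi(t_\ast,\cdot)\|_{L^2}$ for some interior time $t_\ast$, and then the energy estimate for the wave equation \eqref{eq.wave_obs} — which costs the factor $e^{2C_2 M T}$ — transports this to $\|\phi_0\|_{L^2(\Omega)}^2 + \|\phi_1\|_{H^{-1}(\Omega)}^2$ at $t = -T$. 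I expect the main obstacle to be exactly the bookkeeping in this last transfer: one has a Carleman estimate for $z$ but the energy estimate only applies to $\phi$, so the order of operations — first absorb, then integrate out the extra time variable, then apply energy estimates — must be carried out with care, particularly in controlling the boundary-in-time traces $\phi(\pm T, x)$ that appear in $\square z$ and ensuring they do not circularly involve the quantity being estimated; handling the weight degeneration near $\{f = 0\}$ (the null cone boundary) when extracting a lower bound on a fixed-size region is the other delicate point.
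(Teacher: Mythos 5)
Your high-level strategy matches the paper's exactly: define $z(t_1,t_2,x)=\int_{t_1}^{t_2}\phi(s,x)\,ds$, apply the $m=2$ interior Carleman estimate of Theorem~\ref{thm.carl_int} to $z$, absorb the source and non-essential terms, return to $\phi$, and finish with the energy estimates of Lemmas~\ref{thm.energy1}--\ref{thm.energy2}. However, there are two genuine gaps, and one factual error, in the way you propose to carry this out.

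First, your description of $\square z$ is off. You claim boundary-in-time traces $\phi(\pm T,\cdot)$ appear and ``remain'' after absorption. In fact, writing $z_{t_1t_1}+z_{t_2t_2}-\Delta z$ and substituting the wave equation for $\phi$, the traces $\mp\phi'(t_i,x)$ produced by the two second time derivatives cancel \emph{exactly} against the time-integrated $\partial^2_{ss}\phi$ from $\Delta\phi$, leaving $\square z = -\int_{t_1}^{t_2}(\nabla_{\mc{X}(s,x)}\phi(s,x)+V(s,x)\phi(s,x))\,ds$ (equation \eqref{eq.z_wave}). There are no $\phi(\pm T,\cdot)$ traces to worry about; the only boundary-in-$s$ terms come from integrating the transport part by parts, and they are controlled by $|\nabla_{t_1,t_2,x}z|$, not by data at $t=\pm T$.

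Second, and this is the substantive gap, you cannot simply bound $\int\zeta f\bigl|\int_{t_1}^{t_2}\dots\,ds\bigr|^2$ by Cauchy--Schwarz and ``boundedness of the weight.'' The weight $\zeta_{a,b;\varepsilon}f$ is evaluated at $(t_1,t_2,x)$ while the integrand lives at $(s,t_2,x)$ or $(t_1,s,x)$, and you must move the weight \emph{inside} the $s$-integral before you can swap the orders of integration and cancel the extra time variable. The paper does this via a dedicated monotonicity lemma (Lemma~\ref{thm.f/xi_inc}): $\zeta_{a,b;\varepsilon}f$ is a decreasing function of $|t|^2$, so $\zeta f(t_1,t_2,x)\leq\zeta f(s,t_2,x)$ when $|s|\leq|t_1|$. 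Without this, the estimates \eqref{eq.z_Vz}--\eqref{eq.z_X} collapse, and the bulk term $\tfrac1a\int\zeta f|\square z|^2$ cannot be reduced to $\tfrac{M^2R_+}{a}\int\zeta f(z_{t_1}^2+z_{t_2}^2+|\nabla_x z|^2)$, which is what gets absorbed.

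Third, you correctly identify the degeneration of $\zeta_{a,b;\varepsilon}$ near $\{f=0\}$ as ``the other delicate point,'' but you do not supply the device that resolves it: splitting $\mf{U}\cap\mf{D}$ into $\mf{U}_\leq$ and $\mf{U}_>$ along the level set $f/\xi=R_-^2/64$ (equation \eqref{eql.obs_ext_Udec}). On $\mf{U}_>$ the weight is bounded below by $(R_-/4\cdot e^{bR_-/4})^{4a}$ and the two residual bulk terms $I_{0,>}$, $I_{1,>}$ can be absorbed into the LHS using $a\gg M^2R_+^3$ and $M^2R_+^2/a\ll\delta^2R_-^2/R_+^3$; on $\mf{U}_\leq$ the weight is small enough, $(R_-/8\cdot e^{bR_-/8})^{4a}$, to beat the lower bound coming from $\mf{U}_>$ once $a$ is large. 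Without this split you would be trying to absorb terms whose $\mf{U}_\leq$-restriction blows up relative to the degenerating LHS, and the absorption would fail. The lower bound on a fixed spacelike slab $\{|\tau|<R_-/4\}$ only emerges after this split, since that slab is contained in $\mf{U}_>$ because $r\geq R_->0$ there (this is precisely where the exterior hypothesis $0\notin\bar\Omega$, hence $R_->0$, is used).
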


\begin{remark}
Due to the exterior observability assumption  \( (0 \notin \bar{\Omega} )\), we see that \( R_- > 0 \).
\end{remark}

\subsubsection{Energy Results:} Now, we present some results related to the energy of the system \eqref{eq.wave_obs}. This will be used towards the end of the proof of observability estimate. Let \(\prime\) denote differentiation with respect to the time component of \(\phi\). We have the following results:
\begin{lemma}\label{thm.energy1}
Let \( s_1 < s_2 < t_2 < t_1 \). Then there exists a constant  \( C_1>0, \) such that
\begin{equation}\label{eq.energy1}
\int_{s_2}^{t_2} |\phi'(t, \cdot)|^2_{H^{-1}(\Omega)} \leqslant C_1(M_0 + M_1) \int_{s_1}^{t_1}|\phi(t,\cdot)|^2_{L^2(\Omega)} dt,
\end{equation}
where \(\phi\) is the transposition solution of the system \eqref{eq.wave_obs}.
\end{lemma}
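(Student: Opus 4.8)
The plan is to establish \eqref{eq.energy1} first for the more regular solutions arising from data $(\phi_0,\phi_1)\in H^1_0(\Omega)\times L^2(\Omega)$, for which $\phi\in C([-T,T];H^1_0(\Omega))\cap C^1([-T,T];L^2(\Omega))$ and all the integrations by parts below are classical, and then to extend it to an arbitrary transposition solution by density, using the boundedness of the solution map $L^2(\Omega)\times H^{-1}(\Omega)\to C([-T,T];L^2(\Omega))\cap C^1([-T,T];H^{-1}(\Omega))$. Throughout write $\Lambda:=(-\Delta_D)^{-1}$ for the inverse of the Dirichlet Laplacian on $\Omega$; it is a bounded, positive, self-adjoint operator mapping $H^{-1}(\Omega)$ into $H^1_0(\Omega)$ and $L^2(\Omega)$ into $H^2(\Omega)\cap H^1_0(\Omega)$, with $\|\psi\|_{H^{-1}(\Omega)}^2=\langle\psi,\Lambda\psi\rangle$ and $\|\psi\|_{H^{-1}(\Omega)}\le C_\Omega\|\psi\|_{L^2(\Omega)}$.

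The central device is the \emph{momentum} functional $g(t):=\big\langle\phi(t,\cdot),\,\Lambda\phi'(t,\cdot)\big\rangle_{L^2(\Omega)}$. Differentiating in $t$ and inserting $\phi''=\Delta_x\phi+\nabla_{\mc X}\phi+V\phi$ from \eqref{eq.wave_obs}, together with $\Lambda\Delta_x\phi=-\phi$, yields the pointwise identity
\[
g'(t)=\|\phi'(t)\|_{H^{-1}(\Omega)}^2-\|\phi(t)\|_{L^2(\Omega)}^2+\big\langle\phi(t),\,\Lambda\big(\nabla_{\mc X}\phi+V\phi\big)(t)\big\rangle .
\]
The only term that must be handled with care is $\mc X^x\!\cdot\!\nabla_x\phi$, which lies in $H^{-1}$ but no better since $\phi$ is only in $L^2$: one writes it in divergence form, $\mc X^x\!\cdot\!\nabla_x\phi=\nabla_x\!\cdot(\mc X^x\phi)-(\nabla_x\!\cdot\mc X^x)\phi$, and moves the divergence onto $\Lambda\phi\in H^1_0(\Omega)$. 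Combined with $\|\Lambda\big(\nabla_x\!\cdot(\mc X^x\phi)\big)\|_{H^1_0}\lesssim\|\mc X^x\|_{L^\infty}\|\phi\|_{L^2}$, $\|\Lambda(\mc X^t\phi')\|_{H^1_0}\lesssim\|\mc X^t\|_{C^1}\|\phi'\|_{H^{-1}}$ and $\|\Lambda(V\phi)\|_{L^2}\lesssim\|V\|_{L^\infty}\|\phi\|_{L^2}$, this gives
\[
\big|\big\langle\phi,\Lambda(\nabla_{\mc X}\phi+V\phi)\big\rangle\big|\;\lesssim\;(M_0+M_1)\big(\|\phi\|_{L^2(\Omega)}^2+\|\phi\|_{L^2(\Omega)}\|\phi'\|_{H^{-1}(\Omega)}\big),
\]
with implied constant depending only on $\Omega$. (The same integration by parts produces the companion energy identity $\tfrac{d}{dt}\big(\|\phi\|_{L^2}^2+\|\phi'\|_{H^{-1}}^2\big)=2\langle\Lambda\phi',\nabla_{\mc X}\phi+V\phi\rangle$, hence the well-posedness bound $\|\phi(t)\|_{L^2}^2+\|\phi'(t)\|_{H^{-1}}^2\le e^{C(M_0+M_1)|t-s|}\big(\|\phi(s)\|_{L^2}^2+\|\phi'(s)\|_{H^{-1}}^2\big)$; this is not strictly needed for what follows.)

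To close the estimate, choose a cutoff $\chi\in C^\infty_c((s_1,t_1))$ with $\chi\equiv1$ on $[s_2,t_2]$ and $0\le\chi\le1$. Multiplying the identity for $g'$ by $\chi^2$, integrating over $(s_1,t_1)$, and using $\int_{s_1}^{t_1}\tfrac{d}{dt}(\chi^2g)\,dt=0$ gives
\[
\int_{s_1}^{t_1}\!\chi^2\|\phi'\|_{H^{-1}}^2\,dt=\int_{s_1}^{t_1}\!\chi^2\|\phi\|_{L^2}^2\,dt-\int_{s_1}^{t_1}\!\chi^2\big\langle\phi,\Lambda(\nabla_{\mc X}\phi+V\phi)\big\rangle\,dt-2\int_{s_1}^{t_1}\!\chi\chi'\,g\,dt .
\]
In the last integral $|2\chi\chi'g|\le2|\chi||\chi'|\,\|\phi\|_{H^{-1}}\|\phi'\|_{H^{-1}}\le\tfrac12\chi^2\|\phi'\|_{H^{-1}}^2+2|\chi'|^2\|\phi\|_{H^{-1}}^2$, and in the lower-order integral Young's inequality applied to $\|\phi\|_{L^2}\|\phi'\|_{H^{-1}}$ splits off a further $\tfrac14\chi^2\|\phi'\|_{H^{-1}}^2$. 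Transferring the resulting $\tfrac34\int\chi^2\|\phi'\|_{H^{-1}}^2$ to the left, and using $\|\phi\|_{H^{-1}}\le C_\Omega\|\phi\|_{L^2}$, $\mathrm{supp}\,\chi'\subset(s_1,t_1)$ and $\chi\le1$, one obtains
\[
\int_{s_2}^{t_2}\|\phi'(t)\|_{H^{-1}(\Omega)}^2\,dt\le\int_{s_1}^{t_1}\chi^2\|\phi'\|_{H^{-1}}^2\,dt\le C_1(M_0+M_1)\int_{s_1}^{t_1}\|\phi(t)\|_{L^2(\Omega)}^2\,dt,
\]
that is, \eqref{eq.energy1}, with $C_1$ depending on $\Omega$ and on $\chi$ (hence on the gaps $s_2-s_1$ and $t_1-t_2$); the density argument then transfers it to the transposition solution.

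The main obstacle — and the reason the weight $\chi^2$, vanishing at $s_1$ and $t_1$, is used instead of a sharp characteristic function — is that $\|\phi'\|_{H^{-1}}^2$ gets reproduced on the right-hand side by two mechanisms: the boundary values of $g$ and the $\mc X^t\phi'$ piece of the first-order term. The $\chi^2$-weighting converts the former into an interior term controlled by $|\chi'|^2\|\phi\|_{H^{-1}}^2\lesssim\|\phi\|_{L^2}^2$, while Young's inequality keeps the coefficient of the latter below $1$, so that $\int\chi^2\|\phi'\|_{H^{-1}}^2$ can be absorbed on the left. The other point where the ultra-weak ($L^2\times H^{-1}$) setting genuinely intervenes is the low spatial regularity of $\phi$: without the divergence-form rewriting, $\mc X^x\!\cdot\!\nabla_x\phi$ would not pair with $\Lambda\phi$, and it is this rewriting, together with the self-adjointness of $\Lambda$, that lets every term be estimated by $\|\phi\|_{L^2}$ and $\|\phi'\|_{H^{-1}}$ alone.
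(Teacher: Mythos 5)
Your argument is the standard multiplier method (essentially the one in the cited reference of Zhang, for which the paper omits details) and it is correct in substance. The momentum functional $g=\langle\phi,\Lambda\phi'\rangle$, the divergence-form rewriting of $\mc{X}^x\cdot\nabla_x\phi$ so that it pairs against $\Lambda\phi\in H^1_0(\Omega)$, and the interior cutoff $\chi^2$ that converts the boundary contribution of $g$ into an absorbable interior term controlled by $|\chi'|^2\|\phi\|_{L^2}^2$ are exactly the right devices at the $L^2(\Omega)\times H^{-1}(\Omega)$ regularity level.

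There is, however, one slip in the bookkeeping at the end. The $\mc{X}^t\phi'$ piece contributes $\lesssim M_1\,\|\phi\|_{L^2}\|\phi'\|_{H^{-1}}$, and the Young split needed to keep the coefficient of $\chi^2\|\phi'\|_{H^{-1}}^2$ strictly below $1$ (so that this term can be moved to the left-hand side) necessarily carries a compensating factor $M_1^2$ in front of $\|\phi\|_{L^2}^2$. So the constant your intermediate estimates actually produce is of order $1+M_0+M_1^2$, i.e.\ quadratic in $M:=\max\{1,M_0,M_1\}$, rather than the linear factor $(M_0+M_1)$ written in your final display; that last line does not quite follow from the lines above it. In Zhang's setting $\mc{X}\equiv 0$, so this cross term (and hence the quadratic power) is absent, which is why a linear factor is natural there. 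The extra power of $M$ is harmless for the paper's applications — the largeness conditions \eqref{eql.obs_ext_a} on $a$ absorb any fixed power of $M$ — but you should either record the quadratic dependence honestly or phrase the conclusion with a generic constant depending polynomially on $M$.
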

\begin{lemma}\label{thm.energy2}
Let \(\phi\) be the transposition solution of \eqref{eq.wave_obs} Define the energy of the system to be 
\begin{equation}
E(t) := \frac{1}{2} ( |\phi(t,\cdot)|^2_{L^2(\Omega)} + |\phi'(t,\cdot)|^2_{H^{-1}(\Omega)} ).
\end{equation}
Then, there is a constant \( C_2>0 \) such that the following is satisfied for all \(s, t \in \R \) 
\begin{equation}\label{eq.energy2}
E(s) \leqslant e^{C_2(M_0 + M_1) |t-s|} E(t).
\end{equation}
\end{lemma}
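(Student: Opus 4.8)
The plan is the usual Gr\"onwall argument on $E(t)$, but carried out with care for the low regularity: one first proves the differential inequality $|E'(t)|\le C_2(M_0+M_1)E(t)$ for sufficiently regular solutions, and then transfers \eqref{eq.energy2} to general transposition solutions by density. Since $(H^1_0(\Omega)\cap H^2(\Omega))\times H^1_0(\Omega)$ is dense in $L^2(\Omega)\times H^{-1}(\Omega)$ and the solution map $(\phi_0,\phi_1)\mapsto(\phi,\phi')$ is continuous from $L^2\times H^{-1}$ into $C(\R;L^2)\times C(\R;H^{-1})$ on compact time intervals (this is the well-posedness in the transposition class, see \cite{lionj_mage:bvp1}), and since for each fixed $t$ the quantity $E(t)$ is a continuous function of $(\phi(t),\phi'(t))$, it suffices to establish \eqref{eq.energy2} with $C_2$ independent of the data for initial data in this dense subspace. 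So assume $(\phi_0,\phi_1)\in(H^1_0\cap H^2)\times H^1_0$; then $\phi\in C(\R;H^1_0)\cap C^1(\R;L^2)$ and, from the equation, $\phi''=\Delta_x\phi+\nabla_{\mc X}\phi+V\phi\in C(\R;H^{-1})$, so $\phi'\in C^1(\R;H^{-1})$ and $E\in C^1(\R)$.

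\textbf{Differentiating the energy.} Let $\mc A$ be the Dirichlet Laplacian on $\Omega$, so $\mc A^{-1}:H^{-1}(\Omega)\to H^1_0(\Omega)$ is an isomorphism and $|f|_{H^{-1}}^2=\langle f,\mc A^{-1}f\rangle$, with $\langle\cdot,\cdot\rangle$ the $H^{-1}$--$H^1_0$ pairing. Then
\[
E'(t)=\langle\phi',\phi\rangle_{L^2}+\langle\phi'',\mc A^{-1}\phi'\rangle .
\]
Substituting $\phi''=\Delta_x\phi+\nabla_{\mc X}\phi+V\phi$ and using $\langle\Delta_x\phi,\mc A^{-1}\phi'\rangle=-\langle\mc A\phi,\mc A^{-1}\phi'\rangle=-\langle\phi,\phi'\rangle_{L^2}$, the two principal terms cancel, leaving
\[
E'(t)=\langle V\phi,\mc A^{-1}\phi'\rangle+\langle\mc X^t\phi'+\mc X^x\!\cdot\!\nabla_x\phi,\ \mc A^{-1}\phi'\rangle .
\]
The first term is bounded by $|V\phi|_{H^{-1}}|\mc A^{-1}\phi'|_{H^1_0}\lesssim M_0\,|\phi|_{L^2}|\phi'|_{H^{-1}}$. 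For the $\mc X^x\!\cdot\!\nabla_x\phi$ term, integrating by parts in $x$ (no boundary contribution since $\mc A^{-1}\phi'\in H^1_0$) moves the gradient onto $\mc X^x\mc A^{-1}\phi'$, producing terms controlled by $(|\nabla_x\!\cdot\!\mc X^x|_\infty+|\mc X^x|_\infty)\,|\phi|_{L^2}|\mc A^{-1}\phi'|_{H^1_0}\lesssim M_1\,|\phi|_{L^2}|\phi'|_{H^{-1}}$. The delicate term is $\langle\mc X^t\phi',\mc A^{-1}\phi'\rangle$, which only sees $\phi'\in H^{-1}$: here one uses that multiplication by the (smooth, in particular $W^{1,\infty}$) function $\mc X^t$ is bounded on $H^{-1}(\Omega)$ with norm $\lesssim|\mc X^t|_{W^{1,\infty}}$, whence $|\langle\mc X^t\phi',\mc A^{-1}\phi'\rangle|\le|\mc X^t\phi'|_{H^{-1}}|\phi'|_{H^{-1}}\lesssim M_1\,|\phi'|_{H^{-1}}^2$. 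Combining and using $ab\le\tfrac12(a^2+b^2)$ gives $|E'(t)|\le 2C_2(M_0+M_1)E(t)$ with $C_2=C_2(\Omega,n)$.

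\textbf{Gr\"onwall and conclusion.} From $-2C_2(M_0+M_1)E\le E'\le 2C_2(M_0+M_1)E$ and $E\ge0$ one gets that $t\mapsto e^{2C_2(M_0+M_1)t}E(t)$ is nondecreasing while $t\mapsto e^{-2C_2(M_0+M_1)t}E(t)$ is nonincreasing; together these yield $E(s)\le e^{2C_2(M_0+M_1)|t-s|}E(t)$ for all $s,t\in\R$, first for regular data and then, by the density reduction, for every transposition solution of \eqref{eq.wave_obs}. Relabelling the constant gives \eqref{eq.energy2}. The only genuinely non-routine point is the low regularity: the energy identity must be computed on regular solutions and propagated by density, and within that identity the term $\langle\mc X^t\phi',\mc A^{-1}\phi'\rangle$ must be handled through $H^{-1}$ duality (boundedness of multiplication operators on $H^{-1}$) rather than by a naive $L^2$ Cauchy--Schwarz, since $|\phi'|_{L^2}$ is not controlled by $E(t)$.
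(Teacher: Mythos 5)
The paper does not actually give a proof of this lemma; it only remarks that Lemmas~4.2 and~4.3 (\ref{thm.energy1} and~\ref{thm.energy2}) ``are proved using standard energy estimate arguments, and are analogous to the ones presented in [Zhang].'' Your proof is a correct and complete implementation of exactly those standard arguments: you differentiate the $L^2\times H^{-1}$ energy after transporting it to $L^2\times H^1_0$ via $\mc A^{-1}$, observe the cancellation of the principal part, control the lower-order contributions, and close with Gr\"onwall, having first reduced to regular data by density and continuity of the solution map. The two technical points you single out --- propagating the estimate from a dense subspace, and handling $\langle\mc X^t\phi',\mc A^{-1}\phi'\rangle$ via boundedness of multiplication by a $W^{1,\infty}$ function on $H^{-1}$ rather than a naive $L^2$ Cauchy--Schwarz --- are indeed the only non-routine steps, and you treat both correctly.

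One small bookkeeping remark: with the paper's normalization $M_1 := \sup\{|\mc X^{t,x}|/\sqrt{R_+},\,|\nabla_{t,x}\mc X^{t,x}|\}$, the terms in your estimate involving $|\mc X|_\infty$ are bounded by $\sqrt{R_+}\,M_1$ rather than $M_1$, so the constant $C_2$ implicitly absorbs a factor depending on $R_+$ (hence on $\Omega$ and the chosen centre); this is consistent with your $C_2=C_2(\Omega,n)$ and with how the lemma is subsequently used in the paper (where $e^{C_2(M_0+M_1)R_+}$ need only be dominated by $e^a$ for large $a$), but it is worth stating explicitly. Also, the density reduction works equally well with data merely in $H^1_0\times L^2$ rather than $(H^2\cap H^1_0)\times H^1_0$, since for the cancellation $\langle\Delta_x\phi,\mc A^{-1}\phi'\rangle=-\langle\phi,\phi'\rangle_{L^2}$ one only needs $\phi\in H^1_0$ and $\phi'\in L^2$ together with the weak form $\langle\mc A\phi,w\rangle=\int\nabla\phi\cdot\nabla w$; this would slightly simplify the regularity bookkeeping, though your stronger choice of dense subspace is of course also valid.
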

Lemma \ref{thm.energy1} and Lemma \ref{thm.energy2} are proved using standard energy estimate arguments, and are analogous to the ones presented in \cite{Zhang}.

\noindent\textbf{Proof of Theorem \ref{thm.obs_ext}.} Now we will present the proof of Theorem \ref{thm.obs_ext}. First, we provide an outline of the proof:
\begin{enumerate}

\item We define a new function \(z\) to which we will apply the interior Carleman estimate from Theorem \ref{thm.carl_int}. The function \(z\) will have better regularity than \(\phi\).

\item We apply the Carleman estimate to \(z\). Then we deal with absorbing the necessary terms, as well as taking care of the spatial derivatives of \(z\). 

\item We go back from \( z\) to the original function \(\phi\).

\item We use energy estimate results for \(\phi\) to prove the observability estimate \eqref{eq.obs_ext_est}.

\end{enumerate}
 
We will use a special case of the setting and the result mentioned in Theorem \ref{thm.carl_int}, namely the case when \( \boldsymbol{m=2} \). Hence, in the following discussion we have \( \mf{U} = \R \times \mc{U} = \R^2 \times \Omega \). The corresponding definitions and notations are written analogously.

\vspace{-1mm}

\subsubsection{ Defining a new function \(z\).}\label{sssec_zdef}
Let \(\phi\) be the solution of \eqref{eq.wave_obs}. Since we are using the special case \(m=2\), we get
\begin{equation*}
\mf{D} := \{f>0\} = \{ r^2 > t_1^2 + t_2^2 \} \subset \R^{2+n}.
\end{equation*}
We will make several applications of this property of \(\mf{D}\) throughout the proof.

\noindent Next, define a new function \(z\) as follows
\begin{equation}\label{eq.z_def}
z(t_1,t_2,x) := \int_{t_1}^{t_2} \phi(s,x)ds, \quad \forall\ (t_1,t_2,x) \in \mf{U}.
\end{equation}
From the above expression, we can calculate the corresponding derivatives for \(z\) as follows
\begin{align}\label{eq.z_deriv}
\notag \Delta z(t_1,t_2,x) & = \int_{t_1}^{t_2} \Delta \phi(s,x)ds,\\
z_{t_1}(t,t_2,x)= -\phi(t,x),& \qquad z_{t_2}(t_1,t,x)= \phi(t,x), \\
\notag z_{t_1t_1}(t,t_2,x)= -\phi'(t,x),& \qquad z_{t_2t_2}(t_1,t,x)= \phi'(t,x).
\end{align}
Using the wave equation satisfied by \(\phi\), we get the following equation for \(z\)
\begin{align*} \numberthis \label{eq.z_wave}
-\square z(t_1,t_2,x) & := z_{t_1t_1}(t_1,t_2,x) + z_{t_2t_2}(t_1,t_2,x) - \Delta z(t_1,t_2,x) \\
& \ =  \int_{t_1}^{t_2} ( V(s,x) z_{t_2}(t_1,s,x) + \nabla_{\mc{X}(s,x)}z_{t_2}(t_1,s,x) ) ds.
\end{align*}

\subsubsection{Application of Carleman estimate.}
Let \( \delta \ll 1 \). Then, we make the following choice of constants: choose \( a \geq (n+2)^2\) such that
\begin{equation}\label{eql.obs_ext_a} 
a \gg \delta^{-2} R_-^{-2} M^2R_+^5, \quad a \gg R_-^{-3}M R_+^2, \quad a \gg M^2 R_+^3, \quad a \gg \delta^{-1} M^2 R_+^5, \quad a \gg M R_+
\end{equation}
and choose \(\varepsilon \) and \(b\) such that
\begin{equation}\label{eql.obs_ext_b}
\varepsilon:= \delta^2 R_+^{-1}, \qquad b:= \delta R_+^{-1}.
\end{equation}
Now we present a lemma which shows that \(\zeta_{a,b;\varepsilon} f\) is a decreasing function of \(t^2\). This result will be used later, while taking care of the extra time integral that was introduced earlier.

\begin{lemma}\label{thm.f/xi_inc}
Let \(\xi:= (1 + \varepsilon u) (1 - \varepsilon v) = 1- \varepsilon r + \varepsilon^2 f\). Then we have the following property
\begin{equation}\label{eq.f/xi}
t^2 < t_1^2 \quad \Rightarrow \quad \frac{f}{\xi} (t, t_2, x) > \frac{f}{\xi}(t_1, t_2, x).
\end{equation}
Moreover, we also have
\begin{equation}\label{eq.zetaf/xi}
(\zeta_{a,b;\varepsilon} f) (t, t_2, x) > (\zeta_{a,b;\varepsilon} f) (t_1, t_2, x).
\end{equation}
\end{lemma}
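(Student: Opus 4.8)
The plan is to fix $x$ (hence $r=|x|$) and exploit the fact that, in the $m=2$ setting, every quantity entering the claim depends on the time variable $t=(t_1,t_2)$ only through $\tau^2=t_1^2+t_2^2$. So it suffices to show that, regarded as functions of the single real parameter $p:=\tau^2$ on $\mf{D}=\{p<r^2\}$, both $f/\xi$ and $\zeta_{a,b;\varepsilon}f$ are \emph{strictly decreasing}. Indeed, with $t_2,x$ held fixed, the hypothesis $t^2<t_1^2$ says precisely that $p$ is strictly smaller at $(t,t_2,x)$ than at $(t_1,t_2,x)$; and since $\mf{D}=\{r^2>t_1^2+t_2^2\}$, the point $(t_1,t_2,x)\in\mf{D}$ forces $(t,t_2,x)\in\mf{D}$ as well, so the comparison is meaningful. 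Both inequalities \eqref{eq.f/xi} and \eqref{eq.zetaf/xi} then follow immediately from strict monotonicity.

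The computation rests on three algebraic identities in null coordinates. With $\xi=(1+\varepsilon u)(1-\varepsilon v)$ as in the statement and $\eta:=(1-\varepsilon u)(1+\varepsilon v)$, Remark \ref{rmk_xirf} and the analogous expansion for $\eta$ (using $v-u=r$, $-uv=f$) give $\xi=1-\varepsilon r+\varepsilon^2 f$ and $\eta=1+\varepsilon r+\varepsilon^2 f$, while $f=\tfrac14(r^2-\tau^2)=\tfrac14(r^2-p)$. Thus $f,\xi,\eta$ are polynomial, hence smooth, in $p$, with $\tfrac{df}{dp}=-\tfrac14$ and $\tfrac{d\xi}{dp}=\tfrac{d\eta}{dp}=-\tfrac{\varepsilon^2}{4}$; moreover $\xi,\eta>0$ on $\mf{D}$ by Remark \ref{rmk.uvfbound} together with $\varepsilon r\le\varepsilon R_+=\delta^2\ll1$ from \eqref{eql.obs_ext_b}, which in particular makes $f/\xi$ smooth there. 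A direct computation yields
\[
\frac{d}{dp}\!\left(\frac{f}{\xi}\right)=\frac{-\tfrac14(\xi-\varepsilon^2 f)}{\xi^2}=\frac{-\tfrac14(1-\varepsilon r)}{\xi^2}<0,\qquad \frac{d}{dp}\!\left(\frac{f}{\eta}\right)=\frac{-\tfrac14(\eta-\varepsilon^2 f)}{\eta^2}=\frac{-\tfrac14(1+\varepsilon r)}{\eta^2}<0,
\]
where the first sign uses $1-\varepsilon r>0$. The first display is exactly \eqref{eq.f/xi}.

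For \eqref{eq.zetaf/xi} I would rewrite the weight \eqref{eq.carleman_weight} as $\zeta_{a,b;\varepsilon}=(f/\xi)^{2a}\exp\!\big[4ab\,(f/\eta)^{1/2}\big]$, so that on $\mf{D}$
\[
\zeta_{a,b;\varepsilon}\,f = f\cdot\Big(\frac{f}{\xi}\Big)^{2a}\cdot\exp\!\Big[4ab\Big(\frac{f}{\eta}\Big)^{1/2}\Big].
\]
Each of the three factors is positive on $\mf{D}$ (as $f,\,f/\xi,\,f/\eta>0$ there) and strictly decreasing in $p$: $f$ since $\tfrac{df}{dp}=-\tfrac14<0$; $(f/\xi)^{2a}$ since $f/\xi$ is positive and strictly decreasing and $s\mapsto s^{2a}$ is increasing on $(0,\infty)$; and the exponential factor since $f/\eta$ is positive and strictly decreasing, $s\mapsto s^{1/2}$ is increasing on $(0,\infty)$, and $s\mapsto e^{4abs}$ is increasing. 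A product of positive, strictly decreasing functions is strictly decreasing, so $\zeta_{a,b;\varepsilon}f$ is strictly decreasing in $p=\tau^2$, giving \eqref{eq.zetaf/xi}. The only genuinely non-formal point — and thus the one to watch — is the sign condition $\varepsilon r<1$, which simultaneously keeps $\xi$ positive and makes $\tfrac{d}{dp}(f/\xi)$ negative; this is precisely where the smallness $\varepsilon\ll R_+^{-1}$ fixed in \eqref{eql.obs_ext_b} is used, everything else being routine.
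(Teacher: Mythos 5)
Your proof is correct and follows essentially the same line as the paper's, which also reduces the claim to monotonicity in $\tau^2$ at fixed $x$: the paper writes $f/\xi = \varepsilon^{-2} - (1-\varepsilon r)\varepsilon^{-2}\xi^{-1}$, observes $\xi$ is increasing in $f$ and $1-\varepsilon r>0$, hence $f/\xi$ is increasing in $f$ (equivalently decreasing in $\tau^2$); you obtain the same conclusion by differentiating in $p=\tau^2$, which is a cosmetic difference.

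One genuine improvement you make is worth noting. The paper's proof of \eqref{eq.zetaf/xi} says only that it "can be directly obtained from \eqref{eq.f/xi} and the definition of $\zeta_{a,b;\varepsilon}$," but the exponential part of the weight in \eqref{eq.carleman_weight} involves the \emph{different} rational quantity $f/\eta$ with $\eta:=(1-\varepsilon u)(1+\varepsilon v)=1+\varepsilon r+\varepsilon^2 f$, not $f/\xi$. Your decomposition $\zeta_{a,b;\varepsilon}f = f\cdot(f/\xi)^{2a}\cdot\exp\!\bigl[4ab(f/\eta)^{1/2}\bigr]$ makes this explicit, and you supply the companion monotonicity computation $\tfrac{d}{dp}(f/\eta)=-\tfrac14(1+\varepsilon r)/\eta^2<0$, which is actually needed and is missing from the paper's terse justification. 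So this is the same approach, but your write-up closes a small gap the paper leaves implicit.
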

\noindent \begin{proof}
Since \eqref{eq.zetaf/xi} can be directly obtained from \eqref{eq.f/xi} and the definition of \(\zeta_{a,b;\varepsilon}\), we only need to check \eqref{eq.f/xi}. For this purpose, note the following calculation
\begin{align*}
\frac{f}{\xi} = \frac{f}{1- \varepsilon r + \varepsilon^2 f} & = \frac{\left[ f + \frac{(1 - \varepsilon r ) }{ \varepsilon^2} \right] - \frac{( 1- \varepsilon r )}{\varepsilon^2}}{1- \varepsilon r + \varepsilon^2 f} = \frac{1}{\varepsilon^2} - \frac{( 1- \varepsilon r )}{\varepsilon^2} . \frac{1}{ ( 1- \varepsilon r + \varepsilon^2 f ) } .
\end{align*}
Due to \eqref{eql.obs_ext_b}, we have \( (1 - \varepsilon r ) > 0\). This proves \eqref{eq.f/xi} and completes the proof of the lemma.
\end{proof}

Now we will apply the Carleman estimate given by Theorem \ref{thm.carl_int} to the function \(z\) and the region \( \mf{U} \cap \mf{D} \), with the above chosen constants \(a, b, \varepsilon\). The previous assumptions in this section ensure that the hypotheses of Theorem \ref{thm.carl_int} are satisfied by this choice. In particular, since \(\mf{U} \cap \mf{D}\) is bounded, \eqref{eq.carl_int_domain} is satisfied with \(R:=R_+\). Using \eqref{eq.uvf_bound1} and \eqref{eql.obs_ext_b}, we see that \eqref{eq.carl_int} reduces to
\begin{align*}
\frac{ C \delta^2 }{R_+^2}  \int_{\mf{U}\cap \mf{D}} & \zeta_{a,b;\varepsilon} (|u\partial_u z |^2 +  |v\partial_v z |^2 + f g^{ab}\slashed\nabla_a z \slashed\nabla_b z - f g^{CD} \tilde\nabla_C z \tilde\nabla _D z  ) +\frac{ C \delta a^2}{R_+^2} \int_{\mf{U}\cap \mf{D}}\zeta_{a,b;\varepsilon} z^2  \\
\label{eq.carl_z} \numberthis & \leqslant \frac{1}{a}\int_{\mf{U} \cap \mf{D}} \zeta_{a,b;\varepsilon} f |\square z|^2 +  a^2 R_+^3 \int_{( (-T,T)^2 \times \omega ) \cap \mf{D} } \zeta_{a,b;\varepsilon} f^{-2} ( z_{t_1}^2 + z_{t_2}^2 )  \\
& \qquad + a^4 R_+^4 \int_{( (-T,T)^2 \times \omega ) \cap \mf{D}} \zeta_{a,b;\varepsilon} f^{-3} z^2,
\end{align*}
where the integral region for the last two terms on the RHS is obtained by noting that, since \(T> R_+\), we get
\[ ( \R^2 \times \omega  ) \cap \mf{D} = ( (-T,T)^2 \times \omega ) \cap \mf{D}.\]
First we deal with the bulk term, which due to \eqref{eq.z_wave} is as follows
\begin{equation}\label{eq.carl_square_z}
\int_{\mf{U} \cap \mf{D}} \zeta_{a,b;\varepsilon}f |\square z|^2 = \int_{\mf{U} \cap \mf{D}} \zeta_{a,b;\varepsilon}f \left( \int_{t_1}^{t_2} \{ V(s, x) z_{t_2}(t_1,s,x) + \nabla_{\mc{X}(s, x)}z_{t_2}(t_1,s,x)\} ds \right)^2.
\end{equation}
For the integral corresponding to the potential term, using \eqref{eq.z_deriv} shows 
\begin{align}
\notag \int_{\mf{U}\cap\mf{D}} \zeta_{a,b;\varepsilon}f & \left( \int_{t_1}^{t_2} V(s, x) z_{t_2}(t_1,s,x)ds \right)^2 dt_1 dt_2 dx \\
\notag & = \int_{\mf{U}\cap\mf{D}} \zeta_{a,b;\varepsilon}f \left( \int_{t_1}^0  V(s, x) z_{t_2}(t_1,s,x) ds  + \int_0^{t_2}  V(s, x) z_{t_2}(t_1,s,x) ds \right)^2 dt_1 dt_2 dx \\
\notag & = \int_{\mf{U}\cap\mf{D}} \zeta_{a,b;\varepsilon}f \left( \int_0^{t_1}  V(s, x) z_{t_1}(s,t_2,x) ds  + \int_0^{t_2}  V(s, x) z_{t_2}(t_1,s,x) ds \right)^2 dt_1 dt_2 dx\\
& \leqslant M_0^2 \int_{\mf{U}\cap\mf{D}} \zeta_{a,b;\varepsilon}f \left( \Big| \int_0^{t_1} |z_{t_1}(s,t_2,x)|^2 ds \Big| + \Big| \int_0^{t_2} |z_{t_2}(t_1,s,x)|^2 ds \Big| \right) dt_1 dt_2 dx. \label{eq.z_Vz}
\end{align}
Our next goal is to take care of the extra time integral. If we only consider the term with \\ \(|z_{t_1}(s,t_2,x)|\) in the above inequality, after an application of Lemma \ref{thm.f/xi_inc}, we get
\begin{align}
\notag  M_0^2 \int_{\mf{U}\cap\mf{D}} & \zeta_{a,b;\varepsilon}f  \Big| \int_0^{t_1} |z_{t_1}(s,t_2,x)|^2 ds \Big| dt_1 dt_2 dx  \\
\notag & =  M_0^2 \int_{\Omega} dx \int_{-r}^r dt_2 \bigg\{ \int_{- \sqrt{r^2 - t_2^2} }^0 dt_1 \bigg(  \zeta_{a,b;\varepsilon} f(t_1,t_2,x)  \int_{t_1}^0 |z_{t_1}(s,t_2,x)|^2  ds \bigg) \\
\notag & \hspace{3.2cm}  + \int_0^{\sqrt{r^2 - t_2^2}} dt_1 \bigg( \zeta_{a,b;\varepsilon} f(t_1,t_2,x)  \int_0^{t_1} |z_{t_1}(s,t_2,x)|^2  ds \bigg) \bigg\} \\
\notag & \leqslant M_0^2 \int_{\Omega} dx \int_{-r}^r dt_2 \bigg\{ \int_{-\sqrt{r^2 - t_2^2}}^0 dt_1 \bigg( \int_{t_1}^0  \zeta_{a,b;\varepsilon} f(s,t_2,x)  |z_{t_1}(s,t_2,x)|^2  ds \bigg) \\
\notag & \hspace{3.2cm} + \int_0^{\sqrt{r^2 - t_2^2}} dt_1 \bigg( \int_0^{t_1}  \zeta_{a,b;\varepsilon} f(s,t_2,x)  |z_{t_1}(s,t_2,x)|^2  ds \bigg) \bigg\} \\
\notag & \leqslant  M_0^2 \int_{\Omega} r dx \int_{-r}^r dt_2  \bigg\{ \int_{-\sqrt{r^2 - t_2^2}}^0  \zeta_{a,b;\varepsilon} f(s,t_2,x)  |z_{t_1}(s,t_2,x)|^2  ds \\
\notag & \hspace{3.2cm} + \int_0^{\sqrt{r^2 - t_2^2}}  \zeta_{a,b;\varepsilon} f(s,t_2,x)  |z_{t_1}(s,t_2,x)|^2  ds \bigg\}\\
\notag & \leqslant  M_0^2 R_+ \int_{\Omega} dx \int_{-r}^r dt_2 \int_{-\sqrt{r^2 - t_2^2}}^{\sqrt{r^2 - t_2^2}} \zeta_{a,b;\varepsilon} f(s,t_2,x)  |z_{t_1}(s,t_2,x)|^2  ds \\
& \leqslant M_0^2 R_+\int_{\mf{U}\cap\mf{D}} \zeta_{a,b;\varepsilon}f  |z_{t_1}(t_1,t_2,x)|^2 dt_1 dt_2 dx. \label{eq.z_Vz1}
\end{align}
Similarly, we can also get the same estimate for the integral term with \( |z_{t_2}(t_1,s,x) | \) in \eqref{eq.z_Vz},
\begin{equation}\label{eq.z_Vz2}
 M_0^2 \int_{\mf{U}\cap\mf{D}} \zeta_{a,b;\varepsilon}f  \Big| \int_0^{t_1} |z_{t_2}(t_1,s,x)|^2 ds \Big| \leqslant  M_0^2 R_+ \int_{\mf{U}\cap\mf{D}} \zeta_{a,b;\varepsilon}f  |z_{t_2}(t_1,t_2,x)|^2 dt_1 dt_2 dx.
\end{equation}
Combining \eqref{eq.z_Vz}-\eqref{eq.z_Vz2}, gives us 
\begin{equation}\label{eq.z_V}
 \int_{\mf{U}\cap\mf{D}} \zeta_{a,b;\varepsilon}f  \left( \int_{t_1}^{t_2} V(s, x) z_{t_2}(t_1,s,x)ds \right)^2 \leqslant M_0^2 R_+ \int_{\mf{U}\cap\mf{D}} \zeta_{a,b;\varepsilon}f (z_{t_1}^2 + z_{t_2}^2) dt_1 dt_2 dx.
\end{equation}
Next, we consider the first order term in \eqref{eq.carl_square_z}. Note that, the vector field we have is \\
\( \mc{X} = \mc{X}(s,x)\). Hence, we let \( p\) represent components in \( (s, x) \). Working similar to \eqref{eq.z_Vz}, we get
\begin{align*}
\int_{\mf{U} \cap \mf{D}} & \zeta_{a,b;\varepsilon}f \left( \int_{t_1}^{t_2} \nabla_{\mc{X}(s, x)}z_{t_2}(t_1,s,x) ds \right)^2 \\
& = \int_{\mf{U} \cap \mf{D}} \zeta_{a,b;\varepsilon}f \left( \int_{t_1}^{t_2} \mc{X}^p(s,x) \nabla_p z_{t_2}(t_1,s,x) ds \right)^2  dt_1 dt_2 dx \\
& = \int_{\mf{U}\cap\mf{D}} \zeta_{a,b;\varepsilon}f \left( \int_0^{t_1} \mc{X}^p(s,x) \nabla_p z_{t_1}(s,t_2,x) ds + \int_0^{t_2} \mc{X}^p(s,x) \nabla_p z_{t_2}(t_1,s,x) ds \right)^2 dt_1 dt_2 dx. \numberthis \label{eq.z_X1}
\end{align*}
For the \(z_{t_1} \) term in the RHS of the above equation, using integration by parts shows
\begin{align*}
\int_0^{t_1} \mc{X}^p(s,x) \nabla_p z_{t_1}(s,t_2,x) ds = -\int_0^{t_1} \partial_s \mc{X}^p(s,x) \nabla_p z (s,t_2,x) ds + \mc{X}^p(s,x) \nabla_p z (s,t_2,x) \big|_0^{t_1}.
\end{align*}
We get an analogous identity for the \(z_{t_2}\) term. Then, using both of these in \eqref{eq.z_X1} shows
\begin{align*}
\numberthis \label{eq.z_X2} \int_{\mf{U} \cap \mf{D}} & \zeta_{a,b;\varepsilon}f  \left( \int_{t_1}^{t_2} \nabla_{\mc{X}(s,x)}z_{t_2}(t_1,s,x) ds \right)^2 \\
& = \int_{\mf{U} \cap \mf{D}} \zeta_{a,b;\varepsilon}f \bigg( -\int_0^{t_1} \partial_s \mc{X}^p(s,x) \nabla_p z (s,t_2,x) ds - \int_0^{t_2} \partial_s \mc{X}^p(s,x) \nabla_p z (t_1,s,x) ds \\
& \qquad \qquad \qquad \ \qquad + \mc{X}^p(s,x) \nabla_p z (s,t_2,x) \big|_0^{t_1} + \mc{X}^p(s,x) \nabla_p z (t_1,s,x) \big|_0^{t_2} \bigg)^2 \\
&  \leqslant M_1^2 \int_{\mf{U}\cap\mf{D}} \zeta_{a,b;\varepsilon}f \left( \Big| \int_0^{t_1} |\nabla_p z(s,t_2,x)|^2 ds \Big| + \Big| \int_0^{t_2} |\nabla_p z(t_1,s,x)|^2 ds \Big| \right) \\
& \qquad + 2 M_1^2 R_+ \int_{\mf{U}\cap\mf{D}} \zeta_{a,b;\varepsilon}f |\nabla_{t_1,t_2,x} z(t_1,t_2,x)|^2 , 
\end{align*}
where we used \eqref{eq.obs_ext_MR} and \eqref{eq.z_deriv}, to note that
\begin{align*}
& \mc{X}^p(s,x) \nabla_p z(s,t_2,x) \big|_0^{t_1} +  \mc{X}^p(s,x) \nabla_p z (t_1,s,x) \big|_0^{t_2} \leqslant 2 M_1^2 R_+ |\nabla z_{t_1,t_2,x}(t_1, t_2, x)|,
\end{align*}
where the \(s=0\) terms from the two expressions cancel out.
Putting together \eqref{eq.z_X1} and \eqref{eq.z_X2}, and applying the method we used for deriving \eqref{eq.z_V}, we get
\begin{align}
\label{eq.z_X} \int_{\mf{U}\cap\mf{D}} \zeta_{a,b;\varepsilon}f \bigg| \int_{t_1}^{t_2} \nabla_\mc{X} z_{t_2}(t_1,s, x) ds \bigg|^2 \leqslant 3 M_1^2 R_+ \int_{\mf{U}\cap\mf{D}} \zeta_{a,b;\varepsilon}f |  \nabla_{t_1,t_2,x} z (t_1,t_2,x) |^2 dt_1 dt_2 dx.
\end{align}


\noindent Using \eqref{eq.z_V} and \eqref{eq.z_X} in \eqref{eq.carl_square_z}, shows 
\begin{align}
\label{eq.z_VX} \int_{\mf{U}\cap\mf{D}} \zeta_{a,b;\varepsilon}f |\square z|^2 \leqslant  8 M^2 R_+  \int_{\mf{U}\cap\mf{D}} \zeta_{a,b;\varepsilon}f (z_{t_1}^2 + z_{t_2}^2 + |\nabla_x z|^2 ) dt_1 dt_2 dx . 
\end{align}
Then \eqref{eq.carl_z} reduces to
\begin{align*}
\frac{ C \delta^2 }{R_+^2} & \int_{\mf{U}\cap \mf{D}} \zeta_{a,b;\varepsilon} (|u\partial_u z |^2 +  |v\partial_v z |^2 + f g^{ab}\slashed\nabla_a z \slashed\nabla_b z - f g^{CD} \tilde\nabla_C z \tilde\nabla _D z  ) +\frac{ C \delta a^2}{R_+^2} \int_{\mf{U}\cap \mf{D}}\zeta_{a,b;\varepsilon} z^2  \\
& \leqslant \frac{M^2 R_+}{a}\int_{\mf{U}\cap \mf{D}} \zeta_{a,b;\varepsilon} f (z_{t_1}^2 + z_{t_2}^2 + |\nabla_x z|^2 ) + a^2 R_+^3 \int_{( (-T,T)^2 \times \omega ) \cap \mf{D} } \zeta_{a,b;\varepsilon} f^{-2} ( z_{t_1}^2 + z_{t_2}^2 )  \numberthis \label{eq.pre_obs}\\
& \qquad + a^4 R_+^4 \int_{( (-T,T)^2 \times \omega ) \cap \mf{D}} \zeta_{a,b;\varepsilon} f^{-3} z^2. 
\end{align*}
From \eqref{eq.z_deriv}, it is clear that \(z, z_{t_1}, z_{t_2}\) are expressed in \(\phi\) and a time integral of \(\phi\). We will see that these terms can be dealt with conveniently and they yield the term present in the RHS of \eqref{eq.obs_ext_est}. Hence, it is important to get rid of the other term: the spatial derivative of \(z\). Thus, we will estimate the \(\nabla_x z\) term by \(z, z_{t_1}, z_{t_2}\). We use Lemma \ref{thm.der_carl_est} to conclude that \( |\nabla^\alpha (\zeta_{a,b;\varepsilon} f)| \leqslant a K R_+ \zeta_{a,b;\varepsilon}\), for some \(K>0\). Also, due to \eqref{eq.wave_obs} and \eqref{eq.z_def}, we have \( z=0 \) on \(\partial\mf{U}\cap \mf{D}\). Then using \eqref{eq.z_wave}, integration by parts, and \eqref{eq.z_VX} shows
\begin{align*}
& \frac{M^2 R_+}{a} \int_{\mf{U}\cap \mf{D}} \zeta_{a,b;\varepsilon} f |\nabla_x z|^2 \\
& \ = - \frac{M^2 R_+}{a} \int_{\mf{U}\cap \mf{D}} \nabla_x(\zeta_{a,b;\varepsilon} f)\cdot (\nabla_x z) z - \frac{M^2 R_+}{a} \int_{\mf{U}\cap \mf{D}} \zeta_{a,b;\varepsilon} f \Delta_x z \cdot z + \frac{M^2 R_+}{a}\int_{\partial\mf{U}\cap \mf{D}} \zeta_{a,b;\varepsilon} f \mc{N} z \cdot z \\
& \ \leqslant K M^2 R_+^2 \int_{\mf{U}\cap \mf{D}} \zeta_{a,b;\varepsilon} |\nabla_x z \cdot z | - \frac{M^2 R_+}{a} \int_{\mf{U}\cap \mf{D}} \zeta_{a,b;\varepsilon} f (z_{t_1t_1} + z_{t_2t_2} ) z \\
& \ \qquad + \frac{M^2 R_+}{a}  \int_{\mf{U}\cap \mf{D}} \zeta_{a,b;\varepsilon} f z \left( \int_{t_1}^{t_2} V z_{t_2} + \nabla_{\mc{X}}z_{t_2} d\tau \right) \\
& \ \leqslant K M^2 R_+^2 \int_{\mf{U}\cap \mf{D}} \zeta_{a,b;\varepsilon} |\nabla_x z \cdot z |  + \frac{M^2 R_+}{a} \int_{\mf{U}\cap \mf{D} } [ \partial_{t_1}(\zeta_{a,b;\varepsilon} f) z_{t_1} + \partial_{t_2}(\zeta_{a,b;\varepsilon} f) z_{t_2}] z \\
& \ \qquad + \frac{M^2 R_+}{a} \int_{\mf{U}\cap \mf{D}} \zeta_{a,b;\varepsilon} f (z_{t_1}^2 + z_{t_2}^2) + \frac{ 4 M^4 R_+^2}{a} \int_{\mf{U}\cap \mf{D}} \zeta_{a,b;\varepsilon} f z^2 \\
& \ \qquad + \frac{1}{16a}\int_{\mf{U}\cap \mf{D}} \zeta_{a,b;\varepsilon} f \left( \int_{t_1}^{t_2} V z_{t_2} + \nabla_{\mc{X}}z_{t_2} d\tau \right)^2 \\
& \ \leqslant 2a K^2 M^2 R_+^3 \int_{\mf{U}\cap \mf{D}} \zeta_{a,b;\varepsilon} f^{-1} z^2 + \frac{M^2 R_+}{8 a} \int_{\mf{U}\cap \mf{D}} \zeta_{a,b;\varepsilon} f |\nabla_x z|^2 + K M^2 R_+^2 \int_{\mf{U}\cap \mf{D}} \zeta_{a,b;\varepsilon} |(z_{t_1} + z_{t_2} ) z| \\
& \ \qquad + \frac{M^2 R_+}{a} \int_{\mf{U}\cap \mf{D}} \zeta_{a,b;\varepsilon} f (z_{t_1}^2 + z_{t_2}^2) + \frac{ 4 M^4 R_+^2}{a} \int_{\mf{U}\cap \mf{D}} \zeta_{a,b;\varepsilon} f z^2 \\
& \ \qquad + \frac{ 8 M^2 R_+}{16 a} \int_{\mf{U}\cap\mf{D}} \zeta_{a,b;\varepsilon}f (z_{t_1}^2 + z_{t_2}^2 + |\nabla_x z|^2 ),
\end{align*}
where we also used H\"older's inequality. Then, first absorbing the \( \nabla_x z \) terms into the LHS and then absorbing the constant \( K \) into the inequality sign (\(\leqslant \rightarrow \lesssim\)), shows that
\begin{align*}
\frac{M^2 R_+}{a} & \int_{\mf{U}\cap \mf{D}} \zeta_{a,b;\varepsilon} f |\nabla_x z|^2 \\ 
& \lesssim  a M^2 R_+^3 \int_{\mf{U}\cap \mf{D}} \zeta_{a,b;\varepsilon}f^{-1} z^2 + \frac{M^2 R_+}{a} \int_{\mf{U}\cap \mf{D}} \zeta_{a,b;\varepsilon} f (z_{t_1}^2 + z_{t_2}^2 ) + \frac{ M^4 R_+^2}{a} \int_{\mf{U}\cap \mf{D}} \zeta_{a,b;\varepsilon} f z^2\\
 & \lesssim a M^2 R_+^3 \int_{\mf{U}\cap \mf{D}} \zeta_{a,b;\varepsilon} f^{-1} z^2 + \frac{M^2 R_+}{a} \int_{\mf{U}\cap \mf{D}} \zeta_{a,b;\varepsilon} f (z_{t_1}^2 + z_{t_2}^2 ), \numberthis \label{eql.obs_ext_1a}
\end{align*}
where we used \eqref{eql.obs_ext_a} to obtain the final constants that appear in front of the integral terms along with the fact that due to \eqref{eq.uvf_bound1}
\begin{align*}
\frac{ M^4 R_+^2}{a} \int_{\mf{U}\cap \mf{D}} \zeta_{a,b;\varepsilon} f z^2 \leqslant \frac{ M^4 R_+^2}{a} \int_{\mf{U}\cap \mf{D}} \zeta_{a,b;\varepsilon} f^2 f^{-1} z^2 & \leqslant \frac{ M^4 R_+^6}{a} \int_{\mf{U}\cap \mf{D}} \zeta_{a,b;\varepsilon} f^{-1} z^2 \\
& \ll a M^2 R_+^3 \int_{\mf{U}\cap \mf{D}} \zeta_{a,b;\varepsilon} f^{-1} z^2.
\end{align*}
Applying \eqref{eql.obs_ext_1a} to \eqref{eq.pre_obs}, gives us 
\begin{align*}
\numberthis \label{eq:obs_ext_2} \frac{ C \delta^2 }{R_+^2} \int_{\mf{U}\cap \mf{D}}  \zeta_{a,b;\varepsilon} & (|u\partial_u z |^2 +  |v\partial_v z |^2 + f g^{ab}\slashed\nabla_a z \slashed\nabla_b z - f g^{CD} \tilde\nabla_C z \tilde\nabla _D z  ) + \frac{ C \delta a^2}{R_+^2} \int_{\mf{U}\cap \mf{D}}\zeta_{a,b;\varepsilon} z^2 \\
 & \leqslant  \frac{M^2 R_+}{a} \int_{\mf{U}\cap \mf{D}} \zeta_{a,b;\varepsilon} f (z_{t_1}^2 + z_{t_2}^2 ) + a M^2 R_+^3 \int_{\mf{U}\cap \mf{D}} \zeta_{a,b;\varepsilon} f^{-1} z^2  \\
& \qquad  + a^2 R_+^3 \int_{( (-T,T)^2 \times \omega ) \cap \mf{D} } \zeta_{a,b;\varepsilon} f^{-2} ( z_{t_1}^2 + z_{t_2}^2 ) \\
& \qquad + a^4 R_+^4 \int_{( (-T,T)^2 \times \omega ) \cap \mf{D}} \zeta_{a,b;\varepsilon} f^{-3} z^2. 
\end{align*}
We will use the following notations for convenience
\begin{align*}
I_0 & := a M^2 R_+^3 \int_{\mf{U}\cap \mf{D}} \zeta_{a,b;\varepsilon} f^{-1} z^2 , \\
I_1 & :=  \frac{M^2 R_+}{a}\int_{\mf{U}\cap \mf{D}} \zeta_{a,b;\varepsilon} f (z_{t_1}^2 + z_{t_2}^2 ),  \\
I_\omega & := a^2 R_+^3 \int_{( (-T,T)^2 \times \omega ) \cap \mf{D}} \zeta_{a,b;\varepsilon} f^{-2} ( z_{t_1}^2 + z_{t_2}^2 ) + a^4 R_+^4 \int_{( (-T,T)^2 \times \omega ) \cap \mf{D}} \zeta_{a,b;\varepsilon} f^{-3} z^2.
\end{align*}

\subsubsection{Splitting the domain.} Since the weight \(  \zeta_{a,b;\varepsilon} \) vanishes near the boundary of the null cone, the LHS of \eqref{eq:obs_ext_2} becomes singular in that region and we cannot capture the \(L^2 \) norm of \( z\) any more. This will create an issue with absorption of some of the RHS terms into the LHS. To overcome this problem, we split the domain \(\mf{U} \cap \mf{D} \) as follows:
\begin{align}
\label{eql.obs_ext_Udec} \mf{U}_\leq &:= \mf{U} \cap \mf{D} \cap \left\{ \frac{ f }{ ( 1 + \varepsilon u ) ( 1 - \varepsilon v ) } \leq \frac{ R_-^2 }{ 64 } \right\} \text{,} \\
\notag \mf{U}_> &:= \mf{U} \cap \mf{D} \cap \left\{ \frac{ f }{ ( 1 + \varepsilon u ) ( 1 - \varepsilon v ) } > \frac{ R_-^2 }{ 64 } \right\} \text{.}
\end{align}
Due to \eqref{eq.uvf_bound1}, \eqref{eql.obs_ext_Udec}, and \eqref{eq.conf_comp} we have the following estimates on \(\mf{U}_>,\)
\[ v = \frac{ f}{ - u } \gtrsim \frac{ R_-^2 }{ R_+ } \text{,} \qquad - u = \frac{ f }{ v } \gtrsim \frac{ R_-^2 }{ R_+ } \text{.} \]
Let us write \( I_0\) as
\begin{equation}
I_0 = a M^2 R_+^3 \int_{\mf{U}_\leqslant} \zeta_{a,b;\varepsilon} f^{-1} z^2 + a M^2 R_+^3 \int_{\mf{U}_>} \zeta_{a,b;\varepsilon} f^{-1} z^2 =: I_{0, \leqslant} + I_{0, >},
\end{equation}
and \(I_1\) as 
\begin{equation}
I_1 = \frac{M^2 R_+}{a}\int_{\mf{U}_\leqslant} \zeta_{a,b;\varepsilon} f (z_{t_1}^2 + z_{t_2}^2 ) + \frac{M^2 R_+}{a}\int_{\mf{U}_>} \zeta_{a,b;\varepsilon} f (z_{t_1}^2 + z_{t_2}^2 ) =: I_{1, \leqslant} + I_{1,>}.
\end{equation}
After restricting the domain on the LHS of \eqref{eq:obs_ext_2}, we get
\begin{equation} \label{eq:obs_ext_2b}
C \int_{\mf{U}_>}\zeta_{a,b;\varepsilon} \left[ \frac{\delta^2 R_-^2}{R_+^3}( - u |\partial_u z |^2 +  v |\partial_v z |^2 + v g^{ab}\slashed\nabla_a z \slashed\nabla_b z - v g^{CD} \tilde\nabla_C z \tilde\nabla _D z  )  + \frac{ \delta a^2}{R_+^2} z^2 \right] \leqslant I_0 + I_1 + I_\omega.
\end{equation}
For \(I_{0,>}\), using \eqref{eql.obs_ext_a} and \eqref{eql.obs_ext_Udec} 
\begin{align*}
I_{0,>} \leqslant a M^2 R_+^3 R_-^{-2} \int_{\mf{U}_>} \zeta_{a,b;\varepsilon} z^2 \ll \frac{ \delta a^2}{R_+^2}\int_{\mf{U}_>} \zeta_{a,b;\varepsilon} z^2.
\end{align*}
Hence, this term can be absorbed into the LHS of \eqref{eq:obs_ext_2b}. For \( I_{1,>}\), using \eqref{eq.uvf_bound1},  \eqref{eql.obs_ext_a}, gives
\begin{align*}
I_{1,>} & \leqslant \frac{M^2 R_+^2}{a} \int_{\mf{U}_>}\zeta_{a,b;\varepsilon} ( - u |\partial_u z |^2 +  v |\partial_v z |^2 + v g^{ab}\slashed\nabla_a z \slashed\nabla_b z - v g^{CD} \tilde\nabla_C z \tilde\nabla _D z  ) \\
& \ll \frac{ \delta^2 R_-^2}{R_+^3} \int_{\mf{U}_>}\zeta_{a,b;\varepsilon} ( - u |\partial_u z |^2 +  v |\partial_v z |^2 + v g^{ab}\slashed\nabla_a z \slashed\nabla_b z - v g^{CD} \tilde\nabla_C z \tilde\nabla _D z  ),
\end{align*}
hence we can absorb this term into the LHS of \eqref{eq:obs_ext_2b}. Thus, we have
\begin{equation}\label{eq.shnk_dom}
C  \int_{\mf{U}_>}\zeta_{a,b;\varepsilon} \left[ \frac{ \delta^2 R_-^2}{R_+^3}( - u |\partial_u z |^2 +  v |\partial_v z |^2 + v g^{ab}\slashed\nabla_a z \slashed\nabla_b z - v g^{CD} \tilde\nabla_C z \tilde\nabla _D z  ) + \frac{ \delta a^2}{R_+^2}  z^2 \right] \leqslant I_{0, \leqslant} + I_{1,\leqslant} + I_\omega.
\end{equation}
We note that due to \eqref{eq.uv} and \eqref{eq.obs_ext_MR}
\begin{align}\label{eql.obs_ext_50}
r \geqslant R_-, \qquad -u \geqslant \frac{ 3R_- }{8}, \qquad v \geqslant \frac{3 R_- }{8},
\end{align}
for any \( \tau \in \R, \text{ with } |\tau| < \frac{1}{4} R_- \), where \(|\tau| = \sqrt{t_1^2 + t_2^2}\). Then due to \eqref{eq.carleman_weight} and \eqref{eql.obs_ext_50}, we have the following estimates for the same range of \(\tau\)
\[  \frac{ f }{ ( 1 + \varepsilon u ) ( 1 - \varepsilon v ) } \geq \frac{ R_-^2 }{ 16 } , \qquad \zeta_{ a, b; \varepsilon } \geq \left( \frac{ R_- }{4} e^\frac{ b R_- }{4} \right)^{ 4 a }.\]
This shows that
\[ \mf{U} \cap \left\{  |\tau| < \frac{ R_- }{4} \right\} \subseteq \mf{U}_> . \]
Taking this, along with an application of Fubini's theorem, shows that \eqref{eq.shnk_dom} reduces to
\begin{equation}
C \left( \frac{ R_- }{4} e^\frac{ b R_- }{4} \right)^{ 4 a } \iint_{\left\{  |\tau| < \frac{ R_- }{4} \right\} } \int_\Omega \left[ \frac{ \delta^2 R_-^3}{R_+^3} |\nabla_{t_1,t_2,x} z |^2 + \frac{ \delta a^2}{R_+^2}  z^2 \right] \leqslant I_{0, \leqslant} + I_{1,\leqslant} + I_\omega.
\end{equation}
Also, due to \eqref{eql.obs_ext_a} we get
\begin{equation}\label{eql.obs_ext_7}
 \frac{ C \delta^2 R_-^3}{R_+^3} \left( \frac{ R_- }{4} e^\frac{ b R_- }{4} \right)^{ 4 a } \iint_{\left\{  |\tau| < \frac{ R_- }{4} \right\} } \int_\Omega  ( |\nabla_{t_1,t_2,x} z |^2 + z^2)  \leqslant I_{0, \leqslant} + I_{1,\leqslant} + I_\omega.
\end{equation}
Note that due to \eqref{eq.carleman_weight}, \eqref{eq.uvf_bound1}, \eqref{eq.conf_comp}, \eqref{eql.obs_ext_b}, and \eqref{eql.obs_ext_Udec} the following bounds hold on \(\mf{U}_\leq\)
\[
\zeta_{ a, b; \varepsilon } |_{ \mf{U}_\leq } \leq \left( \frac{ R_- }{ 8 } e^\frac{ b R_- }{ 8 } \right)^{ 4 a } , \qquad \zeta_{ a, b; \varepsilon } f^{-1} |_{ \mf{U}_\leq } \leq \frac{64}{R_-^2} \left( \frac{ R_- }{ 8 } e^\frac{ b R_- }{ 8 } \right)^{ 4 a },  \qquad f |_{ \mf{U}_\leq } \leq R_-^2.
\]
Hence, we have
\begin{equation}\label{eql.obs_ext_71}
I_{0, \leqslant} + I_{1,\leqslant} \lesssim \frac{a M^2 R_+^3}{R_-^2} \left( \frac{ R_- }{ 8 } e^\frac{ b R_- }{ 8 } \right)^{ 4 a } \int_{\mf{U}_{\leqslant}} z^2  + \frac{M^2 R_+ R_-^2}{a} \left( \frac{ R_- }{ 8 } e^\frac{ b R_- }{ 8 } \right)^{ 4 a } \int_{\mf{U}_{\leqslant}} (z_{t_1}^2 + z_{t_2}^2).
\end{equation}
For \(I_\omega\), using \eqref{eql.obs_ext_b} we can estimate the coefficients as follows
\begin{align*}
\zeta_{a,b;\varepsilon} f^{-2} & \leqslant (4f)^{2a} f^{-2} \leqslant 4^{2a}f^{2a-2} \leqslant 2^{4a}R_+^{4a-4}, \\
\zeta_{a,b;\varepsilon} f^{-3} & \leqslant (4f)^{2a} f^{-3} \leqslant 4^{2a}f^{2a-3} \leqslant 2^{4a}R_+^{4a-6}.
\end{align*}
Then, we get the bound
\begin{align*}
I_\omega & \lesssim a^2 R_+^3 2^{4a}R_+^{4a-4} \int_{ ( (-T,T)^2 \times \omega )) \cap \mf{D}} ( z_{t_1}^2 + z_{t_2}^2 )  +  a^4 R_+^4 2^{4a}R_+^{4a-6} \int_{( (-T,T)^2 \times \omega ) \cap \mf{D}} z^2 \\
& \lesssim a^2 2^{4a}R_+^{4a-1} \int_{ ( (-T,T)^2 \times \omega )) \cap \mf{D}} ( z_{t_1}^2 + z_{t_2}^2 ) + a^4 2^{4a} R_+^{4a-2} \int_{ ( (-T,T)^2 \times \omega ) \cap \mf{D}} z^2 .
\end{align*}
Using this in \eqref{eql.obs_ext_7}, and also using \eqref{eql.obs_ext_71} shows
\begin{align*}
\numberthis \label{eql.obs_ext_9} \frac{ C \delta^2 R_-^3}{R_+^3} & \left( \frac{ R_- }{4} e^\frac{ b R_- }{4} \right)^{ 4 a } \iint_{\left\{  |\tau| < \frac{ R_- }{4} \right\} } \int_\Omega  ( |\nabla_{t_1,t_2,x} z |^2 + z^2) \\
& \leqslant  \frac{a^2 M^2 R_+^3}{R_-^2} \left( \frac{ R_- }{ 8 } e^\frac{ b R_- }{ 8 } \right)^{ 4 a } \int_{\mf{U}_{\leqslant}} z^2 + \frac{M^2 R_+ R_-^2}{a} \left( \frac{ R_- }{ 8 } e^\frac{ b R_- }{ 8 } \right)^{ 4 a } \int_{\mf{U}_{\leqslant}} (z_{t_1}^2 + z_{t_2}^2) \\
& \qquad + a^2 2^{4a}R_+^{4a-1} \int_{ ( (-T,T)^2 \times \omega )) \cap \mf{D}} ( z_{t_1}^2 + z_{t_2}^2 ) + a^4 2^{4a} R_+^{4a-2} \int_{ ( (-T,T)^2 \times \omega ) \cap \mf{D}} z^2.
\end{align*}

\subsubsection{Going back from \(z\) to \(\phi\).}\label{sssec_zphi}

Now we go back to the original (wave equation) function \( \phi \), using \eqref{eq.z_def}. For this purpose, note that due to \eqref{eq.z_deriv}
\begin{equation}
\int_{\mf{U}_{\leqslant}} ( z_{t_1}^2 + z_{t_2}^2 ) \leqslant \int_{\mf{U}\cap \mf{D}} ( z_{t_1}^2 + z_{t_2}^2 )  = \int_{\mf{U}\cap \mf{D}} |\phi(t_1,x)|^2 + |\phi(t_2,x)|^2.
\end{equation}
Let us look at the first term on the RHS
\begin{align*}
\int_{\mf{U}\cap \mf{D}} |\phi(t_1,x)|^2 & = \int_\Omega dx \iint_{t_1^2 + t_2^2 < r^2} dt_1 dt_2 |\phi(t_1,x)|^2 \\
& = \int_\Omega dx \iint_{t_1^2 < r^2 - t_2^2 } dt_1 dt_2 |\phi(t_1,x)|^2 \\
& \leqslant \int_\Omega dx \int_{t_1^2 < r^2 } dt_1 \int_{t_2}  dt_2 |\phi(t_1,x)|^2 \\
& \leqslant 2 R_+ \int_\Omega dx \int_{t_1^2 < r^2 } dt_1 |\phi(t_1,x)|^2\\
& = 2 R_+ \int_{\mc{U} \cap \mc{D} }|\phi(t,x)|^2.
\end{align*}
Similarly, we also get
\begin{equation}
\notag \int_{\mf{U}\cap \mf{D}} |\phi(t_2,x)|^2 \leqslant 2 R_+ \int_{\mc{U} \cap \mc{D} }|\phi(t,x)|^2.
\end{equation}
For the term containing \(z^2\) in \eqref{eql.obs_ext_9}, we have
\begin{align*}
\int_{\mf{U}_{\leqslant}} z^2 \leqslant \int_{\mf{U} \cap \mf{D}} z^2 & \leqslant \int_\Omega dx \iint_{t_1^2 + t_2^2 < r^2} dt_1 dt_2  \left( \int _{t_1}^{t_2} |\phi(s,x)| ds \right)^2 \\
& \leqslant \int_\Omega dx \iint_{t_1^2 + t_2^2 < r^2} dt_1 dt_2   \left( \int _{t_1}^{t_2} |\phi(s,x)|^2 ds \right)\\
& \leqslant \int_\Omega dx \iint_{t_1^2 + t_2^2 < r^2} dt_1 dt_2  \int_{-r}^r |\phi(s,x)|^2 ds \\
& \leqslant 4 R_+^2 \int_ \Omega dx \int_{-r}^r ds|\phi(s,x)|^2 \\
& \leqslant 4 R_+^2 \int_{\mc{U} \cap \mc{D}} |\phi(t,x)|^2.
\end{align*}
We use a similar calculation for estimating the \(\omega\)-integral term in the RHS of \eqref{eql.obs_ext_9} to get that
\begin{align*}
a^2 2^{4a}R_+^{4a-1} \int_{ ( (-T,T)^2 \times \omega )) \cap \mf{D}} ( z_{t_1}^2 + z_{t_2}^2 ) + a^4 2^{4a} & R_+^{4a-2} \int_{ ( (-T,T)^2 \times \omega ) \cap \mf{D}} z^2 \\
& \lesssim a^4 2^{4a} R_+^{4a} \int_{ ( (-T,T) \times \omega ) \cap \mc{D}} |\phi(t,x)|^2.
\end{align*}
Then using the above bounds in \eqref{eql.obs_ext_9}, shows that 
\begin{align}\label{eql.obs_ext_12}
\notag \frac{ C \delta^2 R_-^3}{R_+^3} & \left( \frac{ R_- }{4} e^\frac{ b R_- }{4} \right)^{ 4 a } \int_\Omega \iint_{ \sqrt{t_1^2+t_2^2} < \frac{ R_- }{4} } (|\phi(t_1,x)|^2 + |\phi(t_2,x)|^2 )\\
& \leqslant \frac{a^2 M^2 R_+^5}{R_-^2} \left( \frac{ R_- }{ 8 } e^\frac{ b R_- }{ 8 } \right)^{ 4 a } \int_{\mc{U} \cap \mc{D}} |\phi(t,x)|^2 +  \frac{M^2 R_+^2 R_-^2}{a} \left( \frac{ R_- }{ 8 } e^\frac{ b R_- }{ 8 } \right)^{ 4 a } \int_{\mc{U} \cap \mc{D}} |\phi(t,x)|^2 \\
\notag & \qquad + a^4 2^{4a} R_+^{4a} \int_{ ( (-T,T) \times \omega ) \cap \mc{D}} |\phi(t,x)|^2.
\end{align}
For the term in the LHS, if we only consider the first integrand, we see that
\begin{align}
\notag \int_\Omega  \iint_{\sqrt{t_1^2+t_2^2} \leqslant \frac{R_-}{4}} |\phi(t_1,x)|^2 & \geqslant \int_\Omega dx \int_{ \text{max}\{|t_1|,|t_2|\} \leqslant \frac{R_-}{64}} |\phi(t_1,x)|^2 \\
\notag & \geqslant \int_\Omega dx \int_{-\frac{R_-}{64}}^{\frac{R_-}{64}} dt_1 \int_{-\frac{R_-}{64}}^{\frac{R_-}{64}} dt_2 |\phi(t_1,x)|^2  \\
& \geqslant \frac{R_-}{32} \int_\Omega dx \int_{-\frac{R_-}{64}}^{\frac{R_-}{64}} dt_1 |\phi(t_1,x)|^2. \label{eql.obs_ext_12a}
\end{align}
Similarly, we also get
\begin{equation}
\int_\Omega  \iint_{\sqrt{t_1^2+t_2^2} \leqslant \frac{R_-}{4}} |\phi(t_2,x)|^2 \geqslant \frac{R_-}{32} \int_\Omega dx \int_{-\frac{R_-}{64}}^{\frac{R_-}{64}} dt_2 |\phi(t_2,x)|^2. \label{eql.obs_ext_12b}
\end{equation}
Substituting \eqref{eql.obs_ext_12a} and \eqref{eql.obs_ext_12b} in \eqref{eql.obs_ext_12} shows
\begin{align*}
\frac{ C \delta^2 R_-^4}{R_+^3} & \left( \frac{ R_- }{4} e^\frac{ b R_- }{4} \right)^{ 4 a } \int_\Omega dx \int_{-\frac{R_-}{64}}^{\frac{R_-}{64}} dt |\phi(t,x)|^2 \\
& \leqslant  \frac{a^2 M^2 R_+^5}{R_-^2} \left( \frac{ R_- }{ 8 } e^\frac{ b R_- }{ 8 } \right)^{ 4 a } \int_{\mc{U} \cap \mc{D}} |\phi(t,x)|^2 +  \frac{M^2 R_+^2 R_-^2}{a} \left( \frac{ R_- }{ 8 } e^\frac{ b R_- }{ 8 } \right)^{ 4 a } \int_{\mc{U} \cap \mc{D}} |\phi(t,x)|^2 \\
\notag & \qquad + a^4 2^{4a} R_+^{4a} \int_{ ( (-T,T) \times \omega ) \cap \mc{D}} |\phi(t,x)|^2.
\end{align*}
Due to the condition \eqref{eql.obs_ext_a}, the above estimate becomes
\begin{align}
\label{eql.obs_ext_14} \frac{ C \delta^2 R_-^4}{R_+^3}  \left( \frac{ R_- }{4} e^\frac{ b R_- }{4} \right)^{ 4 a } \int_\Omega dx \int_{-\frac{R_-}{64}}^{\frac{R_-}{64}} dt |\phi(t,x)|^2 & \leqslant \frac{a^2 M^2 R_+^5}{R_-^2} \left( \frac{ R_- }{ 8 } e^\frac{ b R_- }{ 8 } \right)^{ 4 a } \int_{\mc{U} \cap \mc{D}} |\phi(t,x)|^2\\
\notag & \qquad + a^4 2^{4a} R_+^{4a} \int_{ ( (-T,T) \times \omega ) \cap \mc{D}} |\phi(t,x)|^2.
\end{align}

\subsubsection{Application of energy estimate lemmas.}\label{sssec_obsenergy}
Next, we apply the energy estimate equation \eqref{eq.energy1} to \eqref{eql.obs_ext_14} to get
\begin{align*}
\frac{ C \delta^2 R_-^4}{R_+^3} & \left( \frac{ R_- }{4} e^\frac{ b R_- }{4} \right)^{ 4 a } \int_{-\frac{R_-}{100}}^{\frac{R_-}{100}} ( |\phi(t,\cdot)|^2_{L^2(\Omega)} + |\phi'(t,\cdot)|^2_{H^{-1}(\Omega)} ) dt \\
& \leqslant \frac{a^2 C_1 M^3 R_+^5}{R_-^2} \left( \frac{ R_- }{ 8 } e^\frac{ b R_- }{ 8 } \right)^{ 4 a } \int_{\mc{U} \cap \mc{D}} |\phi(t,x)|^2 + a^4 2^{4a} C_1 M R_+^{4a} \int_{ ( (-T,T) \times \omega ) \cap \mc{D}} |\phi(t,x)|^2.
\end{align*}
Using \eqref{eq.energy2} for the first term on the RHS, shows
\begin{align*}
\numberthis \label{eql.obs_ext_15} \frac{ C \delta^2 R_-^4}{R_+^3} \left( \frac{ R_- }{4} e^\frac{ b R_- }{4} \right)^{ 4 a }  \int_{-\frac{R_-}{100}}^{\frac{R_-}{100}} E(t) dt & \leqslant \frac{a^2 C_1 M^3 R_+^6}{R_-^2} \left( \frac{ R_- }{ 8 } e^\frac{ b R_- }{ 8 } \right)^{ 4 a } e^{C_2(M_0 + M_1) R_+} E(0) \\
& \qquad + a^4 2^{4a} C_1 M R_+^{4a} \int_{ ( (-T,T) \times \omega ) \cap \mc{D}} |\phi(t,x)|^2.
\end{align*}
Applying  \eqref{eq.energy2} to the LHS of the above estimate results in
\begin{align*}
\frac{ C \delta^2 R_-^4}{R_+^3} \left( \frac{ R_- }{4} e^\frac{ b R_- }{4} \right)^{ 4 a }  \int_{-\frac{R_-}{100}}^{\frac{R_-}{100}} E(t) dt & \geqslant \frac{ C \delta^2 R_-^4}{R_+^3} \left( \frac{ R_- }{4} e^\frac{ b R_- }{4} \right)^{ 4 a } \int_{-\frac{R_-}{100}}^{\frac{R_-}{100}} e^{- C_2 (M_0+M_1) R_-} E(0)\\
& \geqslant \frac{ C \delta^2 R_-^5}{R_+^3} \left( \frac{ R_- }{4} e^\frac{ b R_- }{4} \right)^{ 4 a }  e^{- C_2 (M_0+M_1) R_+} E(0).
\end{align*}
Then \eqref{eql.obs_ext_15} reduces to
\begin{align*}
\frac{ C \delta^2 R_-^5}{R_+^3} \left( \frac{ R_- }{4} e^\frac{ b R_- }{4} \right)^{ 4 a }  e^{- C_2 (M_0+M_1) R_+} E(0) & \leqslant \frac{a^2 C_1 M^3 R_+^6}{R_-^2} \left( \frac{ R_- }{ 8 } e^\frac{ b R_- }{ 8 } \right)^{ 4 a } e^{C_2(M_0 + M_1) R_+} E(0) \\
& \qquad + a^4 2^{4a} C_1 M R_+^{4a} \int_{ ( (-T,T) \times \omega ) \cap \mc{D}} |\phi(t,x)|^2.
\end{align*}
To prove the observability inequality \eqref{eq.obs_ext_est}, it is enough to show that the first term in the RHS can be absorbed into the LHS. For this purpose, we just have to show that
\begin{align*}
\frac{a^2 C_1 M^3 R_+^6}{R_-^2} \left( \frac{ R_- }{ 8 } e^\frac{ b R_- }{ 8 } \right)^{ 4 a } e^{ C_2(M_0 + M_1) R_+} \ll \frac{ C \delta^2 R_-^5}{R_+^3} \left( \frac{ R_- }{4} e^\frac{ b R_- }{4} \right)^{ 4 a }  e^{- C_2 (M_0+M_1) R_+},
\end{align*}
which is equivalent to showing
\begin{align*}
\numberthis \label{eql.obs_ext_16} \left[\frac{a^2 C_1 M^3 R_+^6}{R_-^2} \cdot \frac{R_+^3}{ C \delta^2 R_-^5}\right] \left( \frac{1}{2} \right)^{4a} \left( e^\frac{ - b R_- }{ 8 }\right)^{4a}  e^{2C_2(M_0 + M_1) R_+} \ll 1.
\end{align*}
To show the above, we see that due to \eqref{eql.obs_ext_a}
\begin{align*}
\frac{a^2 C_1 M^3 R_+^6}{R_-^2} \cdot \frac{R_+^3}{ C \delta^2 R_-^5} \ll \frac{a^3 M R_+^4}{R_-^5} \ll \frac{a^4 R_+^2}{R_-^2} \ll a^5 R_-, \qquad \left( e^\frac{ - b R_- }{ 8 }\right)^{4a} \ll 1, \qquad e^{2C_2(M_0 + M_1) R_+} \ll e^a,
\end{align*}
up to the constants \( C, C_1, C_2\). Thus, the LHS of \eqref{eql.obs_ext_16} satisfies
\begin{align*}
 \left[\frac{a^2 C_1 M^3 R_+^6}{R_-^2} \cdot \frac{R_+^3}{ C \delta^2 R_-^5}\right] \left( \frac{1}{2} \right)^{4a} \left( e^\frac{ - b R_- }{ 8 }\right)^{4a}  e^{2C_2(M_0 + M_1) R_+}  \ll a^5 R_- \left( \frac{1}{2} \right)^{4a}  e^a \ll 1,
\end{align*}
for large enough \(a\). This shows that \eqref{eql.obs_ext_16} is true. Then, applying Lemma \ref{thm.energy2} once more to go from \(E(0)\) to \(E(-T)\) completes the proof of \eqref{eq.obs_ext_est}, since the LHS of \eqref{eq.obs_ext_est} is just \(E(-T)\). \hfill \( \qed \)

\subsection{Interior Observability}\label{ssec_intobs}
In this section, we prove an observability estimate when the centre point for the Carleman estimate lies within the domain \(\mf{U}\).\\

\noindent \emph{\underline{Shifted coordinate system in \( \R^{2+n} \)}}: Before presenting the interior observability, we present a generalised version of Theorem \ref{thm.carl_int} where the centre point for the Carleman estimate is taken to be any point \( Q \in \R^{2+n}\). Note that, we have the assumption \( \boldsymbol{ m=2}\). We provide some basic definitions in the shifted coordinate system with respect to \( Q\).

\begin{remark}
We only present the shifted Carleman estimate result in the \( \R^{2+n} \) case. But, it can be analogously stated for the \( \R^{m+n}\) case as well. 
\end{remark}

\begin{definition}
Fix \( Q \in \R^{2+n}\).
\begin{itemize}

\item Define the shifted time and spatial coordinates as
\begin{equation}\label{eq.tx_shift}
t_Q := t - t(Q), \qquad x_Q := x- x(Q).
\end{equation}

\item Analogous to Definition \ref{def_setting}, we have
\begin{equation}
\numberthis \label{eq.ruvf_shift} r_Q := | x_Q | \text{,} \quad \tau_Q := | t_Q | \text{,} \qquad u_Q := \frac{1}{2} ( t_Q - r_Q ) \text{,} \qquad v_Q := \frac{1}{2} ( t_Q + r_Q ) \text{,} \qquad f_Q := - u_Q v_Q.
\end{equation}

\item And also that
\begin{align*}
\numberthis g &= - d(t_{Q,1})^2 - d(t_{Q,2})^2 + d(x_{Q,1})^2 + \cdots + d(x_{Q,n})^2 \\
& = -d\tau_Q^2 + dr_Q^2 + r_Q^2 \mathring{\gamma}_{\mathbb{S}^{n-1}}- \tau_Q^2 \mathring{\gamma}_{\mathbb{S}^1} \\
& = -4du_Q dv_Q + r_Q^2 \mathring{\gamma}_{\mathbb{S}^{n-1}} - \tau_Q^2 \mathring{\gamma}_{\mathbb{S}^1}.
\end{align*}

\item Let \( \mf{D}_Q, \) be the domain given by 
\begin{equation}
\mf{D}_Q := \{ f_Q > 0 \}.
\end{equation}

\item Let \( \partial_{\tau_Q}, \partial_{r_Q}, \partial_{u_Q}, \partial_{v_Q} \) denote the coordinate vector fields with respect to the shifted coordinate system. Also, \( \nasla{}^Q\)denotes derivatives in the spatial angular components, and \( \tilde\nabla^Q \)denotes derivatives in the temporal angular components.

\end{itemize}
\end{definition}
\noindent Then we have the following interior Carleman estimate in the shifted coordinate system.

\begin{theorem}\label{thm.carl_int_P}
Let \( \mf{U}\) be defined as follows
\[\mf{U} := \R^2 \times \Omega,
\]
and fix \( R>0 \) such that
\begin{equation}\label{eq.carl_int_domain1}
\Omega \subseteq \{ r_Q<R \} .
\end{equation}
Let \( \varepsilon,\ a,\ b >0 \), be constants, such that:
\begin{equation}\label{eq.carl_int_P_choices}
a \geqslant (n+2)^2, \qquad a \gg R, \qquad \varepsilon \ll_n b \ll R^{-1}.
\end{equation}
Let \( \Gamma_+^Q \) be defined as follows: 
\begin{equation}\label{eq.thm.carl_int_P_1}
 \Gamma_+^Q := \{\nu r_Q >0 \}.
\end{equation}
Fix \(\sigma >0 \) and define \( \omega \), a subset of \( \Omega\), as
\begin{equation}
    \omega_Q = \mathcal{O}_\sigma(\Gamma^Q_+) \cap \Omega. \label{eq_P_3}
\end{equation}
Then, there exists \(C>0\) such that for any \( z \in \mc{C}^2({\mf{U}})\cap \mc{C}^1(\bar{\mf{U}}) \) satisfying \(z|_{\partial\mf{U} \cap \mf{D}_Q } = 0,\) we have the following estimate
\begin{align*} 
\numberthis \label{eq.carl_int_P} C\varepsilon \int_{\mf{U} \cap \mf{D}_Q} & \zeta_{a,b;\varepsilon}^Q r_Q^{-1} (|u_Q\partial_{u_Q} z |^2 +  |v_Q\partial_{v_Q} z |^2 + f_Q g^{ab}\slashed\nabla^Q_a z \slashed\nabla^Q_b z - f_Q g^{CD} \tilde\nabla^Q_C z \tilde\nabla^Q_D z  ) \\
& + Cba^2\int_{\mf{U} \cap \mf{D}_Q} \zeta_{a,b;\varepsilon}^Q f_Q^{-\frac{1}{2}} z^2 \\
& \qquad \leqslant \frac{1}{a}\int_{\mf{U} \cap \mf{D}_Q} \zeta_{a,b;\varepsilon}^Q f_Q |\square z|^2  +a^2 R^3 \int_{( \R^2 \times \omega_Q ) \cap \mf{D}_Q } \zeta_{a,b;\varepsilon}^Q f_Q^{-2} ( z_{t_{Q,1}}^2 + z_{t_{Q,2}}^2 )\\
& \qquad \qquad + a^4 R^4 \int_{( \R^2 \times \omega_Q ) \cap \mf{D}_Q} \zeta_{a,b;\varepsilon}^Q f_Q^{-3} z^2,
\end{align*}
where the Carleman weight is given by
\begin{equation}
\label{eq.carleman_weight_P} \zeta_{ a, b; \varepsilon }^Q := \left\{ \frac{ f_Q }{ ( 1 + \varepsilon u_Q ) ( 1 - \varepsilon v_Q ) } \cdot \exp \left[ \frac{ 2 b f_Q^\frac{1}{2} }{ ( 1 - \varepsilon u_Q )^\frac{1}{2} ( 1 + \varepsilon v_Q )^\frac{1}{2} } \right] \right\}^{2a},
\end{equation}
\end{theorem}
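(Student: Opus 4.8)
The plan is to deduce Theorem~\ref{thm.carl_int_P} from Theorem~\ref{thm.carl_int} using the translation invariance of the flat metric $g$. Let $\Psi:\R^{2+n}\to\R^{2+n}$ be the translation $\Psi(t,x):=(t-t(Q),\,x-x(Q))=(t_Q,x_Q)$. Since $g$ in \eqref{eq.gmetric} is translation invariant, $\Psi$ is an isometry of $(\R^{2+n},g)$, and it carries every object centred at $Q$ onto the corresponding object centred at the origin: the shifted functions of \eqref{eq.ruvf_shift} satisfy $r_Q=r\circ\Psi$, $\tau_Q=\tau\circ\Psi$, $u_Q=u\circ\Psi$, $v_Q=v\circ\Psi$, $f_Q=f\circ\Psi$; the exterior region satisfies $\mf{D}_Q=\Psi^{-1}(\mf{D})$; the wave operator $\square$ commutes with precomposition by $\Psi$; the angular derivatives $\slashed\nabla^Q,\tilde\nabla^Q$ are the pullbacks of $\slashed\nabla,\tilde\nabla$; and the weight $\zeta^Q_{a,b;\varepsilon}$ of \eqref{eq.carleman_weight_P} is exactly $\zeta_{a,b;\varepsilon}\circ\Psi$, with $\zeta_{a,b;\varepsilon}$ as in \eqref{eq.carleman_weight}.

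Next I would check that the hypotheses of Theorem~\ref{thm.carl_int} transfer. Put $\Omega_Q:=\Omega-x(Q)\subset\R^n$; then $\Psi(\mf{U})=\R^2\times\Omega_Q$, because the time factor $\R^2$ is all of $\R^2$ and so is unchanged by the time-shift in $\Psi$ (in particular the value $t(Q)$ plays no role). Hypothesis \eqref{eq.carl_int_domain1}, namely $\Omega\subseteq\{r_Q<R\}$, is equivalent to $\Omega_Q\subseteq\{r<R\}$, which is \eqref{eq.carl_int_domain} for $\Omega_Q$; the constants $a,b,\varepsilon$ obey \eqref{eq.carl_int_P_choices}, which coincides with \eqref{eq.carl_int_choices}; the boundary $\partial(\R^2\times\Omega_Q)=\R^2\times\partial\Omega_Q$ is smooth with spacelike $g$-normal, hence timelike, a property preserved by the isometry $\Psi$; and $\Gamma_+^Q$ of \eqref{eq.thm.carl_int_P_1} and $\omega_Q$ of \eqref{eq_P_3} are carried by $\Psi$ onto the sets $\Gamma_+$ of \eqref{eq.thm.carl_int_1} and $\omega$ of \eqref{eq:3} attached to $\Omega_Q$.

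I would then set $\bar z:=z\circ\Psi^{-1}$, which lies in $\mc{C}^2\cap\mc{C}^1$ and vanishes on $\partial(\R^2\times\Omega_Q)\cap\mf{D}$ since $z$ vanishes on $\partial\mf{U}\cap\mf{D}_Q$, and apply Theorem~\ref{thm.carl_int} to the domain $\R^2\times\Omega_Q$, the constants $a,b,\varepsilon$, the region $\omega$ associated with $\Omega_Q$, and the function $\bar z$. Because $\Psi$ is an isometry it preserves the $g$-volume forms on $\mf{U}\cap\mf{D}_Q$ and on $\partial\mf{U}\cap\mf{D}_Q$, so pulling the resulting inequality \eqref{eq.carl_int} back through $\Psi$ produces no Jacobian factors; relabelling all quantities by their $Q$-centred counterparts ($r\mapsto r_Q$, $u\mapsto u_Q$, $v\mapsto v_Q$, $f\mapsto f_Q$, the time derivatives of $z$ accordingly, $\omega\mapsto\omega_Q$, and $\zeta_{a,b;\varepsilon}\mapsto\zeta^Q_{a,b;\varepsilon}$) then yields exactly \eqref{eq.carl_int_P}.

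There is no new analytic content here, so the only real difficulty is bookkeeping: one must confirm that the time-shift part of $\Psi$ acts trivially on the factor $\R^2$, and that the causal-geometric ingredients feeding Theorems~\ref{thm.carleman_est_wp}, \ref{thm.carl_bdry} and \ref{thm.carl_int} (the timelike boundary, the shape of $\mf{D}_Q$, and the polar/null form $g=-4\,du_Q\,dv_Q+r_Q^2\mathring{\gamma}_{\mathbb{S}^{n-1}}-\tau_Q^2\mathring{\gamma}_{\mathbb{S}^1}$) are the exact images under $\Psi$ of their unshifted versions. If one prefers to avoid invoking $\Psi$, an equivalent route is to repeat the proofs of Theorems~\ref{thm.carleman_est_wp}, \ref{thm.carl_bdry} and \ref{thm.carl_int} verbatim with every geometric quantity carrying the subscript $Q$; the translation argument merely makes this repetition unnecessary.
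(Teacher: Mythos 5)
Your proposal is correct and matches the paper's intent: the paper disposes of this theorem with the one-line remark that the proof is identical to that of Theorem~\ref{thm.carl_int} after passing to the shifted coordinate system, and your argument via the translation isometry $\Psi$ is precisely the clean formalization of that remark, with all the transfer checks (that the time-shift acts trivially on the $\R^2$ factor, that the timelike character of $\partial\mf{U}$ and the volume forms are preserved, and that $r,u,v,f,\zeta_{a,b;\varepsilon}$ pull back to their $Q$-subscripted versions) carried out correctly.
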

The proof of the above theorem is exactly the same as that of Theorem \ref{thm.carl_int}. We just have to use the shifted coordinate system, instead of the usual coordinate system.
\smallskip

\noindent \emph{\underline{Interior Observability result}}:
Analogous to defining the shifted coordinate system in \(\R^{2+n}\), we can define the shifted coordinate system in \(\R^{1+n}\) as well. Let \( P \in \R^{1+n} \). We will attach the letter \(P\) to the corresponding geometric quantities' symbols to denote them in the shifted coordinates. In particular,
\[ \mc{D}_P = \{ |x_P|^2 > |t_P|^2 \} \subset \R^{1+n}. \]

\begin{theorem} \label{thm.obs_int}
Let \( \Omega \subset \R^n \). Consider the setting of \eqref{eq.wave_obs}. Fix $P_1, P_2 \in \mc{U}$ with
\begin{equation}
\label{eq.obs_int_Pi} P_1 \neq P_2 \text{,} \qquad t ( P_1 ) = t ( P_2 ) := 0 \text{.}
\end{equation}
Also, assume $\mc{U} \cap ( \mc{D}_{ P_1 } \cup \mc{D}_{ P_2 } )$ is bounded. Further,
\begin{itemize}
\item Define the constants
\begin{align*}
\numberthis \label{eq.obs_int_MR} R_+ := \max_{ i = 1, 2 } \sup_{ \Omega } r_{ P_i } \text{,} &\qquad R_- := \frac{1}{2} | x ( P_2 ) - x ( P_1 ) | \\
M_0 := \sup_{ \mc{U} } | V | \text{,} &\qquad M_1 :=  \sup_{ \mc{U} } \left\{ \frac{| \mc{X}^{ t, x } |}{\sqrt{R_+}}, | \nabla_{t,x}  \mc{X}^{ t, x } | \right\}  \text{,} \\
 M &:= \max \{1, M_0, M_1\}  \text{.}
\end{align*}

\item Let \(\nu\) be the outward-pointing unit normal to \(\Omega\), and define, for $i \in \{ 1, 2 \}$,
\begin{equation}
\label{eq.obs_int_delta} \qquad \Gamma^i_+ := \{ \nu r_{ P_i } > 0 \} \text{.}
\end{equation}

\item Let \(\sigma>0\). For $i \in \{ 1, 2 \}$, define the interior subsets of \(\Omega\)
\begin{equation}\label{eq.obs_int_omega}
\omega_i = \mc{O}_\sigma(\Gamma^i_+) \cap \Omega.
\end{equation}

\end{itemize}
If \( T > R_+ \), then there exist constants \(C_1\) and \(C_2\), depending on $\mc{U}$, such that
\begin{equation}
\label{eq.obs_int_est} E(-T) = || \phi_0 ||_{L^2(\Omega)}^2 + || \phi_1 ||_{H^{-1}(\Omega)} \leqslant \frac{ C_1 a^4 2^{12a} M }{ \delta^2 R_+^2 } \left( \frac{ R_+}{R_-} \right)^{4a+5}  e^{ 2 C_2 M T } \sum_{ i = 1 }^2 \int_{ ( (-T,T) \times \omega_i ) \cap \mc{D}} |\phi(t,x)|^2,
\end{equation}
holds true for any solution $\phi \in C^2 ( \mc{U} ) \cap C^1 ( \bar{\mc{U}} )$ of \eqref{eq.wave_obs} that also satisfies $\phi |_{ \partial \mc{U} \cap ( \mc{D}_{ P_1 } \cup \mc{D}_{ P_2 } ) } = 0$.
\end{theorem}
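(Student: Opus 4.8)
The plan is to mimic the proof of Theorem \ref{thm.obs_ext}, but now applying the shifted interior Carleman estimate of Theorem \ref{thm.carl_int_P} about \emph{two} centre points. I would lift $P_1, P_2 \in \mc{U} \subset \R^{1+n}$ to the points $Q_1 := (0,0,x(P_1))$ and $Q_2 := (0,0,x(P_2))$ in $\R^{2+n}$, so that $t(Q_i) = 0$ and the shifted regions $\mf{D}_{Q_i} = \{ r_{Q_i}^2 > t_1^2 + t_2^2 \}$ project onto $\mc{D}_{P_i}$. As in Section \ref{ssec_extobs}, set $z(t_1,t_2,x) := \int_{t_1}^{t_2}\phi(s,x)\,ds$; then $z$ satisfies the ultrahyperbolic equation \eqref{eq.z_wave}, has the required regularity, and satisfies the Dirichlet condition $z|_{\partial\mf{U}\cap\mf{D}_{Q_i}} = 0$ for each $i$ (this is where the hypothesis $\phi|_{\partial\mc{U}\cap(\mc{D}_{P_1}\cup\mc{D}_{P_2})} = 0$ is used), while $z_{t_1}, z_{t_2}$ are pointwise $\mp\phi$ by \eqref{eq.z_deriv}. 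I would choose $a, b, \varepsilon$ exactly as in \eqref{eql.obs_ext_a}--\eqref{eql.obs_ext_b}, with $R_\pm$ and $M$ now given by \eqref{eq.obs_int_MR}, after checking that these choices simultaneously fit the hypotheses of Theorem \ref{thm.carl_int_P} about both $Q_1$ and $Q_2$.

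Next, for each $i \in \{1,2\}$ I would run the argument of Section \ref{ssec_extobs} with every geometric object replaced by its $Q_i$-shifted counterpart: bound the bulk term $\int \zeta^{Q_i}_{a,b;\varepsilon} f_{Q_i}|\square z|^2$ via \eqref{eq.z_wave} and the integration-by-parts reductions \eqref{eq.z_Vz}--\eqref{eq.z_X} to get a multiple of $M^2 R_+ \int \zeta^{Q_i}_{a,b;\varepsilon} f_{Q_i}(z_{t_1}^2 + z_{t_2}^2 + |\nabla_x z|^2)$; absorb the spatial-derivative term by the energy-type integration by parts that produced \eqref{eql.obs_ext_1a}; and split the domain into $\mf{U}^{Q_i}_{\le}$ and $\mf{U}^{Q_i}_{>}$ as in \eqref{eql.obs_ext_Udec}, absorbing the good-sign contributions on $\mf{U}^{Q_i}_{>}$. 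This yields, for each $i$, an inequality of the shape \eqref{eq.shnk_dom}: a weighted $H^1$-norm of $z$ over $\mf{U}^{Q_i}_{>}$ is controlled by $I^{Q_i}_{0,\le} + I^{Q_i}_{1,\le} + I^{Q_i}_\omega$, where the last term involves only data on $(\R^2\times\omega_i)\cap\mf{D}_{Q_i}$ since $\mc{O}_\sigma(\Gamma^{Q_i}_+)\cap\Omega = \omega_i$.

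The new ingredient is the covering argument that combines the two estimates. Because $R_- = \tfrac12|x(P_2)-x(P_1)|$, the triangle inequality gives $\max_i r_{Q_i}(x) \ge R_-$ for every $x\in\Omega$, hence $\mf{U} \cap \{|\tau| < \tfrac14 R_-\} \subseteq \mf{U}^{Q_1}_{>}\cup\mf{U}^{Q_2}_{>}$, with the same quantitative weight lower bounds $\zeta^{Q_i}_{a,b;\varepsilon} \ge (\tfrac14 R_- e^{bR_-/4})^{4a}$ on that slab as in \eqref{eql.obs_ext_50}. Adding the two inequalities and invoking this covering, exactly as in the passage leading to \eqref{eql.obs_ext_7}, bounds $\tfrac{C\delta^2 R_-^3}{R_+^3}(\tfrac14 R_- e^{bR_-/4})^{4a}\iint_{|\tau|<R_-/4}\int_\Omega(|\nabla_{t_1,t_2,x}z|^2 + z^2)$ from above by $I^{Q_1}_{0,\le}+I^{Q_1}_{1,\le}+I^{Q_1}_\omega+I^{Q_2}_{0,\le}+I^{Q_2}_{1,\le}+I^{Q_2}_\omega$, the $\mf{U}^{Q_i}_\le$-terms being controlled by the weight bounds on $\mf{U}^{Q_i}_\le$ as in \eqref{eql.obs_ext_71}. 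I would then convert back from $z$ to $\phi$ as in Section \ref{sssec_zphi} (using $z_{t_j} = \pm\phi$ and the crude bound $z^2 \le 4R_+^2\int|\phi|^2$), localise the time integral as in \eqref{eql.obs_ext_12a}, apply the energy Lemmas \ref{thm.energy1} and \ref{thm.energy2} to pass to $E(0)$ and then $E(-T)$, and finally absorb the $\int_{\mc{U}\cap(\mc{D}_{P_1}\cup\mc{D}_{P_2})}|\phi|^2$ term into the left-hand side using the largeness of $a$ in \eqref{eql.obs_ext_a}, exactly as in \eqref{eql.obs_ext_16}. The two surviving $\omega_i$-observation integrals assemble into the sum in \eqref{eq.obs_int_est}.

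The main obstacle is bookkeeping rather than a conceptual difficulty: one must check that the single choice of $a,b,\varepsilon$ makes \emph{all} absorptions work for \emph{both} centre points with $R_-$ now reinterpreted as half the separation of $P_1,P_2$ (not the distance to $\partial\Omega$), and that the covering $\mf{U}\cap\{|\tau|<\tfrac14 R_-\}\subseteq\mf{U}^{Q_1}_{>}\cup\mf{U}^{Q_2}_{>}$ genuinely holds with the stated weight bounds. This two-point construction is essential: a single interior centre point $P$ leaves a neighbourhood of $P$ itself outside $\mc{D}_P$, so no slab $\{|\tau|<c\}\times\Omega$ is covered by one cone's exterior, and the absorption step would fail.
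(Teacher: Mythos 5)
Your proposal is correct and takes essentially the same approach as the paper: lift $P_1,P_2$ to $Q_i = (0,0,x(P_i))$, apply the shifted interior Carleman estimate of Theorem~\ref{thm.carl_int_P} about each $Q_i$ to the same $z$, and combine via the covering that the triangle inequality and $R_- := \tfrac12|x(P_2)-x(P_1)|$ provide. The paper makes your covering step explicit by defining $\mc{V}^i := \Omega\cap\{r_{P_i}>\tfrac{3R_-}{4}\}$ and $\mc{W}^i := \mc{V}^i\times\{|\tau|\le R_-/4\}$, showing $\mc{W}^i\subseteq\mf{U}^i_{>}$ with the weight bound $\zeta^{P_i}_{a,b;\varepsilon}\ge(\tfrac{R_-}{4}e^{bR_-/4})^{4a}$ there, and then using $\mc{V}^1\cup\mc{V}^2=\Omega$, which is precisely the content of your sentence ``$\max_i r_{Q_i}(x)\ge R_-$ for every $x\in\Omega$, hence the slab is covered.''
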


\begin{remark}
From the statement of Theorem \ref{thm.obs_int}, it is clear that we now need to apply the Carleman estimate around two observation points, \(P_1, P_2\). This is due to the fact that the weights in the estimate \eqref{eq.carl_int_P} vanish at the observation point, which is now inside the domain. Hence, we must consider the estimate for two interior points and add them to get the contribution from the whole domain.
\end{remark}

\noindent\textbf{Proof of Theorem \ref{thm.obs_int}.}
The proof of Theorem \ref{thm.obs_int} is similar to that of Theorem \ref{thm.obs_ext}. Hence, we omit most of the details here and provide an outline of the proof, only giving details where necessary. 

\subsubsection{Defining a new function z}
Similar to the case of the exterior observability, here we define the function \(z\) to which we apply the Carleman estimate. We define \(z\) as in \eqref{eq.z_def} and it satisfies the same wave equation \eqref{eq.z_wave}.

\subsubsection{Application of Carleman estimate}
Fix $i \in \{ 1, 2 \}$. Let \( Q_i = (0,P_i) \). Then, due to \eqref{eq.obs_int_MR} we get
\begin{equation}
\label{eql.obs_int_Ui} \mf{U}^i := \mf{U} \cap \mf{D}_{ Q_i } \subseteq \mf{D}_{ Q_i } \cap \{ r_{ Q_i } < R_+ \} \text{.}
\end{equation}
Let \(\delta \ll 1\), and choose \(a \geq (n+2)^2\) large enough satisfying
\begin{equation}\label{eql.obs_int_a}
a \gg \delta^{-2} R_-^{-2} M^2R_+^5, \quad a \gg R_-^{-3}M R_+^2, \quad a \gg M^2 R_+^3, \quad a \gg \delta^{-1} M^2 R_+^5.
\end{equation}
Also, choose \(\varepsilon \) and \(b\) such that
\begin{equation}\label{eql.obs_int_b}
\varepsilon:= \delta^2 R_+^{-1}, \qquad b:= \delta R_+^{-1}.
\end{equation}
The previous assumptions in this section ensure that the hypotheses of Theorem \ref{thm.carl_int_P} are satisfied. Applying \eqref{eq.carl_int_P} to \(\mf{U}\) and \(Q_i\), with the above \(a, b, \varepsilon\), and also using  \eqref{eq.uvf_bound1} and \eqref{eql.obs_int_b}, gives us 
\begin{align*}
\numberthis \label{eq.carl_int_z} \frac{ C \delta^2 }{R_+^2} & \int_{\mf{U}\cap \mf{D}_{Q_i}}  \zeta_{a,b;\varepsilon}^{Q_i} (|u_{Q_i} \partial_{u_{Q_i}} z |^2 +  |v_{Q_i} \partial_{v_{Q_i}} z |^2 + f_{Q_i} g^{ab}\slashed\nabla^{Q_i}_a z \slashed\nabla^{Q_i}_b z - f_{Q_i} g^{CD} \tilde\nabla^{Q_i}_C z \tilde\nabla^{Q_i}_D z  ) \\ 
& \quad + \frac{ C \delta a^2}{R_+^2} \int_{\mf{U}\cap \mf{D}_{Q_i}}\zeta_{a,b;\varepsilon}^{Q_i} z^2 \\
& \qquad \qquad\leqslant \frac{1}{a}\int_{\mf{U} \cap \mf{D}_{Q_i}} \zeta_{a,b;\varepsilon}^{Q_i} f_{Q_i} |\square z|^2 +  a^2 R_+^3 \int_{( (-T,T)^2 \times \omega_i ) \cap \mf{D}_{Q_i} } \zeta_{a,b;\varepsilon}^{Q_i} f_{Q_i}^{-2} ( z_{t_1}^2 + z_{t_2}^2 )  \\
& \qquad \qquad \qquad + a^4 R_+^4\int_{( (-T,T)^2 \times \omega_i ) \cap \mf{D}_{Q_i}} \zeta_{a,b;\varepsilon}^{Q_i}  f_{Q_i}^{-3}  z^2.
\end{align*}
Now we use an argument similar to the one used to go from \eqref{eq.carl_square_z} to \eqref{eq:obs_ext_2}, to conclude that
\begin{align*}
\frac{ C \delta^2 }{R_+^2} \int_{\mf{U}\cap \mf{D}_{Q_i}} \zeta_{a,b;\varepsilon}^{Q_i} (|u_{Q_i} \partial_{u_{Q_i}} z |^2 & +  |v_{Q_i} \partial_{v_{Q_i}} z |^2 + f_{Q_i} g^{ab} \slashed\nabla^{Q_i}_a z \slashed\nabla^{Q_i}_b z - f_{Q_i} g^{CD} \tilde\nabla^{Q_i}_C z \tilde\nabla^{Q_i}_D z  ) \\ 
+ \frac{ C \delta a^2}{R_+^2} \int_{\mf{U}\cap \mf{D}_{Q_i}} \zeta_{a,b;\varepsilon}^{Q_i} z^2 & \leqslant  \frac{M^2 R_+}{a} \int_{\mf{U}\cap \mf{D}_{Q_i}} \zeta_{a,b;\varepsilon}^{Q_i} f_{Q_i} (z_{t_1}^2 + z_{t_2}^2 ) + a M^2 R_+^3 \int_{\mf{U}\cap \mf{D}_{Q_i}} \zeta_{a,b;\varepsilon}^{Q_i} f_{Q_i}^{-1} z^2 \numberthis \label{eq:obs_int_2} \\
 & \qquad  + a^2 R_+^3 \int_{( (-T,T)^2 \times \omega_i ) \cap \mf{D}_{Q_i} } \zeta_{a,b;\varepsilon}^{Q_i} f^{-2} ( z_{t_1}^2 + z_{t_2}^2 ) \\
& \qquad + a^4 R_+^4 \int_{( (-T,T)^2 \times \omega_i ) \cap \mf{D}_{Q_i}} \zeta_{a,b;\varepsilon}^{Q_i} f_{Q_i}^{-3} z^2. 
\end{align*}
We use the notations \( I_1^i, I_0^i \) to denote the first and second terms on the RHS of the above estimate and \( I_\omega^i \) to denote the terms with integral over \(\omega_i\). Partitioning \(\mf{U}_i\) into
\begin{align}
\label{eql.obs_int_Udec} \mf{U}^i_\leq &:= \mf{U}^i \cap \left\{ \frac{ f_{Q_i} }{ ( 1 + \varepsilon u_{Q_i} ) ( 1 - \varepsilon v_{Q_i} ) } \leq \frac{ R_-^2 }{ 64 } \right\} \text{,} \\
\notag \mf{U}^i_> &:= \mf{U}^i \cap \left\{ \frac{ f_{Q_i} }{ ( 1 + \varepsilon u_{Q_i} ) ( 1 - \varepsilon v_{Q_i} ) } > \frac{ R_-^2 }{ 64 } \right\} \text{,} 
\end{align}
and denoting by \( I^i_{0, \leqslant}, I^i_{1,\leqslant} \) and \( I^i_{0, >}, I^i_{1, >} \) the parts of \(I^i_0, I^i_1 \) over the regions \( \mf{U}^i_\leq \) and \( \mf{U}^i_> \) respectively, shows that 
\begin{align*}
C \int_{\mf{U}^i_>} \zeta_{a,b;\varepsilon}^{Q_i} \bigg[ \frac{ \delta^2 R_-^2}{R_+^3}( - u_{Q_i} |\partial_{u_{Q_i}} z |^2 + v_{Q_i} |\partial_{v_{Q_i}} z |^2 + & v_{Q_i} g^{ab}\slashed\nabla^{Q_i}_a z \slashed\nabla^{Q_i}_b z - v_{Q_i} g^{CD} \tilde\nabla^{Q_i}_C z \tilde\nabla^{Q_i}_D z  ) + \frac{ \delta a^2}{R_+^2}  z^2 \bigg] \\
\numberthis \label{eq.shnk_int_dom} & \leqslant I^i_{0, \leqslant} + I^i_{1,\leqslant} + I^i_\omega.
\end{align*}
Now, on the region \( |\tau| \leqslant \frac{R_-}{4} \), we use the notation
\begin{align*}
\mc{V}^i := \Omega \cap \left\{ r_{ P_i } > \frac{ 3 R_- }{4} \right\} \text{,} \qquad \mc{W}^i := \mc{V}^i \times \left\{ (t_1,t_2): |\tau| \leqslant \frac{R_-}{4} \right\} \text{.}
\end{align*}
Then, due to \eqref{eq.ruvf_shift} and \eqref{eq.carleman_weight_P}, we have the estimates
\begin{align}
\label{eql.obs_int_50} - u_{ P_i } |_{ \mc{W}^i } \geq \frac{ R_- }{4} \text{,} &\qquad v_{ P_i } |_{\mc{W}^i} \geq \frac{ R_- }{4} \text{,} \\
\notag \frac{ f_{ P_i } }{ ( 1 + \varepsilon u_{ P_i } ) ( 1 - \varepsilon v_{ P_i } ) } \bigg|_{ \mc{W}^i } \geq \frac{ R_-^2 }{ 16 } \text{,} &\qquad \zeta_{ a, b; \varepsilon }^{ P_i } \big|_{ \mc{W}^i } \geq \left( \frac{ R_- }{4} e^\frac{ b R_- }{4} \right)^{ 4a } \text{.} 
\end{align}
Hence, we get
\begin{align*}
\mc{W}^i \subseteq \mf{U}^i_> \text{.}
\end{align*}
Similar to the proof in the exterior observability case, we get
\begin{equation}\label{eql.obs_int_7}
 \frac{ C \delta^2 R_-^3}{R_+^3} \left( \frac{ R_- }{4} e^\frac{ b R_- }{4} \right)^{ 4 a } \iint_{\left\{  |\tau | < \frac{ R_- }{4} \right\} } \int_{\mc{V}^i}  ( |\nabla_{t_1,t_2,x} z |^2 + z^2)  \leqslant I^i_{0, \leqslant} + I^i_{1,\leqslant} + I^i_\omega.
\end{equation}
Next, due to \eqref{eq.obs_int_MR}, we get
\begin{align*}
\mc{V}^1 \cup \mc{V}^2 = \Omega \text{,}
\end{align*}
and then summing over \(i\) shows that
\begin{equation}
 \frac{ C \delta^2 R_-^3}{R_+^3} \left( \frac{ R_- }{4} e^\frac{ b R_- }{4} \right)^{ 4 a } \iint_{\left\{  |\tau| < \frac{ R_- }{4} \right\} } \int_\Omega  ( |\nabla_{t_1,t_2,x} z |^2 + z^2)  \leqslant \sum_{i=1}^2 (I^i_{0, \leqslant} + I^i_{1,\leqslant} + I^i_\omega).
\end{equation}
Then proceeding in the same way as we did in the proof of Theorem \ref{thm.obs_ext}, we find estimates for the weights inside the integral to take them outside of the integral, and get that
\begin{align*}
 \label{eql.obs_int_9}\numberthis & \frac{ C \delta^2 R_-^3}{R_+^3} \left( \frac{ R_- }{4} e^\frac{ b R_- }{4} \right)^{ 4 a } \iint_{\left\{  |\tau| < \frac{ R_- }{4} \right\} } \int_\Omega  ( |\nabla_{t_1,t_2,x} z |^2 + z^2) \\
& \qquad\leqslant \sum_{i=1}^2 \bigg[ \frac{a^2 M^2 R_+^3}{R_-^2} \left( \frac{ R_- }{ 8 } e^\frac{ b R_- }{ 8 } \right)^{ 4 a } \int_{\mf{U}^i_{\leqslant}} z^2 + \frac{M^2 R_+ R_-^2}{a} \left( \frac{ R_- }{ 8 } e^\frac{ b R_- }{ 8 } \right)^{ 4 a } \int_{\mf{U}^i_{\leqslant}} (z_{t_1}^2 + z_{t_2}^2) \\
& \qquad \qquad \ \ + a^2 2^{4a} R_+^{4a-1} \int_{ ( (-T,T)^2 \times \omega^i ) \cap \mf{D}_{Q_i}} ( z_{t_1}^2 +  z_{t_2}^2 ) + a^4 2^{4a} R_+^{4a-2} \int_{ ( (-T,T)^2 \times \omega^i ) \cap \mf{D}_{Q_i}} z^2 \bigg].
\end{align*}

\subsubsection{Going from \(z\) to \(\phi\).}
We use the same derivation as done in Section \ref{sssec_zphi}, to go back to the original function \(\phi\) and use the fact that \( Q_i = (0, P_i) \), to obtain
\begin{align}
\notag \frac{ C \delta^2 R_-^4}{R_+^3}  \left( \frac{ R_- }{4} e^\frac{ b R_- }{4} \right)^{ 4 a } \int_\Omega dx \int_{- \frac{R_-}{64}}^{\frac{R_-}{64}} dt |\phi(t,x)|^2 & \leqslant \sum_{i=1}^2 \bigg[ \frac{a^2 M^2 R_+^5}{R_-^2} \left( \frac{ R_- }{ 8 } e^\frac{ b R_- }{ 8 } \right)^{ 4 a } \int_{\mc{U} \cap \mc{D}_{P_i}} |\phi(t,x)|^2\\
\label{eql.obs_int_14} &  \qquad + a^4 2^{4a} R_+^{4a} \int_{ ( (-T,T) \times \omega^i ) \cap \mc{D}_{P_i}} |\phi(t,x)|^2 \bigg].
\end{align}

\subsubsection{Application of energy estimate lemmas}
The term on the LHS is the same as we had for the exterior observability and will be dealt with exactly the same way as before. Since we want to prove \eqref{eq.obs_int_est}, the only term that we need to consider is the first term in the RHS of \eqref{eql.obs_int_14}.  For this term, using an analogous version of \eqref{eq.energy2} (with the choice of constants from \eqref{eq.obs_int_MR}), shows
\begin{align*}
\frac{a^2 M^2 R_+^5}{R_-^2} \left( \frac{ R_- }{ 8 } e^\frac{ b R_- }{ 8 } \right)^{ 4 a } \sum_{i=1}^2 \int_{\mc{U} \cap \mc{D}_{P_i}} |\phi(t,x)|^2 \leqslant \frac{a^2 C_1 M^3 R_+^6}{R_-^2} \left( \frac{ R_- }{ 8 } e^\frac{ b R_- }{ 8 } \right)^{ 4 a } e^{C_2(M_0 + M_1) R_+} E(0). 
\end{align*}
The rest of the proof proceeds similar to Section \ref{sssec_obsenergy}, that is we absorb the constants analogous to \eqref{eql.obs_ext_16}, and conclude that 
\begin{align*}
E(0) \leqslant C_1 a^4 2^{4a} M R_+^{4a} \sum_{i=1}^2 \int_{ ( (-T,T) \times \omega^i ) \cap \mc{D}_{P_i}} |\phi(t,x)|^2,
\end{align*}
which after another application of Lemma \ref{thm.energy2} (to go from \(E(0)\) to \(E(-T)\)) completes the proof of the interior observability estimate \eqref{eq.obs_int_est}. \hfill \(\qed\)

\medskip

\subsection{ Conclusion }\label{ssec_proofofobsthm} Now we are all set to present the proof of the main observability result, that is, Theorem \ref{intro_thm_obs}.

\begin{proof}[Proof of Theorem \ref{intro_thm_obs}]
We assume, without loss of generality, that the observation point is the origin, that is, let \( x_0 =0 \).
To prove this theorem, we divide the proof into different cases. Each case depends on the location of the origin with respect to the domain.

For the first case, let \( 0 \notin \bar{\Omega}\). Then, using the exterior observability result Theorem \ref{thm.obs_ext} directly gives us \eqref{eq.intro_thm_obs}.

For the second case, we first prove the result if \( 0 \in \Omega\). We take two distinct points \( x_1, x_2 \in \Omega\), and let \( P_i = (0, x_i)\) for \( i \in \{1,2\}\). Now, we can choose \(P_1, P_2\) sufficiently close enough to each other as well as to the origin, such that the observation regions from \eqref{eq.obs_int_omega}-\eqref{eq.obs_int_est} satisfy
\[ \left\{((-T,T) \times \omega_1 ) \cap \mc{D} \right\} \cup \left\{((-T,T) \times \omega_2 ) \cap \mc{D}\right\} \subset W. \]
Then, applying the interior observability result Theorem \ref{thm.obs_int} to this choice shows that \eqref{eq.intro_thm_obs2} holds for this case.

Finally, let \( 0 \in \partial \Omega \). We choose a point \(x_1 \notin \bar\Omega\) that is close enough to \( 0 \) such that the observation region, corresponding to \(x_1\), obtained from \eqref{eq.obs_ext_omega}-\eqref{eq.obs_ext_est} is a subset of \( W \). Then applying Theorem \ref{thm.obs_ext} to this choice shows that \eqref{eq.intro_thm_obs2} is true, and this concludes the proof of the theorem.
\end{proof}

\end{document}